\newcommand{\defi}[1]{{\upshape\sffamily #1}}
\renewcommand{\a}{\alpha}
\renewcommand{\b}{\beta}
\renewcommand{\d}{\delta}
\newcommand{\E}{\mathcal{E}}
\renewcommand{\det}{\textrm{det}}
\renewcommand{\ll}{\lambda}
\newcommand{\onto}{\twoheadrightarrow}
\newcommand{\oo}{\otimes}
\renewcommand{\P}{\mathcal{P}}
\renewcommand{\t}{\underline{t}}
\let\uuu\u 
\renewcommand{\u}{\underline{u}}
\renewcommand{\v}{\underline{v}}
\newcommand{\x}{\underline{x}}
\newcommand{\y}{\underline{y}}
\newcommand{\z}{\underline{z}}
\newcommand{\T}{\mathcal{T}}
\newcommand{\X}{\mathcal{X}}
\newcommand{\Y}{\mathcal{Y}}
\newcommand{\YU}{\mathcal{YU}}
\newcommand{\Z}{\mathcal{Z}}
\newcommand{\Ann}{\operatorname{Ann}}
\newcommand{\Ext}{\operatorname{Ext}}
\newcommand{\GL}{\operatorname{GL}}
\newcommand{\Sing}{\operatorname{Sing}}
\newcommand{\Sym}{\operatorname{Sym}}
\newcommand{\codim}{\operatorname{codim}}
\newcommand{\coker}{\operatorname{coker}}
\renewcommand{\det}{\operatorname{det}}
\newcommand{\dom}{\operatorname{dom}}
\renewcommand{\ker}{\operatorname{ker}}
\newcommand{\reg}{\operatorname{reg}}
\newcommand{\bb}[1]{\mathbb{#1}}
\renewcommand{\rm}[1]{\textrm{#1}}
\newcommand{\mc}[1]{\mathcal{#1}}
\newcommand{\mf}[1]{\mathfrak{#1}}
\newcommand{\ul}[1]{\underline{#1}}
\def\lra{\longrightarrow}
\def\llra{\longleftrightarrow}
\newcommand*{\lhra}{\ensuremath{\lhook\joinrel\relbar\joinrel\rightarrow}}
\newtheorem{theorem}{Theorem}[section]
\newtheorem*{theorem*}{Theorem}
\newtheorem*{problem*}{Problem}
\newtheorem{lemma}[theorem]{Lemma}
\newtheorem{proposition}[theorem]{Proposition}
\newtheorem{corollary}[theorem]{Corollary}
\newtheorem*{corollary*}{Corollary}
\newtheorem*{main-thm*}{Main Theorem}
\newtheorem*{linear-resolutions*}{Theorem on Linear Resolutions}
\newtheorem*{regularity-powers*}{Theorem on Regularity}
\newtheorem*{injectivity-Ext*}{Theorem on Injectivity of Maps of Ext Modules}
\newtheorem*{Kodaira*}{Kodaira Vanishing for Determinantal Thickenings}
\theoremstyle{definition}
\newtheorem{definition}[theorem]{Definition}
\newtheorem*{definition*}{Definition}
\newtheorem{example}[theorem]{Example}
\theoremstyle{remark}
\newtheorem{remark}[theorem]{Remark}
\newtheorem*{remark*}{Remark}
\numberwithin{equation}{section}
\begin{document}

\title{Regularity and cohomology of determinantal thickenings}

\author{Claudiu Raicu}
\address{Department of Mathematics, University of Notre Dame, 255 Hurley, Notre Dame, IN 46556\newline
\indent Institute of Mathematics ``Simion Stoilow'' of the Romanian Academy}
\email{craicu@nd.edu}

\subjclass[2010]{Primary 13D07, 14M12, 13D45}

\date{\today}

\keywords{Determinantal varieties, regularity, thickenings}

\begin{abstract} We consider the ring $S=\bb{C}[x_{ij}]$ of polynomial functions on the vector space $\bb{C}^{m\times n}$ of complex $m\times n$ matrices. We let $\GL=\GL_m(\bb{C})\times\GL_n(\bb{C})$ and consider its action via row and column operations on $\bb{C}^{m\times n}$ (and the induced action on $S$). For every $\GL$-invariant ideal $I\subseteq S$ and every $j\geq 0$, we describe the decomposition of the modules $\Ext^j_S(S/I,S)$ into irreducible $\GL$-representations. For any inclusion $I\supseteq J$ of $\GL$-invariant ideals we determine the kernels and cokernels of the induced maps $\Ext^j_S(S/I,S)\lra\Ext^j_S(S/J,S)$. As a consequence of our work, we give a formula for the regularity of the powers and symbolic powers of generic determinantal ideals, and in particular we determine which powers have a linear minimal free resolution. As another consequence, we characterize the $\GL$-invariant ideals $I\subseteq S$ for which the induced maps $\Ext^j_S(S/I,S) \lra H_I^j(S)$ are injective. In a different direction we verify that Kodaira vanishing, as described in work of Bhatt--Blickle--Lyubeznik--Singh--Zhang, holds for determinantal thickenings.
\end{abstract}

\dedicatory{Dedicated to David Eisenbud, on the occasion of his 70th birthday}

\maketitle

\section{Introduction}\label{sec:intro}

We consider positive integers $m\geq n$, let $X=(x_{ij})$ denote the generic $m\times n$ matrix, and let $S=\bb{C}[x_{ij}]$ denote the ring of polynomial functions on the space of complex $m\times n$ matrices. The ideal $I_p$ of $p\times p$ minors of $X$ defines the (projective) determinantal variety of matrices of rank smaller than $p$, while its powers $I_p^d$ define for $d>1$ \defi{thickenings} (non-reduced scheme structures) of the said variety. This article is concerned with the calculation of the modules $\Ext^{\bullet}_S(S/I,S)$ when $I$ defines an arbitrary equivariant thickening of a determinantal variety (where equivariance is considered with respect to the natural group action by $\GL=\GL_m(\bb{C})\times\GL_n(\bb{C})$). When $I\supseteq J$ define $\GL$-equivariant determinantal thickenings, we determine the kernel and cokernel of the induced map $\Ext^{\bullet}_S(S/I,S)\to\Ext^{\bullet}_S(S/J,S)$: the study of these maps was motivated by \cite{BBLSZ}. Translated via graded local duality, our results describe the cohomology groups $H^k(Y,\mc{O}_Y(j))$ for an arbitrary equivariant thickening $Y$ of a determinantal variety, and in particular they describe the Castelnuovo--Mumford regularity of such thickenings.

We begin with the following characterization of when powers of generic determinantal ideals have a linear minimal free resolution. This was already well-understood for maximal minors (see \cite[Thm.~5.4]{akin-buchsbaum-weyman} and \cite{bruns-conca-varbaro}), as well as for $1\times 1$ minors since they generate the maximal homogeneous ideal.

\begin{linear-resolutions*}\label{thm:linear-res}
 Consider positive integers $d$, and $m\geq n\geq p$, and let $I_p$ denote the ideal of $p\times p$ minors of the generic $m\times n$ matrix. The ideal $I_p^d$ has a linear resolution if and only if one of the following holds:
 \begin{enumerate}
  \item $p=1$.
  \item $p=n$.
  \item $p=2$ and $d\geq n-1$.
 \end{enumerate}
\end{linear-resolutions*}

Our contribution is to settle the intermediate cases ($1<p<n$) and in doing so we establish more precise results regarding the regularity of powers of determinantal ideals as follows. A celebrated result of Cutkosky--Herzog--Trung \cite[Thm.~1.1]{cutkosky-herzog-trung}, and independently of Kodiyalam \cite[Thm.~5]{kodiyalam}, asserts that for a homogeneous ideal $I$ the function $\reg(I^d)$ which measures the regularity of its powers is a linear function of $d$ when $d$ is sufficiently large. When $I=I_n$ is the ideal of maximal minors of the generic matrix, $\reg(I_n^d)=n\cdot d $ for all~$d$ by~\cite{akin-buchsbaum-weyman}. For the other determinantal ideals we prove the following (we write $I^{sat}$ for the saturation of an ideal $I$ with respect to the maximal homogeneous ideal, and $I^{(d)}$ for the $d$-th symbolic power of $I$).

\begin{regularity-powers*}\label{thm:regularity-detl}
 Consider positive integers $m\geq n>p\geq 1$, and let $I_p$ denote the ideal of $p\times p$ minors of the generic $m\times n$ matrix. We have that for $d\geq n-1$
 \begin{equation}\label{eq:reg-det-pows}
 \reg(I_p^d) = \reg((I_p^d)^{sat}) = p\cdot d + \begin{cases}
 \displaystyle \left(\frac{p-1}{2}\right)^2 & \rm{when }p\rm{ is odd}; \\
 \displaystyle \frac{(p-2)\cdot p}{4} & \rm{when }p\rm{ is even},
 \end{cases}
 \qquad\rm{ and }
 \end{equation}
 \begin{equation}\label{eq:reg-det-symb-pows}
 \reg(I_p^{(d)})=p\cdot d.
 \end{equation}
 When $p=2$, we have
 \begin{equation}\label{eq:reg-I2}
 \reg(I_2^d)=\reg(I_2^{(d)})=d+n-1\rm{ for }d=1,\cdots,n-2.
 \end{equation}
 When $p\geq 3$, we have 
 \begin{equation}\label{eq:reg-I>2}
\reg(I_p^d)\geq\reg(I_p^{(d)})>p\cdot d\rm{ for }d = 1,\cdots,n-2.
 \end{equation}
\end{regularity-powers*}

Since $I_p^d$ is generated in degree $p\cdot d$, (\ref{eq:reg-I>2}) implies that for $3\leq p\leq n-1$ the ideal $I_p^d$ doesn't have a linear resolution. For $p=2$, (\ref{eq:reg-det-pows}) and (\ref{eq:reg-I2}) show that $I_2^d$ has a linear resolution if and only if $d\geq n-1$. Together with the already known cases $p=1$ and $p=n$ (which can also be verified based on Lemmas~\ref{lem:Zsymb-vs-Zpow} and~\ref{lem:reg-I1-In}) this proves the Theorem on Linear Resolutions. We prove (\ref{eq:reg-det-pows}), (\ref{eq:reg-det-symb-pows}) and (\ref{eq:reg-I>2}) in Theorem~\ref{thm:reg-pows}, and (\ref{eq:reg-I2}) in Theorem~\ref{thm:2x2}.

The problem of finding effective bounds for the stabilization of regularity of powers and the question of determining the constant terms of the corresponding linear functions turn out to be quite subtle and have received a great deal of attention \cites{eisenbud-harris,eisenbud-ulrich,chardin}. The Theorem on Regularity provides, in the case of determinantal ideals, a concrete description of the constant terms, and it shows that the functions $\reg(I_2^d)$ for $n\geq 3$ and $\reg(I_p^{(d)})$ for $1<p<n$ stabilize to a linear function of $d$ precisely at $d=n-1$. For $p\geq 3$ the stabilization of $\reg(I_p^d)$ may occur earlier, but experiments suggest that our bound is close to being sharp (the smallest example of an early stabilization is for $n=9$, $p=7$, when stabilization occurs at $d=7$; we are aware of no example where stabilization occurs before $d=n-2$). 

Recall that the regularity of a graded $S$-module $M$ can be computed via \cite[Prop.~20.16]{eisenbud-CA}
\begin{equation}\label{eq:regFromExt}
\reg(M)=\max\{-r-j:\Ext^j_S(M,S)_r\neq 0\},
\end{equation}
and that $\reg(I) = \reg(S/I) + 1$ when $I$ is a homogeneous ideal, so the Theorem on Regularity can be proved using knowledge of the graded vector space structure of $\Ext^j_S(S/I,S)$ when $I=I_p^d$, $I=(I_p^d)^{sat}$, or $I=I_p^{(d)}$. Our main result completely describes this structure for an arbitrary $\GL$-invariant ideal $I$, and it can be stated slightly imprecisely as follows.
 
\begin{main-thm*}
 To any $\GL$-invariant ideal $I\subseteq S$ we can associate a finite set $\mc{M}(I)$ of $\GL$-equivariant $S$-modules with the property that for each $j\geq 0$
 \[\Ext^j_S(S/I,S) \simeq \bigoplus_{M\in\mc{M}(I)} \Ext^j_S(M,S),\]
 where the above isomorphism is $\GL$-equivariant and degree preserving (but in general it does not preserve the $S$-module structure). In particular, we get
 \[\reg(S/I) = \max_{M\in\mc{M}(I)}\reg(M).\]
 The sets $\mc{M}(I)$ and the modules $\Ext^j_S(M,S)$ for $M\in\mc{M}(I)$ can be computed explicitly. Furthermore, the association $I\mapsto\mc{M}(I)$ has the property that whenever  $I\supseteq J$ are $\GL$-invariant ideals, the (co)kernels and images of the induced maps $\Ext^j_S(S/I,S)\lra\Ext^j_S(S/J,S)$ can be computed as follows.
  \[\ker\left(\Ext^j_S(S/I,S)\lra\Ext^j_S(S/J,S)\right) = \bigoplus_{M\in\mc{M}(I)\setminus\mc{M}(J)}\Ext^{j}_S(M,S),\]
  \[\operatorname{Im}\left(\Ext^j_S(S/I,S)\lra\Ext^j_S(S/J,S)\right) = \bigoplus_{M\in\mc{M}(I)\cap\mc{M}(J)}\Ext^{j}_S(M,S),\]
  \[\coker\left(\Ext^j_S(S/I,S)\lra\Ext^j_S(S/J,S)\right) = \bigoplus_{M\in\mc{M}(J)\setminus\mc{M}(I)}\Ext^{j}_S(M,S).\]
 Finally, if we write $I:I_p^{\infty}$ for the saturation of $I$ with respect to $I_p$ then $\mc{M}(I:I_p^{\infty})\subseteq\mc{M}(I)$. More precisely
 \[\mc{M}(I:I_p^{\infty}) = \{M\in\mc{M}(I): \Ann(M) \not\subseteq I_p\}.\]
\end{main-thm*}

We make a couple of observations that will make the statement of the Main Theorem more precise, as well as provide some guidance to the reader:

\begin{itemize}
 \item The $\GL$-invariant ideals $I\subseteq S$ are indexed by sets of partitions and they have been classified in \cite{deconcini-eisenbud-procesi}: we recall the classification in Section~\ref{subsec:GLinv-ideals}.
 \item The modules $M$ appearing in $\mc{M}(I)$ are among the quotients $J_{\z,l}$ of $\GL$-invariant ideals defined in (\ref{eq:defJzl}) below. The calculation of the corresponding $\Ext$ modules was done in \cite[Section~3]{raicu-weyman} and is recalled in Theorem~\ref{thm:ExtJzl}.
 \item The set of pairs $(\z,l)$ for which $M=J_{\z,l}$ belongs to $\mc{M}(I)$ for a given $\GL$-invariant ideal $I$ is described in Definition~\ref{def:ZX}.
 \item A precise formulation of the Main Theorem is found in Theorem~\ref{thm:Ext-split-IX}, whose proof is the content of Section~\ref{sec:Ext-split-thick}.
 \item A concrete example of a calculation of $\Ext$ modules is presented in Section~\ref{sec:example}.
\end{itemize}

The Main Theorem yields two important instances where the induced maps on $\Ext$ modules are injective.

\begin{injectivity-Ext*} For $1\leq p\leq n$ and $d\geq 1$ consider the inclusions $I_p^d \subseteq I_p^{(d)}$ and $I_p^{(d+1)} \subseteq I_p^{(d)}$. For each $j\geq 0$ the induced maps on $\Ext$ modules
\[\Ext^j_S(S/I_p^{(d)},S) \lra \Ext^j_S(S/I_p^d,S) \quad \textrm{ and } \quad \Ext^j_S(S/I_p^{(d)},S) \lra \Ext^j_S(S/I_p^{(d+1)},S)\]
are injective.
\end{injectivity-Ext*}

The first injectivity follows from the fact that $I_p^{(d)} = I_p^d : I_{p-1}^{\infty}$, and therefore $\mc{M}(I_p^{(d)}) \subseteq \mc{M}(I_p^d)$ by the last assertion in the Main Theorem. The injectivity of $\Ext$ module maps induced by $I_p^{(d+1)} \subseteq I_p^{(d)}$ is proved in Corollary~\ref{cor:inj-Ext-symb}. It follows that for each $j\geq 0$ the local cohomology groups $H^j_{I_p}(S)$ can be described as a union of $\Ext$ modules
\[H^{j}_{I_p}(S) = \bigcup_{d\geq 1} \Ext^{j}_S(S/I_p^{(d)},S),\]
providing an answer in the case of determinantal ideals to a question of Eisenbud--Musta\c t\uuu a--Stillman \cite[Question~6.1]{EMS}. For a different answer, see \cite[Thm.~4.2 and Section~6]{raicu-weyman}. In \cite[Question~6.2 and Example~6.3]{EMS}, the authors show that in order for the induced maps $\Ext^{j}_S(S/I,S) \lra H^{j}_I(S)$ to be injective for all $j\geq 0$, the ideal $I$ needs to be unmixed. We prove in Corollary~\ref{cor:unmixed} that the converse holds when $I$ is a $\GL$-invariant ideal in $S=\bb{C}[x_{ij}]$.

The calculation of $\Ext$ modules in the Main Theorem can be translated via graded local duality to one of sheaf cohomology modules for the associated projective schemes. We obtain the following  vanishing theorem (see \cite[Section~3]{BBLSZ} and \cite{arapura-jaffe}), of which we prove a slightly stronger version in Section~\ref{sec:Kodaira}.

\begin{Kodaira*}
 For $m\geq n\geq p\geq 2$ we let $Y\subset\bb{P}^{m\cdot n-1}$ denote the projective variety of matrices of rank smaller than $p$, whose homogeneous ideal is $I_p$. For $d\geq 1$, we let $Y_d\subset\bb{P}^{m\cdot n-1}$ be the projective scheme defined by the ideal $I_p^d$ (the \defi{$d$-th thickening} of $Y$). We have that 
 \[H^k(Y_d,\mc{O}_{Y_d}(-j))=0\rm{ for }k<\codim(\Sing(Y))\rm{ and }j>0,\]
 with the convention that $\codim(\Sing(Y))=\dim(Y)$ when $Y$ is non-singular.
\end{Kodaira*}

\subsection*{Organization} In Section~\ref{subsec:GLinv-ideals} we recall the description and basic properties of $\GL$-invariant ideals from \cite{deconcini-eisenbud-procesi}, and in Section~\ref{subsec:ExtJzl} we recall from \cite{raicu-weyman} the calculation of the $\Ext$ modules for the subquotients $J_{\z,l}$ that are the building blocks for the coordinate rings of the determinantal thickenings. In Section~\ref{subsec:Ext-split-filtrations} we introduce the notion of $\Ext$-split filtrations, and in Section~\ref{sec:Ext-split-thick} we construct such filtrations for the coordinate rings of determinantal thickenings. In Section~\ref{sec:optimization} we set up and solve an optimization problem that will allow us to perform all the subsequent regularity calculations. In Section~\ref{sec:reg-pows} we determine the regularity of determinantal powers, while in Section~\ref{sec:Kodaira} we prove Kodaira vanishing for determinantal thickenings. We end with a concrete calculation of $\Ext$ modules in Section~\ref{sec:example}, explaining the table in \cite[Example~5.4]{BBLSZ}.


\section{Preliminaries}\label{sec:prelim}

\subsection{$\GL$-invariant ideals in $S$}\label{subsec:GLinv-ideals}

For $m\geq n$ we consider the ring $S=\Sym(\bb{C}^m\oo\bb{C}^n)$ of polynomial functions on the space of complex $m\times n$ matrices, with the natural action by $\GL=\GL_m(\bb{C})\times\GL_n(\bb{C})$. The $\GL$-invariant ideals $I\subset S$ have been completely classified in \cite{deconcini-eisenbud-procesi}. In order to recall their classification we need to set up some notation. A \defi{partition} $\ul{x}=(x_1,x_2,\cdots)$ is a finite collection of non-negative integers, with $x_1\geq x_2\geq\cdots$. We call each $x_i$ a \defi{part} of $\ul{x}$, and define the \defi{size} of $\ul{x}$ to be $|\ul{x}|=x_1+x_2+\cdots$. Most of the time we suppress the parts of size zero from the notation, for instance the partitions $(4,2,1,0,0)$ and $(4,2,1)$ are considered to be the same; their size is $7=4+2+1$. When $\x$ has repeated parts, we often use the abbreviation $(b^a)$ for the sequence $(b,b,\cdots,b)$ of length $a$. For instance, $(4,4,4,3,3,3,3,3,2,1)$ would be abbreviated as $(4^3,3^5,2,1)$. We denote by $\mc{P}_n$ the collection of partitions with at most $n$ non-zero parts. It is often convenient to identify a partition $\x$ with the associated Young diagram:
\[\Yvcentermath1 \x = (4,2,1,0,0) \quad\quad\llra\quad\quad \yng(4,2,1)\]
When we refer to a row/column of $\x$, we will mean a row/column of the associated Young diagram. Given a partition $\x$, we can then construct the conjugate partition $\x'$ by transposing the associate Young diagrams: $x'_i$ counts the number of boxes in the $i$-th column of $\x$, e.g. $(4,2,1)'=(3,2,1,1)$. Given a positive integer $c$, we write $\x(c)$ for the partition defined by $\x(c)_i=\min(x_i,c)$: the non-zero columns of $\x(c)$ are precisely the first $c$ columns of $\x$.

Cauchy's identity yields the decomposition of $S$ into irreducible $\GL$-representations
\begin{equation}\label{eq:Cauchy}
S = \bigoplus_{\x\in\P_n}S_{\x}\bb{C}^m\oo S_{\x}\bb{C}^n,
\end{equation}
where $S_{\x}$ denotes the Schur functor associated to the partition $\x$. By abusing notation we will write
\[S_{\x}=S_{\x}\bb{C}^m\oo S_{\x}\bb{C}^n,\rm{ so that }S=\bigoplus_{\x\in\P_n} S_{\x}.\]
Very concretely, we can realize $S_{\x}$ as the linear span of the $\GL$-orbit of a highest weight vector as follows. Thinking of $S$ as $\bb{C}[x_{ij}]$, $1\leq i\leq m$, $1\leq j\leq n$, we define for each $l=1,\cdots,n$ the polynomial 
\begin{equation}\label{eq:detl}
\det_l = \det(x_{ij})_{1\leq i,j\leq l}.
\end{equation}
For $\x\in\P_n$ we define
\begin{equation}\label{eq:detx}
 \det_{\x} = \prod_{i=1}^{x_1} \det_{x'_i}.
\end{equation}
We have that $S_{\x}$ is the linear span of the orbit $\GL\cdot\det_{\x}$.

Given any partitions $\x,\y\in\P_n$, we write $\x\leq\y$ if $x_i\leq y_i$ for all $i$. We say that $\x$ and $\y$ are \defi{incomparable} if neither $\x\leq\y$ nor $\y\leq x$, as it is for instance the case when $\x=(2,2)$ and $\y=(3,1)$. We define $I_{\x}$ to be the ideal in $S$ generated by the component $S_{\x}$ in the decomposition (\ref{eq:Cauchy}). It is shown in \cite{deconcini-eisenbud-procesi} that 
\begin{equation}\label{eq:dec-Ix}
 I_{\x} = \bigoplus_{\y\geq\x} S_{\y}
\end{equation}
or equivalently, for $\x,\y\in\P_n$ one has
\begin{equation}\label{eq:inclIxIy}
\x\leq\y \Longleftrightarrow I_{\y}\subseteq I_{\x}.
\end{equation}
Since any $\GL$-invariant ideal $I\subset S$ is minimally generated by a finite number of irreducible $\GL$-representations, we get that it is of the form 
\begin{equation}\label{eq:defIX}
 I_{\mc{X}}=\sum_{\ul{x}\in\mc{X}} I_{\ul{x}},\rm{ for }\X\subseteq\P_n,
\end{equation}
where $\X$ may be further assumed to consist of incomparable partitions. If we write $\sup(\x,\y)$ for the partition defined via
\begin{equation}\label{eq:supxy}
 \sup(\x,\y)_i = \max(x_i,y_i)
\end{equation}
then it follows from (\ref{eq:dec-Ix}) that
\begin{equation}\label{eq:IXcapIY}
 I_{\X} \cap I_{\Y} = \sum_{\x\in\X,\y\in\Y} I_{\sup(\x,\y)}.
\end{equation}

We now recall the definition of the subquotients $J_{\z,l}$ from \cite[Sec.~2B]{raicu-weyman}, which will play an essential role throughout the paper. For $l=0,\cdots,n$ and $\z\in\P_n$, we consider the collection of partitions obtained from $\z$ by adding a single box to its Young diagram in row $(l+1)$ or higher:
\begin{equation}\label{eq:defSucc}
 \mf{succ}(\z,l) = \{\x\in\P_n:\x\geq\z\rm{ and }x_i>z_i\rm{ for some }i>l\}.
\end{equation}
We define (see \cite[(2-6)]{raicu-weyman})
\begin{equation}\label{eq:defJzl}
 J_{\z,l} = I_{\z}/I_{\mf{succ}(\z,l)}.
\end{equation}
Even though this definition makes sense in general, we will only consider $J_{\z,l}$ when the partition $\z$ has the property that $z_1=z_2=\cdots=z_{l+1}$. When $\z=(4^3,3,1)$, $l=2$ and $n\geq 6$, we get that
\begin{equation}\label{eq:anX-Isucc}
I_{\mf{succ}(\z,l)} = I_{\X},\rm{ where }\X=\{(5^3,3,1),(4^4,1),(4^3,3,2),(4^3,3,1,1)\}.
\end{equation}

To every $(\z,l)$ with $z_1=\cdots=z_{l+1}$, we associate the collection of rectangular partitions
\begin{equation}\label{eq:def-Yzl}
 \Y_{\z,l} = \{ ((z_1+1)^{l+1}) \} \cup \{ ((z_i+1)^i) : i>l+1\rm{ and }z_{i-1}>z_i\}.
\end{equation}
In the example above where $\z=(4^3,3,1)$, $l=2$ and $n\geq 6$ we get
\[\Y_{\z,l} = \{(5^3),(4^4),(2^5),(1^6)\}.\]
If $\X$ is as in (\ref{eq:anX-Isucc}) then it follows from (\ref{eq:IXcapIY}) that
\[I_{\z}\cap I_{\Y_{\z,l}} = I_{\X} = I_{\mf{succ}(\z,l)}.\]
$I_{\Y_{\z,l}}$ is the maximal $\GL$-invariant ideal which satisfies this equality, as we explain next.

\begin{lemma}\label{lem:Jzl=Iz-cap-IY}
 If $\z$ is a partition with $z_1=\cdots=z_{l+1}$ then $I_{\z}\cap I_{\Y_{\z,l}} = I_{\mf{succ}(\z,l)}$. Moreover, if $I_{\Y}$ is a $\GL$-invariant ideal with the property that $I_{\z}\cap I_{\Y} \subseteq I_{\mf{succ}(\z,l)}$ then $I_{\Y}\subseteq I_{\Y_{\z,l}}$.
\end{lemma}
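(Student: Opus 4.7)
My plan is to translate both claims into combinatorial statements about partitions, using (\ref{eq:IXcapIY}) to expand intersections together with (\ref{eq:inclIxIy}) and (\ref{eq:dec-Ix}) to rewrite ideal containments as dominance relations.

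For the first equality, (\ref{eq:IXcapIY}) gives $I_{\z}\cap I_{\Y_{\z,l}}=\sum_{\y\in\Y_{\z,l}} I_{\sup(\z,\y)}$, so I would first check that $\sup(\z,\y)\in\mf{succ}(\z,l)$ for every $\y\in\Y_{\z,l}$. Using the ``flat top'' assumption $z_1=\cdots=z_{l+1}$, the partition $\sup(\z,((z_1+1)^{l+1}))$ is obtained from $\z$ by raising each of its first $l+1$ rows by one, so row $l+1>l$ has grown; and using the staircase hypothesis $z_{i-1}>z_i$, the partition $\sup(\z,((z_i+1)^i))$ is $\z$ with a single box added to row $i>l$. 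This gives $I_{\z}\cap I_{\Y_{\z,l}}\subseteq I_{\mf{succ}(\z,l)}$.

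For the reverse inclusion, given $\x\in\mf{succ}(\z,l)$, I would set $i_0=\min\{i>l:x_i>z_i\}$ and $i_1=\min\{i:z_i=z_{i_0}\}\leq i_0$, and split into two cases. If $i_1\leq l+1$, the flat top hypothesis forces $z_{i_0}=z_1$, and $\y=((z_1+1)^{l+1})\in\Y_{\z,l}$ satisfies $\x\geq\sup(\z,\y)$ via $x_{l+1}\geq x_{i_0}\geq z_{i_0}+1=z_1+1$. If $i_1>l+1$, then by minimality of $i_1$ we have $z_{i_1-1}>z_{i_1}=z_{i_0}$, so $\y=((z_{i_0}+1)^{i_1})\in\Y_{\z,l}$, and $\x\geq\sup(\z,\y)$ since $x_{i_1}\geq x_{i_0}\geq z_{i_0}+1$.

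For the second statement, writing $I_{\Y}=\sum_{\y\in\Y} I_{\y}$, it suffices by (\ref{eq:inclIxIy}) to show each generator $\y\in\Y$ dominates some element of $\Y_{\z,l}$. The hypothesis yields $I_{\sup(\z,\y)}\subseteq I_{\mf{succ}(\z,l)}$, and upward-closedness of $\mf{succ}(\z,l)$ gives $\sup(\z,\y)\in\mf{succ}(\z,l)$ itself. I would then rerun the same case analysis with $\x:=\sup(\z,\y)$; the crucial observation is that $x_{i_0}=\max(y_{i_0},z_{i_0})>z_{i_0}$ forces $y_{i_0}\geq z_{i_0}+1$, and then monotonicity of the partition $\y$ upgrades this to $y_{i_1}\geq z_{i_0}+1$ in the first sub-case and $y_{l+1}\geq z_1+1$ in the second, which is exactly what is needed for $\y\geq\y'$, where $\y'\in\Y_{\z,l}$ is the rectangle constructed above.

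The main obstacle, really the only subtle point, is calibrating the case analysis so that it produces an element of $\Y_{\z,l}$ and not merely of the larger set $\mf{succ}(\z,l)$: when $\z$ has a block of equal parts at level $z_{i_0}$, one must jump from $i_0$ up to $i_1$, or all the way up to row $l+1$, to land on a rectangle satisfying the staircase condition built into (\ref{eq:def-Yzl}). This is precisely what motivates the form of the definition of $\Y_{\z,l}$.
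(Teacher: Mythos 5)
Your proof is correct and takes essentially the same route as the paper's: both reduce the statement to partition combinatorics via the De Concini--Eisenbud--Procesi decomposition and the identity $I_{\z}\cap I_{\y}=I_{\sup(\z,\y)}$, and both locate, for each element of $\mf{succ}(\z,l)$, a rectangle of $\Y_{\z,l}$ that it dominates --- your indexing by $i_1=\min\{i:z_i=z_{i_0}\}$ is only a cosmetic reshuffling of the paper's case split on whether the minimal row $i>l$ with $x_i>z_i$ equals $l+1$. The one blemish is that in your final paragraph the labels ``first sub-case'' and ``second sub-case'' are swapped relative to your earlier ordering, but since both needed inequalities $y_{l+1}\geq z_1+1$ and $y_{i_1}\geq z_{i_0}+1$ are present and correct, this is purely typographical.
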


\begin{proof} Let us first prove the equality $I_{\z}\cap I_{\Y_{\z,l}} = I_{\mf{succ}(\z,l)}$.

``$\subseteq$": if $S_{\x}\subset I_{\z}\cap I_{\Y_{\z,l}}$ then it follows from (\ref{eq:dec-Ix}) that $\x\geq\z$ and either $x_{l+1}\geq z_1+1>z_{l+1}$, or $x_i\geq z_i+1>z_i$ for some $i>l+1$. This means that $\x\in\mf{succ}(\z,l)$, i.e. $S_{\x}\subset I_{\mf{succ}(\z,l)}$.

``$\supseteq$": if $S_{\x}\subset I_{\mf{succ}(\z,l)}$ then $\x\in\mf{succ}(\z,l)$, so $\x\geq\z$, i.e. $S_{\x}\subset I_{\z}$. Moreover, we have that $x_i>z_i$ for some $i>l$. Let us consider the minimal $i$ for which this inequality holds. If $i=l+1$ then $x_{l+1}>z_{l+1}=z_1$, which implies $\x\geq(z_1+1)^{l+1}$ and thus $S_{\x}\subset I_{\Y_{\z,l}}$. If $i>l+1$ then by the minimality of $i$ we have $x_{i-1}=z_{i-1}$ and $x_i>z_i$. This shows that $(z_i+1)^i\in\Y_{\z,l}$ and $\x\geq(z_i+1)^i$, thus $S_{\x}\subset I_{\Y_{\z,l}}$.

To prove the final statement of the Lemma, assume that $I_{\z}\cap I_{\Y} \subseteq I_{\mf{succ}(\z,l)}$ and that $\x$ is such that $S_{\x}\subset I_{\Y}$. It follows that $I_{\sup(\z,\x)}=I_{\z}\cap I_{\x}\subseteq I_{\mf{succ}(\z,l)}$, and thus $\sup(\z,\x)\in\mf{succ}(\z,l)$. This means that $x_i>z_i$ for some $i>l$, so choosing the minimal such $i$ we can argue as in the proof of ``$\supseteq$" to conclude that $S_{\x}\subset I_{\Y_{\z,l}}$.
\end{proof}

\begin{corollary}\label{cor:Jzl-sub-IY}
 There exists a $\GL$-equivariant inclusion of $S$-modules $J_{\z,l}\subseteq S/I_{\Y}$ if and only if $I_{\Y_{\z,l}}\supseteq I_{\Y}\supseteq I_{\mf{succ}(\z,l)}$. Moreover, such an inclusion is uniquely defined up to a scalar.
\end{corollary}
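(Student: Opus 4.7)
The strategy is to reduce everything to the behaviour on the ``bottom'' $\GL$-isotypic component $S_{\z}\subset J_{\z,l}$. From (\ref{eq:dec-Ix}) applied to both $I_{\z}$ and $I_{\mf{succ}(\z,l)}$ one reads off the $\GL$-decomposition $J_{\z,l}=\bigoplus S_{\y}$, taken over partitions $\y\geq\z$ that do not lie in $\mf{succ}(\z,l)$; in particular $S_{\z}$ occurs with multiplicity one. Moreover, since by definition $I_{\z}$ is generated as an $S$-module by $S_{\z}$, so is the quotient $J_{\z,l}$.

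For the ``if'' direction, assume $I_{\Y_{\z,l}}\supseteq I_{\Y}\supseteq I_{\mf{succ}(\z,l)}$. Using that $I_{\mf{succ}(\z,l)}\subseteq I_{\z}$ (all partitions in $\mf{succ}(\z,l)$ dominate $\z$), the right inclusion yields $I_{\mf{succ}(\z,l)}\subseteq I_{\z}\cap I_{\Y}$, while the left inclusion combined with Lemma~\ref{lem:Jzl=Iz-cap-IY} yields $I_{\z}\cap I_{\Y}\subseteq I_{\z}\cap I_{\Y_{\z,l}}=I_{\mf{succ}(\z,l)}$. Hence $I_{\z}\cap I_{\Y}=I_{\mf{succ}(\z,l)}$, so the reduction map $I_{\z}\hra S\onto S/I_{\Y}$ has kernel exactly $I_{\mf{succ}(\z,l)}$ and therefore descends to a $\GL$-equivariant $S$-linear injection $J_{\z,l}\hra S/I_{\Y}$.

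For the ``only if'' direction and uniqueness, suppose $\phi:J_{\z,l}\hra S/I_{\Y}$ is an arbitrary $\GL$-equivariant $S$-linear inclusion. Each Schur component $S_{\y}$ appears at most once in $S$ and hence at most once in the subquotient $S/I_{\Y}$, so Schur's Lemma forces $\phi$ restricted to $S_{\z}\subset J_{\z,l}$ to be a scalar multiple of the canonical inclusion $S_{\z}\hra S/I_{\Y}$; this scalar must be nonzero, for otherwise $\phi$ would vanish on a set of $S$-module generators of $J_{\z,l}$ and be zero, contradicting injectivity. Extending by $S$-linearity, the composition $I_{\z}\onto J_{\z,l}\xrightarrow{\phi} S/I_{\Y}$ equals that same nonzero scalar times the reduction map $I_{\z}\to S/I_{\Y}$; comparing kernels gives $I_{\z}\cap I_{\Y}=I_{\mf{succ}(\z,l)}$. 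The inclusion $I_{\Y}\supseteq I_{\mf{succ}(\z,l)}$ is then immediate, and $I_{\Y}\subseteq I_{\Y_{\z,l}}$ is precisely the maximality assertion in the second half of Lemma~\ref{lem:Jzl=Iz-cap-IY}. Finally, uniqueness is automatic: $\phi$ is determined by its restriction to the generating subspace $S_{\z}$, and that restriction is determined up to a scalar by Schur's Lemma.

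The only subtlety worth flagging is the step that the scalar $c=\phi|_{S_{\z}}$ is nonzero; this rests on the structural observation that $S_{\z}$ generates $J_{\z,l}$ as an $S$-module, without which one could not propagate information from the bottom Schur piece to the whole module. Once this point is secured, both implications are short formalities built on top of Lemma~\ref{lem:Jzl=Iz-cap-IY}.
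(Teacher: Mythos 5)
Your proof is correct and follows essentially the same route as the paper's: reduce to the generating Schur piece $S_{\z}$, use multiplicity-freeness of $S/I_{\Y}$ to pin down the map up to scalar, and invoke Lemma~\ref{lem:Jzl=Iz-cap-IY} to convert the condition $I_{\z}\cap I_{\Y}=I_{\mf{succ}(\z,l)}$ into the sandwich $I_{\Y_{\z,l}}\supseteq I_{\Y}\supseteq I_{\mf{succ}(\z,l)}$. You are somewhat more explicit than the paper about why the scalar is nonzero (needed for injectivity to fail on the zero case), which is a worthwhile detail but not a different approach.
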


\begin{proof}
 If $I_{\Y_{\z,l}}\supseteq I_{\Y}\supseteq I_{\mf{succ}(\z,l)}$ then
 \[I_{\mf{succ}(\z,l)}\overset{\rm{Lemma }\ref{lem:Jzl=Iz-cap-IY}}{=}I_{\z}\cap I_{\Y_{\z,l}}\supseteq I_{\z}\cap I_{\Y}\supseteq I_{\z}\cap I_{\mf{succ}(\z,l)}\overset{(\ref{eq:dec-Ix})}{=}I_{\mf{succ}(\z,l)}.\]
 Since the kernel of the composition $I_{\z}\subseteq S \onto S/I_{\Y}$ is equal to $I_{\z}\cap I_{\Y}$, we obtain an inclusion
 \[J_{\z,l}=\frac{I_{\z}}{I_{\mf{succ}(\z,l)}}=\frac{I_{\z}}{I_{\z}\cap I_{\Y}}\subseteq S/I_{\Y},\]
 which is clearly $\GL$-equivariant.
 
 For the reverse implication, as well as for the uniqueness of the inclusion, note that $J_{\z,l}$ is a quotient of $I_{\z}$, so it is generated as an $S$-module by its $S_{\z}$ component. Since $S/I_{\Y}$ has a multiplicity-free decomposition into irreducible $\GL$-representations, there exist, up to scalar, at most one $\GL$-equivariant map from $S_{\z}$ to $S/I_{\Y}$. Such a map induces (up to scalar) the composition $I_{\z}\subseteq S\onto S/I_{\Y}$, which descends to an $S$-module inclusion map $J_{\z,l}\subseteq S/I_{\Y}$ if and only if $I_{\z}\cap I_{\Y}=I_{\mf{succ}(\z,l)}$. Clearly this forces $I_{\Y}\supseteq I_{\mf{succ}(\z,l)}$, and by the last part of Lemma~\ref{lem:Jzl=Iz-cap-IY} it forces $I_{\Y}\subseteq I_{\Y_{\z,l}}$, as desired.
\end{proof}

Recall the definition of the \defi{saturation} of an ideal $I$ with respect to $J$,
\begin{equation}\label{eq:def-saturation}
I:J^{\infty} = \{f\in S: f\cdot J^d \subseteq I\textrm{ for }d\gg 0\}.
\end{equation}
When $I=I_{\X}$ and $J=I_p$ the saturation can be described concretely as follows. For $\X\subset\P_n$ we define 
\begin{equation}\label{eq:p-saturation}
 \X^{:p} = \{\x(c): \x\in\X,c\in\bb{Z}_{\geq 0}, x'_c> p\rm{ if }c>0,\rm{ and }x'_{c+1}\leq p\}
\end{equation}
In terms of Young diagrams, we can think of $\X^{:p}$ as being obtained from $\X$ by removing from each $\x\in\X$ the columns of size $\leq p$.

\begin{lemma}\label{lem:saturation}
 For every $\X\subset\P_n$, the saturation of $I_{\X}$ with respect to $I_p$ can be described by
\[I_{\X}:I_p^{\infty} = I_{\X^{:p}}.\]
\end{lemma}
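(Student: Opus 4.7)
The plan is to exploit the $\GL$-invariance of both sides: since $S=\bigoplus_{\mu\in\P_n}S_{\mu}$ is multiplicity-free as a $\GL$-module, both $I_{\X}:I_p^{\infty}$ and $I_{\X^{:p}}$ decompose as direct sums of $S_{\mu}$'s, and the equality will follow by characterizing for each $\mu\in\P_n$ when $S_{\mu}$ belongs to each side. Unwinding (\ref{eq:p-saturation}), I observe that when $\x\in\X$ has more than $p$ parts the associated value $c$ equals $x_{p+1}$, so $\x(c)=(c^{p+1},x_{p+2},x_{p+3},\ldots)$; hence $S_{\mu}\subseteq I_{\X^{:p}}$ amounts to the existence of some $\x\in\X$ with $\mu_i\geq x_{p+1}$ for $i\leq p+1$ and $\mu_i\geq x_i$ for $i>p+1$.

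The inclusion $\subseteq$ will follow from a highest-weight-vector argument. Suppose $S_{\mu}\cdot I_p^d\subseteq I_{\X}$ for some $d\gg 0$. The product $\det_{\mu}\cdot\det_p^d$ is nonzero since $S$ is a domain, and it is a highest weight vector of weight $\mu+(d^p)$, which is a partition once $d$ is large. Because $I_{\X}=\bigoplus_{\nu\geq\x\rm{ for some }\x\in\X}S_{\nu}$ by (\ref{eq:dec-Ix}), the whole irreducible $S_{\mu+(d^p)}$ lies in some $I_{\x}$, yielding $\mu+(d^p)\geq\x$ for some $\x\in\X$. The constraints on rows $i\leq p$ become trivial for $d$ large, but the rows $i>p$ give $\mu_i\geq x_i$; the partition property of $\mu$ then forces $\mu_i\geq\mu_{p+1}\geq x_{p+1}$ for $i\leq p+1$. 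This is precisely the characterization above, so $S_{\mu}\subseteq I_{\X^{:p}}$.

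For the reverse inclusion $\supseteq$, the plan is to reduce to showing $I_{\x(c)}\cdot I_p^d\subseteq I_{\x}$ for each $\x\in\X$ (with $c=x_{p+1}$) and for $d$ sufficiently large, which then gives $I_{\x(c)}\subseteq I_{\x}:I_p^{\infty}\subseteq I_{\X}:I_p^{\infty}$ and hence $I_{\X^{:p}}=\sum_{\x\in\X}I_{\x(c)}\subseteq I_{\X}:I_p^{\infty}$. Equivalently, every irreducible $S_{\rho}$ appearing in $S_{\x(c)}\cdot I_p^d$ must satisfy $\rho\geq\x$. By Littlewood--Richardson any such $\rho$ automatically satisfies $\rho\geq\x(c)$ (handling rows $i\geq p+1$) and $\rho\geq\nu$ for some minimal generator $\nu$ of $I_p^d$, the latter appearing in the plethysm $\Sym^d(\wedge^p\bb{C}^m\otimes\wedge^p\bb{C}^n)$. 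The hard part will be forcing $\rho_i\geq x_i$ for $i\leq p$: for small $d$ there exist narrow minimal generators (notably $(1^{pd})$ when $pd\leq n$) whose products with $S_{\x(c)}$ can produce $S_{\rho}$'s violating this bound. The crucial observation I will exploit is that for $d$ sufficiently large (depending on $n$, $p$, and $\x$) any such narrow $\nu$ is forced to exceed $n$ rows and thereby vanishes in $\P_n$, since the constraint $|\nu|=pd$ together with $\ell(\nu)\leq n$ yields $\nu_1\geq pd/n$; a case analysis of the remaining LR tableaux combined with the partition shape of $\rho$ then forces $\rho_i\geq x_i$ for all $i\leq p$.
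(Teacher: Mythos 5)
Your $\subseteq$ direction is correct and is essentially the paper's argument: the paper also multiplies highest weight vectors to get $I_{\z+(d^p)}\subseteq I_{\z}\cdot I_p^d$, deduces $\mu+(d^p)\geq\x$ for some $\x\in\X$, and reads off $\mu\geq\x(c)$ from the rows $i>p$. (Two small remarks: $\mu+(d^p)$ is a partition for \emph{every} $d\geq 0$ since $\mu_p+d\geq\mu_p\geq\mu_{p+1}$, so no largeness of $d$ is needed there; and your characterization of $S_{\mu}\subseteq I_{\X^{:p}}$ via $c=x_{p+1}$ is correct.)

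The $\supseteq$ direction, however, is a plan rather than a proof, and the one concrete observation you offer does not point toward the argument that is actually needed. You correctly reduce to showing that every $S_{\rho}\subseteq S_{\x(c)}\cdot S_{\nu}$, with $\nu$ a minimal generator of $I_p^d$, satisfies $\rho\geq\x$, and you correctly note that $\rho\geq\x(c)$ handles the rows $i\geq p+1$. But the lower bound $\nu_1\geq pd/n$ plays no role in forcing $\rho_i\geq x_i$ for $i\leq p$ (a wide $\nu$ such as $(d^p)$ already satisfies $\nu_1\geq pd/n$, yet one must still rule out LR fillings that place all of $\nu$'s boxes below row $p+1$ of $\rho$). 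The input you actually need is the \emph{upper} bound $\nu_1\leq d$ on the minimal generators of $I_p^d$: it gives $\rho_1\leq\x(c)_1+\nu_1\leq c+d$ (the first non-empty row of an LR filling is all $1$'s), i.e.\ $\rho$ has at most $c+d$ columns. Combined with: (i) $\rho\geq\x(c)$ forces the first $c$ columns of $\rho$ to have height $\geq x'_j$, and (ii) a failure $\rho_i<x_i$ for some $i\leq p$ forces a column $j$ with $c<j\leq x_1$ of height $\leq p-1$, hence all columns past $x_1$ have height $\leq p-1$, one gets $|\rho|\leq n\cdot x_1+(p-1)\cdot(c+d-x_1)$, which contradicts $|\rho|=|\x(c)|+pd$ once $d$ is large. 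This column-counting step is what your sketch calls ``a case analysis of the remaining LR tableaux'' but never carries out, and it is precisely the content of the paper's proof, which bypasses the Littlewood--Richardson rule entirely by iterating Pieri's rule to obtain a partition $\t$ with $0\leq t_i-y_i\leq d$ for all $i$ and then running the same count.
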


\begin{proof} We first show that $I_{\X}:I_p^{\infty} \supseteq I_{\X^{:p}}$. Suppose that $\x\in\X$ and $c\in\bb{Z}_{\geq 0}$ are such that $x'_c > p$ (if $c>0$) and $x'_{c+1}\leq p$. We let $\y=\x(c)$ and prove that for $d\gg 0$ we have $I_{\y}\cdot I_p^d\subseteq I_{\x}\subseteq I_{\X}$ and therefore $I_{\y}\subseteq I_{\X}:I_p^{\infty}$. Using Pieri's rule, we have that for every partition $\z\in\P_n$
 \begin{equation}\label{eq:Pieri}
 I_{\z} \cdot I_p \subseteq \sum_{\t\in\P_n,\ \t/\z = (1^p)} I_{\t},
 \end{equation}
 where the notation $\t/\z=(1^p)$ means that $|\t|=|\z|+p$ and $0\leq t_i-z_i\leq 1$ for all $i=1,\cdots,n$. Applying this iteratively starting with $I_{\y}$, we obtain
 \[I_{\y} \cdot I_p^d \subseteq \sum_{\substack{\t\in\P_n,\ |\t| = |\y|+pd\\ 0\leq t_i-y_i\leq d}} I_{\t}.\]
We claim that if $d\gg 0$ then every $\t$ appearing in the above equation has the property that $\t\geq \x$ and thus $I_{\t}\subseteq I_{\x}$. To prove the claim, assume that there exists a partition $\t\in\P_n$ with $|\t|=|\y|+pd$, $0\leq t_i-y_i\leq d$ and $\t\not\geq\x$. Since $\t\geq\y=\x(c)$, we must have $t_i' < x_i'$ for some $i>c$. Since $x_i'=0$ for $i>x_1$ we get $i\leq x_1$, and since $x'_i\leq x'_{c+1}\leq p$ we get $t_i'\leq p-1$. In any case, we have that for any such $\t$, $t_i' \leq p-1$ for $i\geq x_1+1$. Moreover, $t_i'=0$ for $i>c+d$ because $t_1\leq y_1+d = c+d$. Since $\t\in\P_n$, $t_i'\leq n$ for $i\leq x_1$ and therefore
\[|\y|+p\cdot d = |\t| = |\t'| \leq n\cdot x_1 + (p-1)\cdot(c+d-x_1) \Longrightarrow d\leq n\cdot x_1+(p-1)\cdot(c-x_1) - |\y|.\]
Since the above inequality fails for $d\gg 0$, the desired conclusion follows.

In order to prove that $I_{\X}:I_p^{\infty} \subseteq I_{\X^{:p}}$ we first show that
\begin{equation}\label{eq:Iz+dp}
I_{\z}\cdot I_p^d \supseteq I_{\z+(d^p)}
\end{equation}
To see this, note that using notation (\ref{eq:detl}) and (\ref{eq:detx}) we have
\[\det_{\z+(d^p)} = \det_{\z} \cdot (\det_p)^d \in I_{\z}\cdot I_p^d.\]
Since $I_{\z}$ is generated by the $\GL$-orbit of $\det_{\z+(d^p)}$, (\ref{eq:Iz+dp}) follows.

Let $\y$ be such that $I_{\y}\subseteq I_{\X}:I_p^{\infty}$ and consider a positive integer $d$ such that $I_{\y}\cdot I_p^d \subseteq I_{\X}$. It follows from (\ref{eq:Iz+dp}) that $I_{\y+(d^p)}\subseteq I_{\X}$ and therefore by (\ref{eq:defIX}) and (\ref{eq:inclIxIy}) we can find $\x\in\X$ with $\y+(d^p)\geq \x$. Let $c\in\bb{Z}_{\geq 0}$ be such that $x'_c>p+1$ (if $c>0$) and $x'_{c+1}\leq p$. We claim that $\y\geq\x(c)$, which combined with $\x(c)\in\X^{:p}$ implies $I_{\y}\subseteq I_{\X^{:p}}$, as desired. To prove the claim, we may assume that $c>0$ (otherwise $\y\geq\x(0)=\ul{0}$). Since $x'_c>p+1$ it follows that $x_i\geq c$ for $i\leq p+1$. Since $\y+(d^p)\geq \x$, it follows that $y_i\geq x_i$ for all $i>p$. In particular $y_{p+1}\geq x_{p+1}\geq c$ and therefore $y_i\geq c$ for $i\leq p$. Putting together all these inequalities yields $\y\geq \x(c)$, as desired.
\end{proof}

The following result can be proved using the more geometric description of $J_{\z,l}$ in \cite[Lemma~3.2]{raicu-weyman}. We include here a short algebraic proof for sake of completeness.

\begin{corollary}\label{cor:Ann-Jzl}
 The annihilator of $J_{\z,l}$ is given by
 \[\Ann(J_{\z,l}) = I_{l+1}.\]
 Equivalently, the scheme theoretic support of $J_{\z,l}$ consists of the (reduced) variety of matrices of rank $\leq l$.
\end{corollary}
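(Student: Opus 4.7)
The annihilator $\Ann(J_{\z,l})$ is a $\GL$-invariant ideal of $S$, hence by (\ref{eq:dec-Ix}) it equals $\bigoplus_{\y\in\mc{Y}}S_{\y}$ for some $\mc{Y}\subseteq\P_n$. Since $I_{l+1}=I_{(1^{l+1})}=\bigoplus_{y_{l+1}\geq 1}S_{\y}$, proving $\Ann(J_{\z,l})=I_{l+1}$ reduces to verifying (a) $S_{\y}\subseteq\Ann(J_{\z,l})$ whenever $y_{l+1}\geq 1$, and (b) $S_{\y}\not\subseteq\Ann(J_{\z,l})$ whenever $y_{l+1}=0$.

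For (a) it is enough to establish $I_{l+1}\subseteq\Ann(J_{\z,l})$, i.e.\ $I_{\z}\cdot I_{l+1}\subseteq I_{\mf{succ}(\z,l)}$. The plan is to apply Pieri's rule (\ref{eq:Pieri}) with $p=l+1$: every $\t$ appearing in the sum arises from $\z$ by adding $l+1$ boxes in distinct rows. Because $z_1=\cdots=z_{l+1}$, only $l$ rows lie strictly above row $l+1$, so no choice of placement avoids depositing a box in some row $i>l$; such a box produces $t_i>z_i$, so $\t\in\mf{succ}(\z,l)$ and $I_{\t}\subseteq I_{\mf{succ}(\z,l)}$.

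For (b) I would use that $J_{\z,l}$ is generated as an $S$-module by its $S_{\z}$ component, whose highest weight vector is $\det_{\z}$. Since $\Ann(J_{\z,l})$ is $\GL$-invariant, $S_{\y}\subseteq\Ann(J_{\z,l})$ is equivalent to $\det_{\y}\cdot\det_{\z}\in I_{\mf{succ}(\z,l)}$. The central computation is the identity
\[\det_{\y}\cdot\det_{\z}=\det_{\y+\z},\]
where $\y+\z$ is the componentwise sum of the two partitions. This follows from (\ref{eq:detx}) because the number of columns of length exactly $k$ in a partition $\x$ equals $x_k-x_{k+1}$, which is additive in $\x$; consequently the multisets of column lengths of $\y$ and of $\z$ combine (by union) to give that of $\y+\z$. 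The hypothesis $y_{l+1}=0$ forces $y_i=0$ for every $i>l$, hence $(\y+\z)_i=z_i$ for all $i>l$, so $\y+\z\notin\mf{succ}(\z,l)$; therefore $\det_{\y+\z}\in S_{\y+\z}$ maps to a nonzero element of $S/I_{\mf{succ}(\z,l)}$, witnessing $\det_{\y}\notin\Ann(J_{\z,l})$.

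The only real obstacle is (b): one must ensure that multiplication by $\det_{\y}$ does not kill the cyclic generator of $J_{\z,l}$. The closed-form identity $\det_{\y}\cdot\det_{\z}=\det_{\y+\z}$ makes this transparent by isolating a single Schur summand ($S_{\y+\z}$) whose row-$(i>l)$ profile is identical to that of $\z$, so that membership in $\mf{succ}(\z,l)$ can be ruled out by inspection; the Pieri step (a) is then a short bookkeeping argument given the equality $z_1=\cdots=z_{l+1}$.
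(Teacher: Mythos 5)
Your proof is correct, and for the upper bound $\Ann(J_{\z,l})\subseteq I_{l+1}$ it takes a genuinely different route from the paper. Part (a), the inclusion $I_{l+1}\subseteq\Ann(J_{\z,l})$, is essentially the Pieri/pigeonhole argument in the paper (note that this step does not actually use $z_1=\cdots=z_{l+1}$: adding $l+1$ boxes to pairwise distinct rows cannot stay in the first $l$ rows regardless of $\z$). For part (b), the paper argues indirectly: since $\Ann(J_{\z,l})$ is $\GL$-invariant its radical is some $I_p$, and using $I_{\z+(d^p)}\subseteq I_{\z}\cdot I_p^d\subseteq I_{\mf{succ}(\z,l)}$ one gets $\z+(d^p)\in\mf{succ}(\z,l)$, hence $p>l$; combined with (a) this squeezes $\Ann(J_{\z,l})=I_{l+1}$. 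You instead determine the annihilator component by component via the identity $\det_{\y}\cdot\det_{\z}=\det_{\y+\z}$, which is correct because the exponent of $\det_k$ in $\det_{\x}$ is $x_k-x_{k+1}$ and this is additive in $\x$; the observation that $y_i=0$ for $i>l$ forces $(\y+\z)_i=z_i$ for $i>l$, hence $\y+\z\notin\mf{succ}(\z,l)$, is exactly what is needed since $I_{\mf{succ}(\z,l)}=\bigoplus_{\a\in\mf{succ}(\z,l)}S_{\a}$. Your version is more explicit and avoids appealing to the structure of $\GL$-invariant radical ideals, while the paper's is slightly slicker in that it only needs one inclusion plus a coarse lower bound on $p$; both are perfectly fine.

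One small overstatement: you assert that $S_{\y}\subseteq\Ann(J_{\z,l})$ \emph{is equivalent to} $\det_{\y}\cdot\det_{\z}\in I_{\mf{succ}(\z,l)}$. The implication you actually use, namely that $\det_{\y}\cdot\det_{\z}\notin I_{\mf{succ}(\z,l)}$ forces $S_{\y}\not\subseteq\Ann(J_{\z,l})$, is immediate. The converse is not: $J_{\z,l}$ is generated over $S$ by the whole representation $S_{\z}$, not by the single element $\ol{\det_{\z}}$, so killing $\ol{\det_{\z}}$ does not a priori mean killing $J_{\z,l}$. Since you only need the one direction, this does not affect correctness, but the word ``equivalent'' should be replaced by the one-sided implication.
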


\begin{proof} 
 Since $J_{\z,l}$ is $\GL$-equivariant, its set-theoretic support is defined by an ideal of minors: $\sqrt{\Ann(J_{\z,l})} = I_p$ for some $p$. We show that $p>l$ and that $I_{l+1}\subseteq \Ann(J_{\z,l})$, from which $\Ann(J_{\z,l}) = I_{l+1}$ follows.
 
 To see that $I_{l+1}\subseteq \Ann(J_{\z,l})$ we use (\ref{eq:Pieri}): we need to show that $I_{l+1}\cdot I_{\z} \subseteq I_{\mf{succ}(\z,l)}$, so it is enough to check that if $\t/\z=(1^{l+1})$ then $\t\in\mf{succ}(\z,l)$. Since $|\t|-|\z|=l+1$ and $0\leq t_i-z_i\leq 1$ for all $i$, there exists $i>l$ such that $t_i>z_i$, so $\t\in\mf{succ}(\z,l)$ by (\ref{eq:defSucc}).
 
 Assume now that $\sqrt{\Ann(J_{\z,l})} = I_p$, so that $I_p^d\subseteq\Ann(J_{\z,l})$ for some $d\gg 0$, or equivalently $I_p^d\cdot I_{\z} \subseteq I_{\mf{succ}(\z,l)}$. Using (\ref{eq:Iz+dp}), it follows that $I_{\z+(d^p)} \subseteq I_{\mf{succ}(\z,l)}$ and therefore $\z+(d^p) \in \mf{succ}(\z,l)$ which implies $p>l$.
\end{proof}

\subsection{$\Ext$ modules for the subquotients $J_{\z,l}$ and regularity}\label{subsec:ExtJzl}

In this section we recall the calculation of $\Ext^{\bullet}_S(J_{\z,l},S)$ from \cite[Thm.~3.3]{raicu-weyman}, and use it to give a formula for the regularity of $J_{\z,l}$.

\begin{theorem}\label{thm:ExtJzl}
 Fix an integer $0\leq l\leq n$ and assume that $\z\in\P_n$ is a partition with $z_1=z_2=\cdots=z_l$. For $0\leq s\leq t_1\leq\cdots\leq t_{n-l}\leq l$ we consider the set $W(\z,l;\t,s)$ of dominant weights $\ll\in\bb{Z}^n_{\dom}$ satisfying
 \begin{equation}\label{eq:lam-in-W}
 \begin{cases}
 \ll_n \geq l - z_l - m, & \\
 \ll_{t_i+i} = t_i - z_{n+1-i} - m \quad\rm{for}\quad i=1,\cdots,n-l, & \\
 \ll_s \geq s-n \quad\rm{and}\quad \ll_{s+1} \leq s-m. & \\
 \end{cases}
 \end{equation}
 Letting $\ll(s) = (\ll_1,\cdots,\ll_s,(s-n)^{m-n},\ll_{s+1}+(m-n),\cdots,\ll_n+(m-n))\in\bb{Z}^m_{\dom}$ as in \cite[(1-2)]{raicu-weyman}, we~have
 \begin{equation}\label{eq:Extj}
 \Ext^j_S(J_{\z,l},S) = \bigoplus_{\substack{0\leq s\leq t_1\leq\cdots\leq t_{n-l}\leq l \\ m\cdot n - l^2 - s\cdot(m-n) - 2\cdot\left(\sum_{i=1}^{n-l} t_i\right)=j \\ \ll\in W(\z,l;\t,s)}} S_{\ll(s)}\bb{C}^m \oo S_{\ll}\bb{C}^n,
 \end{equation}
 where $S_{\ll(s)}\bb{C}^m \oo S_{\ll}\bb{C}^n$ appears in degree $|\ll|$. If in addition $z_{l+1}=z_l$, then every weight $\ll$ in (\ref{eq:Extj}) satisfies
 \begin{equation}\label{eq:lln=zl-m}
  \ll_n = l - z_l - m.
 \end{equation}
\end{theorem}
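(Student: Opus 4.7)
The strategy is to compute $\Ext^{\bullet}_S(J_{\z,l},S)$ via the Kempf--Lascoux--Weyman geometric technique, building on the sheaf-theoretic realization of $J_{\z,l}$ from [raicu-weyman, Lemma~3.2]. That realization identifies $J_{\z,l}$ with the pushforward under a birational desingularization $p:Z \to X_l$ of an explicit equivariant twisted sheaf, where $X_l \subset \Spec S$ is the variety of matrices of rank $\leq l$, and $Z$ is the total space of an appropriate tensor bundle over the product of Grassmannians $\operatorname{Gr}(l,\bb{C}^m) \times \operatorname{Gr}(l,\bb{C}^n)$. The twist is built from Schur functors of the tautological sub-bundles indexed by $\z$; the hypothesis $z_1 = \cdots = z_l$ ensures a clean factorization into columns of height exactly $l$ and remaining columns of smaller height, which is what makes the geometric description work.

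To pass to $\Ext$ modules, I would resolve $J_{\z,l}$ by the associated Kempf--Weyman complex (whose terms are described by Bott cohomology on the product of Grassmannians), and dualize termwise using relative Grothendieck duality for $p$. The outcome is that each $\Ext^j_S(J_{\z,l},S)$ is computed as a direct sum of cohomology groups on $\operatorname{Gr}(l,\bb{C}^m) \times \operatorname{Gr}(l,\bb{C}^n)$ with coefficients in equivariant bundles built from the twist, the relative dualizing sheaf, and symmetric powers of the bundle dual to the fibers of $Z$. Each such cohomology group is given by Bott's theorem, which outputs either zero or a single irreducible $\GL_m \times \GL_n$-representation whose highest weight is produced by sorting a weight sequence under the dotted Weyl action.

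The main technical obstacle is the combinatorial bookkeeping identifying the Bott sorting data with the parameters $s$ and $\t=(t_1,\ldots,t_{n-l})$ in the statement, and matching the resulting constraints on $\ll$ to (\ref{eq:lam-in-W}). The parameter $s$ records the position where the $\GL_m$-side sort inserts $m-n$ shifted entries, producing the weight $\ll(s)$ from $\ll$ and yielding the Bott acyclicity conditions $\ll_s \geq s-n$ and $\ll_{s+1} \leq s-m$. Each $t_i \in [0,l]$ records the position at which the $i$-th "new" row, corresponding to the part $z_{n+1-i}$, lands after the Weyl sort on the $\GL_n$ side; this locates a rigid entry, forcing the equality $\ll_{t_i+i} = t_i - z_{n+1-i} - m$. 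The inequality $\ll_n \geq l - z_l - m$ is the boundary Bott condition on the bottom row, and the cohomological degree formula $j = mn - l^2 - s(m-n) - 2\sum_i t_i$ counts the total number of simple reflections used in the sort, with the factor $2$ multiplying $\sum t_i$ reflecting the symmetric role of the two Grassmannian factors in the $\bb{C}^m \oo \bb{C}^n$-twist. Multiplicity-freeness of the direct sum, needed for the identification (\ref{eq:Extj}) rather than a filtration, follows because distinct triples $(s,\t,\ll)$ produce distinct pairs of highest weights $(\ll(s),\ll)$.

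Finally, the addendum is a clean formal consequence of (\ref{eq:Extj}). Specializing the constraint at $i = n-l$ gives $\ll_{t_{n-l}+n-l} = t_{n-l} - z_{l+1} - m = t_{n-l} - z_l - m$, using $z_{l+1}=z_l$. Since $t_{n-l}+(n-l) \leq n$, dominance of $\ll$ yields $\ll_{t_{n-l}+n-l} \geq \ll_n$, and combined with $\ll_n \geq l - z_l - m$ this forces $t_{n-l} \geq l$, hence $t_{n-l}=l$. Substituting back into the constraint gives $\ll_n = l - z_l - m$, as claimed.
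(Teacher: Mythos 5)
Your proposal is correct and matches the paper's treatment: the paper proves only the addendum (\ref{eq:lln=zl-m}), citing \cite[Thm.~3.3]{raicu-weyman} for the decomposition (\ref{eq:Extj}), and your sketch of the main body is precisely the Kempf--Weyman/Bott argument of that reference. Your proof of the addendum (specialize the rigid constraint at $i=n-l$, use dominance and $t_{n-l}\leq l$ to force $t_{n-l}=l$ and equality throughout) is the same chain of inequalities the paper uses.
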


\begin{proof}
 We only need to verify the assertion (\ref{eq:lln=zl-m}) since everything else is part of \cite[Thm.~3.3]{raicu-weyman}. Consider a weight $\ll\in W(\z,l;\t,s)$ for some $0\leq s\leq t_1\leq\cdots\leq t_{n-l}\leq l$. Letting $i=n-l$ in (\ref{eq:lam-in-W}) we obtain 
 \[t_{n-l}-z_{l+1}-m=\ll_{t_{n-l}+n-l}\geq\ll_n\geq l-z_l-m.\]
 Since $t_{n-l}\leq l$ and $z_{l+1}=z_l$, we must have an equality throughout, hence (\ref{eq:lln=zl-m}) must hold.
\end{proof}

Using Theorem~\ref{thm:ExtJzl} and the fact that the regularity of an $S$-module $M$ can be computed via (\ref{eq:regFromExt}), we can determine $\reg(J_{\z,l})$. For $0\leq l\leq n$ and $\z\in\P_n$, we define
\begin{equation}\label{eq:defTlz}
\T_l(\z) = \{\t=(t_1,\cdots,t_{n-l},t_{n-l+1}=l)\in\bb{Z}^{n-l+1}_{\geq 0}:0\leq t_{i+1}-t_i\leq z_{n-i}-z_{n+1-i}\rm{ for }i=1,\cdots,n-l\}.
\end{equation}
For $\t\in\T_l(\z)$ we write
\begin{equation}\label{eq:def-flzt}
 f_l(\z,\t) = \sum_{i=1}^{n-l} t_i\cdot (z_{n-i} - z_{n+1-i} - t_{i+1} + t_i),
\end{equation}
and note that $f_l(\z,\t)\geq 0$. As usual, we write $|\t|=t_1 + \cdots + t_{n-l+1}$ for the size of $\t$.

\begin{theorem}\label{thm:regJzl}
For $0\leq l\leq n-1$ and $\z\in\P_n$ satisfying $z_1=\cdots=z_l=z_{l+1}$, we have using the notation above that
 \begin{equation}\label{eq:regJzl}
 \reg(J_{\z,l}) = \max_{\t\in\T_l(\z)} \left(|\z| + |\t| - l - f_l(\z,\t)\right).
 \end{equation}
\end{theorem}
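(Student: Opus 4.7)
The plan is to apply the regularity formula~(\ref{eq:regFromExt}) together with Theorem~\ref{thm:ExtJzl}: since $S_{\ll(s)}\bb{C}^m \oo S_{\ll}\bb{C}^n$ sits in degree $|\ll|$ and cohomological degree $j = mn - l^2 - s(m-n) - 2\sum_{i=1}^{n-l} t_i$, the regularity is the maximum of $-|\ll| - j$ over all admissible tuples $(s,\t,\ll)$ appearing in (\ref{eq:Extj}). For each fixed $\t$ I will find the optimal $(s,\ll)$ and show the maximum equals $|\z| + |\t| - l - f_l(\z,\t)$; maximizing over $\t$ then gives (\ref{eq:regJzl}).

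First, I would identify the admissible $\t$'s. The positions $p_i := t_i + i$ are strictly increasing, and (\ref{eq:lam-in-W}) fixes $\ll_{p_i} = t_i - z_{n+1-i} - m$, while (\ref{eq:lln=zl-m}) forces $\ll_n = l - z_l - m$ under the hypothesis $z_l = z_{l+1}$. Dominance $\ll_{p_i} \geq \ll_{p_{i+1}}$ translates into $t_{i+1} - t_i \leq z_{n-i} - z_{n+1-i}$, and dominance $\ll_{p_{n-l}} \geq \ll_n$ together with $t_{n-l} \leq l$ forces $t_{n-l} = l$. Setting $t_{n-l+1} := l$ shows that the admissible $\t$'s are precisely the elements of $\T_l(\z)$.

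Next, for each $\t \in \T_l(\z)$, I would minimize $|\ll|$ by picking the smallest $\ll_j$ compatible with dominance and the fixed values: put $\ll_j = s-n$ for $1 \leq j \leq s$, and $\ll_j = \ll_{p_{k(j)}}$ (with $p_{k(j)}$ the smallest $p_i \geq j$) for $s < j \leq n$. Feasibility of $\ll_s \geq s - n$ and $\ll_{s+1} \leq s - m$ cuts the range to $\max(0, t_1 - z_n) \leq s \leq t_1$. A direct calculation compares $|\ll|_{\min}(s)$ with its ``formal'' value at $s=0$ and yields
\[
-|\ll|_{\min}(s) - j \;=\; A + s(t_1 - z_n - s), \qquad A := -\sum_{i=1}^{n-l}(t_i - t_{i-1} + 1)(t_i - z_{n+1-i} - m) - mn + l^2 + 2\sum_{i=1}^{n-l} t_i,
\]
with $t_0 := 0$. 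The quadratic $s(t_1 - z_n - s)$ is non-positive on the feasible interval and vanishes at its left endpoint $s^* = \max(0, t_1 - z_n)$, so the maximum over $s$ equals $A$.

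To simplify $A$ to the claimed form, I would apply summation-by-parts to $\sum_{i=1}^{n-l}(t_i - t_{i-1})z_{n+1-i}$, obtaining $l z_{l+1} - \sum_{i=1}^{n-l-1} t_i(z_{n-i} - z_{n+1-i})$, and use $z_1 = \cdots = z_{l+1}$ to write $|\z| = l z_{l+1} + \sum_{i=l+1}^{n} z_i$. Together with $t_{n-l} = l$ (which kills the $i=n-l$ term of $f_l$) and $|\t| = \sum_{i=1}^{n-l} t_i + l$, the cancellations collapse $A$ to $|\z| + |\t| - l - f_l(\z, \t)$. The hard part is the minimization step: correctly locating the feasibility interval for $s$, justifying that the ``bent'' configuration (constant $s-n$ on $[1,s]$, then piecewise constant on blocks $(p_{i-1}, p_i]$) is the unique minimizer, and then performing the telescoping identities; once these are in hand the final assertion is immediate by maximizing over $\t \in \T_l(\z)$.
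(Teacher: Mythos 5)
Your proof follows the same route as the paper's: apply \eqref{eq:regFromExt} to \eqref{eq:Extj}, translate the dominance constraints on $\ll$ (together with \eqref{eq:lln=zl-m}, which forces $t_{n-l}=l$) into the defining inequalities of $\T_l(\z)$, and for each fixed $\t$ minimize $|\ll|$ by taking the weight piecewise constant on the blocks determined by the positions $t_i+i$. The observation that the $s$-dependence of $-|\ll|-j$ reduces to the quadratic $s(t_1-z_n-s)$, vanishing at $s^*=\max(0,t_1-z_n)$, and the subsequent telescoping simplification to $|\z|+|\t|-l-f_l(\z,\t)$ appear in the same form in the paper's proof; the only cosmetic difference is that you factor the computation through the auxiliary ``formal $s=0$'' quantity $A$, whereas the paper writes out $|\ll|$ directly as a function of $s$ and $\t$.
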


\begin{example}\label{ex:regJz1}
 Let $n\geq 2$, $\z=(d,d)$ for some $0\leq d\leq n-2$, and $l=1$. We claim that
\[\reg(J_{\z,l})=d+n-1.\]
Consider first the case when $d=0$. We have that $\T_l(\z)$ consists of a single element, namely $\t=(1^n)$. Since $|\t|=n$, $|\z|=0$, and $f_l(\z,\t)=0$, it follows that $\reg(J_{\z,l})=n-1$, as desired. Observe that $J_{\z,l}=S/I_2$, where $I_2$ is the ideal of $2\times 2$ minors of the generic $m\times n$ matrix, which defines the Segre embedding of $\bb{P}^{m-1}\times\bb{P}^{n-1}$ inside $\bb{P}^{mn-1}$. This is well-known to have Castelnuovo--Mumford regularity $n-1$.

Assume now that $d\geq 1$. We have $|\z|=2d$, and
\[\T_l(\z) = \{\t^1=(0^{n-2},1^2),\t^2=(1^n)\}\]
\[\ |\t^1|=2,\ |\t^2|=n,\ f_l(\z,\t^1) = 0,\ f_l(\z,\t^2) = d.\]
Using Theorem~\ref{thm:regJzl} we obtain
\[\reg(J_{\z,l}) = \max\{2d+2 - 1 - 0, 2d + n - 1 - d\}=d+n-1.\]
\end{example}

\begin{proof}[Proof of Thm.~\ref{thm:regJzl}]
Using (\ref{eq:Extj}) and (\ref{eq:regFromExt}) we get that
\begin{equation}\label{eq:1st-regJzl}
\reg(J_{\z,l}) = \max_{\ll,s,\t}\left\{-|\ll|-m\cdot n + l^2 + s\cdot(m-n) + 2\cdot\left(\sum_{i=1}^{n-l} t_i\right)\right\},
\end{equation}
where $\ll,s,\t=(t_1,t_2,\cdots,t_{n-l},t_{n-l+1}=l)$ satisfy the conditions in Theorem~\ref{thm:ExtJzl}. Note that the condition $s\leq t_1$ together with the fact that $\ll$ is dominant implies
\begin{equation}\label{eq:lowerbound-s}
s-m\geq\ll_{s+1}\geq\ll_{t_1+1} = t_1 - z_n - m,\rm{ i.e. }s\geq t_1-z_n.
\end{equation}
Likewise, for $i=1,\cdots,n-l-1$,
\begin{equation}\label{eq:ineq-difft-diffz}
 t_i - z_{n+1-i} - m = \ll_{t_i+i} \geq \ll_{t_{i+1}+i+1} = t_{i+1} - z_{n-i} - m,\rm{ i.e. }t_{i+1} - t_i \leq z_{n-i} - z_{n+1-i}.
\end{equation}
Finally,
\begin{equation}\label{eq:ineq-lastdifftz}
 t_{n-l} - z_{l+1} - m = \ll_{t_{n-l}+n-l} \geq \ll_n \geq l - z_l - m,\rm{ i.e. }t_{n-l+1} - t_{n-l} = l - t_{n-l} \leq z_l - z_{l+1}.
\end{equation}
Note that conditions (\ref{eq:ineq-difft-diffz}) and (\ref{eq:ineq-lastdifftz}) are equivalent to the condition that $\t\in\T_l(\z)$. Moreover, if the conditions (\ref{eq:lowerbound-s})--(\ref{eq:ineq-lastdifftz}) are satisfied, then there exists at least one dominant weight $\ll$ satisfying the conditions in Theorem~\ref{thm:ExtJzl}, and the minimal such $\ll$ (i.e. the one for which $-|\ll|$ is maximal) is given by
\[\ll_1=\cdots=\ll_s=s-n,\]
\[\ll_{s+1}=\cdots=\ll_{t_1+1} = t_1 - z_n - m,\]
\[\ll_{t_i + i + 1} = \cdots = \ll_{t_{i+1} + i + 1} = t_{i+1} - z_{n-i} - m,\rm{ for }i=1,\cdots,n-l-1,\]
\[\ll_{t_{n-l} + n - l + 1} = \cdots = \ll_n = l - z_l - m.\]
Observe that the weight $\ll$ has size
\[|\ll| = s\cdot(s-n) + (t_1-s+1)\cdot(t_1-z_n-m) + \left(\sum_{i=1}^{n-l-1} (t_{i+1}-t_i+1)\cdot(t_{i+1}-z_{n-i}-m)\right) + (l-t_{n-l})\cdot(l-z_l-m).\]
The coefficient of $m$ in $|\ll|$ is given by
\[-\left(t_1-s+1 + \left(\sum_{i=1}^{n-l-1} (t_{i+1}-t_i+1)\right) + (l-t_{n-l})\right) = s-n,\]
so we can rewrite $|\ll|$ as
\[
\begin{aligned}
|\ll| = & (m+s)\cdot(s-n) + (t_1-s+1)\cdot(t_1-z_n) + \left(\sum_{i=1}^{n-l-1} (t_{i+1}-t_i+1)\cdot(t_{i+1}-z_{n-i})\right) + (l-t_{n-l})\cdot(l-z_l) \\
= & (m+s)\cdot(s-n) + (t_1-s)\cdot(t_1-z_n) + \left(\sum_{i=0}^{n-l-1}(t_{i+1}-z_{n-i})\right) + \left(\sum_{i=1}^{n-l} (t_{i+1}-t_i)\cdot(t_{i+1}-z_{n-i})\right)
\end{aligned}
\]
Using the fact that $-(m+s)\cdot(s-n)-m\cdot n + s\cdot(m-n) = -s^2$, we obtain
\[
\begin{aligned}
  & -|\ll|-m\cdot n + l^2 + s\cdot(m-n) + 2\cdot\left(\sum_{i=1}^{n-l} t_i\right) \\
  = & -s^2-(t_1-s)\cdot(t_1-z_n) + l^2 +  \left(\sum_{i=1}^{n-l} t_i\right) + \left(\sum_{i=l+1}^{n} z_i\right) - \left(\sum_{i=1}^{n-l} (t_{i+1}-t_i)\cdot(t_{i+1}-z_{n-i})\right) \\
\end{aligned}
\] 
According to (\ref{eq:1st-regJzl}), in order to obtain $\reg(J_{\z,l})$, we need to maximize the above quantity over all choices of $\t\in\T_l(\z)$ (see (\ref{eq:ineq-difft-diffz})--(\ref{eq:ineq-lastdifftz})) and all $s$ satisfying $\max(0,t_1-z_n)\leq s\leq t_1$ (see (\ref{eq:lowerbound-s})). Fixing $\t\in\T_l(\z)$, we note that
\[-s^2 + s\cdot(t_1-z_n)=s\cdot(t_1-z_n-s) \leq 0\]
when $\max(0,t_1-z_n)\leq s$, and equality is attained when $s=\max(0,t_1-z_n)$. We may thus assume that this is the case, and we get
\begin{equation}\label{eq:2nd-regJzl}
\reg(J_{\z,l}) = \max_{\t\in\T_l(\z)}\left\{-t_1\cdot(t_1-z_n) + l^2 + \left(\sum_{i=1}^{n-l} t_i\right) + \left(\sum_{i=l+1}^{n} z_i\right) - \left(\sum_{i=1}^{n-l} (t_{i+1}-t_i)\cdot(t_{i+1}-z_{n-i})\right) \right\}
\end{equation}
Comparing with (\ref{eq:def-flzt}) and using the fact that $t_{n-l+1}=l$, we get that
\[-t_1\cdot(t_1-z_n) -  \left(\sum_{i=1}^{n-l} (t_{i+1}-t_i)\cdot(t_{i+1}-z_{n-i})\right) = -f_l(\z,\t) - l^2 + l\cdot z_l.\]
Since $z_1=\cdots=z_l$, it follows that  $\left(\sum_{i=l+1}^{n} z_i\right)+l\cdot z_l = |\z|$. Moreover, $\sum_{i=1}^{n-l} t_i=|\t|-t_{n-l+1}=|\t|-l$, hence (\ref{eq:2nd-regJzl}) yields (\ref{eq:regJzl}), as desired.
\end{proof}

\subsection{$\Ext$-split filtrations}\label{subsec:Ext-split-filtrations}

Let $M$ denote a finitely generated graded $S$-module. In this section we will be concerned with analyzing the graded modules $\Ext^j_S(M,S)$ for $j\geq 0$. The reader may assume that $S=\bb{C}[x_{ij}]$ is the ring of polynomial functions on $m\times n$ complex matrices, and that $M$ is a $\GL$-equivariant $S$-module, in which case each $\Ext^j_S(M,S)$ is itself a $\GL$-representation. We fix a finite filtration of $M$ by graded $S$-submodules
\begin{equation}\label{eq:defMbullet}
 M_{\bullet}:\quad 0=M_0\subseteq M_1\subseteq\cdots\subseteq M_r=M.
\end{equation}

\begin{lemma}\label{lem:ABC}
 Let $A\overset{\phi}{\lra} B\overset{\psi}{\lra} C$ be an exact sequence of graded vector spaces (or $\GL$-representations) and assume that $A_d,B_d,C_d$ are finite dimensional in each degree $d$. We have that $B$ embeds into $A\oplus C$ as a graded vector subspace (or $\GL$-subrepresentation). Moreover, we have that $B\simeq A\oplus C$ if and only if the sequence $0\lra A\overset{\phi}{\lra} B\overset{\psi}{\lra} C\lra 0$ is exact.
\end{lemma}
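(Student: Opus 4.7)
The plan is to reduce everything to the standard fact that any short exact sequence of finite-dimensional graded vector spaces (resp.\ rational $\GL$-representations) splits: in the graded setting this is done degree by degree, while in the equivariant setting it follows from complete reducibility of rational $\GL$-representations applied in each (finite-dimensional) graded piece. Consequently, all the splittings produced below are automatically compatible with the grading and with the $\GL$-action.

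To construct the embedding $B\hookrightarrow A\oplus C$ I would proceed in two steps. First, the tautological sequence $0\to\ker(\psi)\to B\to\operatorname{Im}(\psi)\to 0$ splits, giving a decomposition $B\simeq\ker(\psi)\oplus\operatorname{Im}(\psi)$, and $\operatorname{Im}(\psi)$ is by definition a graded subspace of $C$. Second, exactness at $B$ gives $\ker(\psi)=\operatorname{Im}(\phi)$, and the sequence $0\to\ker(\phi)\to A\to\operatorname{Im}(\phi)\to 0$ splits as well, yielding an embedding $\operatorname{Im}(\phi)\hookrightarrow A$ as a graded (resp.\ $\GL$-equivariant) subspace. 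Combining these two identifications produces $B\simeq\operatorname{Im}(\phi)\oplus\operatorname{Im}(\psi)\hookrightarrow A\oplus C$, as required.

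For the ``moreover'' assertion, one direction is immediate: if $0\to A\overset{\phi}{\to}B\overset{\psi}{\to}C\to 0$ is short exact, then splitting it by the same complete reducibility argument gives $B\simeq A\oplus C$. For the converse, assume $B\simeq A\oplus C$ and compare graded dimensions (resp.\ multiplicities of irreducibles) degree by degree. Combining $\dim B_d=\dim A_d+\dim C_d$ with the identity $\dim B_d=\dim\operatorname{Im}(\phi)_d+\dim\operatorname{Im}(\psi)_d$ established above forces the inequalities $\dim\operatorname{Im}(\phi)_d\leq\dim A_d$ and $\dim\operatorname{Im}(\psi)_d\leq\dim C_d$ to be equalities in every degree, which is to say $\phi$ is injective and $\psi$ is surjective. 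Together with the assumed exactness at $B$, this is precisely the short exact sequence we wanted.

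There is no real obstacle here: the content is entirely that complete reducibility lets one split the canonical short exact sequences attached to $\phi$ and $\psi$, and dimension bookkeeping handles the rest. The only subtlety worth flagging is the need to invoke complete reducibility, rather than an arbitrary choice of vector-space splitting, so as to keep all decompositions $\GL$-equivariant in the representation-theoretic setting.
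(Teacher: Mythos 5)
Your proof is correct and follows essentially the same route as the paper: split the tautological sequences attached to $\psi$ and $\phi$ (degree by degree, using complete reducibility in the equivariant case) to get $B\simeq\operatorname{Im}(\phi)\oplus\operatorname{Im}(\psi)\hookrightarrow A\oplus C$, then settle the ``moreover'' part by a finite-dimensionality count forcing $\operatorname{Im}(\phi)\simeq A$ and $\operatorname{Im}(\psi)\simeq C$. The only difference is cosmetic: you make the dimension bookkeeping explicit where the paper simply invokes finite-dimensionality.
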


\begin{proof}
 We may assume that $A,B,C$ are concentrated in a single degree, so they are finite dimensional vector spaces (or $\GL$-representations). We get a short exact sequence $0\lra\ker(\psi)\lra B\lra\rm{Im}(\psi)\lra 0$. Since any such sequence splits, we have $B\simeq\ker(\psi)\oplus\rm{Im}(\psi)\simeq\rm{Im}(\phi)\oplus\rm{Im}(\psi)$. By choosing a splitting of the surjection $A\onto\rm{Im}(\phi)$ we can identify $\rm{Im}(\phi)$ with a subspace (subrepresentation) of $A$. This shows that $B$ embeds into $A\oplus C$. Since $A,B,C$ are finite dimensional, we get $B\simeq A\oplus C$ if and only if $\rm{Im}(\phi)\simeq A$ and $\rm{Im}(\psi)\simeq C$ if and only if $0\lra A\overset{\phi}{\lra} B\overset{\psi}{\lra} C\lra 0$ is exact.
\end{proof}

\begin{lemma}\label{lem:boundExt}
 For each $j\geq 0$, the graded vector space 
 \[\Ext^j_S(M,S)\rm{ embeds as a graded subspace of }\bigoplus_{i=0}^{r-1} \Ext^j_S(M_{i+1}/M_i,S).\]
 If $M_{\bullet}$ is a $\GL$-equivariant filtration then the embedding can be chosen to be $\GL$-equivariant.
\end{lemma}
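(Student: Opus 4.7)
The plan is to proceed by induction on the length $r$ of the filtration (\ref{eq:defMbullet}), using Lemma~\ref{lem:ABC} applied repeatedly to the standard short exact sequences extracted from $M_\bullet$. The base case $r=1$ is trivial, since then $M = M_1 = M_1/M_0$ and the asserted embedding is an equality.

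For the inductive step with $r\geq 2$, consider the short exact sequence
\[0 \lra M_{r-1} \lra M \lra M/M_{r-1} \lra 0,\]
which induces a long exact sequence of $\Ext$ modules. Extracting the three-term piece
\[\Ext^j_S(M/M_{r-1},S) \lra \Ext^j_S(M,S) \lra \Ext^j_S(M_{r-1},S),\]
we are in the setting of Lemma~\ref{lem:ABC}. All $\Ext$ modules appearing are finitely generated graded $S$-modules (since $S$ is Noetherian and each $M_i$ is a graded submodule of the finitely generated module $M$), and hence they are finite-dimensional in each graded degree. Lemma~\ref{lem:ABC} then provides a graded embedding
\[\Ext^j_S(M,S) \lhra \Ext^j_S(M/M_{r-1},S)\oplus \Ext^j_S(M_{r-1},S).\]
The inductive hypothesis applied to the shorter filtration $0=M_0\subseteq M_1\subseteq\cdots\subseteq M_{r-1}$ yields a graded embedding
\[\Ext^j_S(M_{r-1},S) \lhra \bigoplus_{i=0}^{r-2} \Ext^j_S(M_{i+1}/M_i,S).\]
Composing and using $M/M_{r-1}=M_r/M_{r-1}$ finishes the induction.

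For the equivariant statement, it suffices to observe that when $M_\bullet$ is a $\GL$-equivariant filtration, every map in the long exact sequence above is $\GL$-equivariant, and every graded piece of each $\Ext$ module is a finite-dimensional rational representation of the reductive group $\GL$. The splittings used in the proof of Lemma~\ref{lem:ABC} can therefore be chosen to be $\GL$-equivariant by complete reducibility, so the embedding produced is $\GL$-equivariant in each degree. There is no real obstacle here beyond correctly tracking finite-dimensionality and equivariance so that Lemma~\ref{lem:ABC} applies; the content of the lemma is essentially the iterated application of that splitting statement to the filtration.
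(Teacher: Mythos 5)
Your proof is correct and is essentially the paper's argument: both consist of iterating Lemma~\ref{lem:ABC} on the three-term exact sequences of $\Ext$ modules coming from the filtration, with finite-dimensionality in each degree and complete reducibility handling the graded and $\GL$-equivariant splittings. The only (immaterial) difference is that you induct on the length of the filtration by peeling off the top submodule $M_{r-1}$ via $0\to M_{r-1}\to M\to M/M_{r-1}\to 0$, whereas the paper runs a descending induction on the quotients $M/M_i$ via $0\to M_{i+1}/M_i\to M/M_i\to M/M_{i+1}\to 0$.
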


\begin{proof}
 We prove by descending induction on $i$ that $\Ext^j_S(M/M_i,S)$ embeds into $\bigoplus_{k=i}^{r-1} \Ext^j_S(M_{k+1}/M_k,S)$, the case $i=r-1$ being a direct consequence of the equality $M=M_r$. The exact sequence
 \[0\lra M_{i+1}/M_i \lra M/M_i \lra M/M_{i+1} \lra 0\]
 induces for each $j\geq 0$ an exact sequence of graded vector spaces ($\GL$-representations)
 \begin{equation}\label{eq:seq-Mquots}
 \Ext^j_S(M/M_{i+1},S)\lra \Ext^j_S(M/M_i,S)\lra\Ext^j_S(M_{i+1}/M_i,S).
 \end{equation}
 By Lemma~\ref{lem:ABC}, we get that $\Ext^j_S(M/M_i,S)$ embeds into $\Ext^j_S(M/M_{i+1},S)\oplus\Ext^j_S(M_{i+1}/M_i,S)$ and the desired conclusion follows by induction.
\end{proof}

The filtration $M_{\bullet}$ is said to be \defi{$\Ext$-split} if for each $j\geq 0$ we have an isomorphism of graded vector spaces
 \begin{equation}\label{eq:ExtM-iso}
 \Ext^j_S(M,S) \simeq \bigoplus_{i=0}^{r-1} \Ext^j_S(M_{i+1}/M_i,S).
 \end{equation}
 We note that we don't require that (\ref{eq:ExtM-iso}) is an isomorphism of $S$-modules. We also note that (\ref{eq:ExtM-iso}) is equivalent to the fact that the complex
 \begin{equation}\label{eq:ses-Mquots}
 0\lra\Ext^j_S(M/M_{i+1},S)\lra \Ext^j_S(M/M_i,S)\lra\Ext^j_S(M_{i+1}/M_i,S)\lra 0
 \end{equation}
is exact for each $j\geq 0$. If $M_{\bullet}$ is $\Ext$-split, we will refer to the subquotients $M_{i+1}/M_i$ as \defi{$\Ext$-factors} of $M$, noting that they depend on the filtration $M_{\bullet}$ and not just on the module $M$. We will mainly be concerned with the case when $M$ is a $\GL$-equivariant $S$-module, and the filtration (\ref{eq:defMbullet}) is itself $\GL$-equivariant. In this case, the factors $M_{i+1}/M_i$ are $\GL$-representations, as well as all the $\Ext$ modules in (\ref{eq:ExtM-iso}--\ref{eq:ses-Mquots}), while the maps between them are $\GL$-equivariant. 

\section{An $\Ext$-split filtration for $\GL$-equivariant thickenings}\label{sec:Ext-split-thick}

Let $S$ be the ring of polynomial functions on $m\times n$ matrices, and let $I\subseteq S$ denote a $\GL$-invariant ideal. In this section we construct a $\GL$-equivariant $\Ext$-split filtration of $S/I$ (as defined in Section~\ref{subsec:Ext-split-filtrations}), all of whose factors are of the form $J_{\z,l}$ for some partition $\z$ and some non-negative integer~$l$. Using the results from Section~\ref{subsec:ExtJzl}, this will allow us to compute the $\GL$-structure of the modules $\Ext^j_S(S/I,S)$ and in particular to detect the regularity of $I$. To parametrize the factors of the $\Ext$-split filtration, we use the following definition.

\begin{definition}\label{def:ZX}
 For $\mc{X}\subset\P_n$ a finite subset we define $\mc{Z}(\mc{X})$ to be the set consisting of pairs $(\z,l)$ where $\z\in\P_n$ and $l\geq 0$ are such that if we write $c=z_1$ then the following hold:
 \begin{enumerate}
  \item There exists a partition $\ul{x}\in\mc{X}$ such that $\ul{x}(c)\leq\z$ and $x'_{c+1}\leq l+1$.
  \item For every partition $\ul{x}\in\mc{X}$ satisfying (1) we have $x'_{c+1}=l+1$.
 \end{enumerate}
\end{definition}

The main result of this section is then the following.

\begin{theorem}\label{thm:Ext-split-IX}
 Let $\X\subseteq\P_n$ and let $I_{\X}\subseteq S$ denote the associated $\GL$-invariant ideal. There exists a $\GL$-equivariant $\Ext$-split filtration of $S/I_{\X}$, whose factors are the modules $J_{\z,l}$ for $(\z,l)\in\Z(\X)$, and therefore we have for each $j\geq 0$ a $\GL$-equivariant isomorphism of graded vector spaces
 \begin{equation}\label{eq:ExtS/IX}
  \Ext^j_S(S/I_{\X},S) \simeq \bigoplus_{(\ul{z},l)\in\Z(\X)}\Ext^{j}_S(J_{\z,l},S).
 \end{equation}
 In particular, if $I_{\X}\neq S$ we get
 \begin{equation}\label{eq:regIX}
  \reg(I_{\X}) = 1 + \reg(S/I_{\X}) = 1 + \max_{(\ul{z},l)\in\Z(\X)}\reg(J_{\z,l}).
 \end{equation}
 Moreover, if $\X,\Y\subset\P_n$ are such that $I_{\X}\subseteq I_{\Y}$, then the natural surjection $S/I_{\X}\onto S/I_{\Y}$ induces maps $\Ext^j_S(S/I_{\Y},S)\lra\Ext^j_S(S/I_{\X},S)$ for all $j\geq 0$, whose (co)kernels and images can be described via
 \begin{equation}\label{eq:kerExt}
 \ker\left(\Ext^j_S(S/I_{\Y},S)\lra\Ext^j_S(S/I_{\X},S)\right) \simeq \bigoplus_{(\ul{z},l)\in\Z(\Y)\setminus\Z(\X)}\Ext^{j}_S(J_{\z,l},S),
 \end{equation}
 \begin{equation}\label{eq:imExt}
 \operatorname{Im}\left(\Ext^j_S(S/I_{\Y},S)\lra\Ext^j_S(S/I_{\X},S)\right) \simeq \bigoplus_{(\ul{z},l)\in\Z(\Y)\cap\Z(\X)}\Ext^{j}_S(J_{\z,l},S),
 \end{equation}
 \begin{equation}\label{eq:cokerExt}
 \coker\left(\Ext^j_S(S/I_{\Y},S)\lra\Ext^j_S(S/I_{\X},S)\right) \simeq \bigoplus_{(\ul{z},l)\in\Z(\X)\setminus\Z(\Y)}\Ext^{j}_S(J_{\z,l},S).
 \end{equation}
 Finally, recall that the saturation of $I_{\X}$ with respect to $I_p$ is given by $I_{\X^{:p}}$ (see (\ref{eq:p-saturation}), Lemma~\ref{lem:saturation}). We have
 \begin{equation}\label{eq:Z-saturation}
 \Z(\X^{:p}) = \{(\z,l)\in\Z(\X): l\geq p\} \subseteq \Z(\X).
 \end{equation}
 In particular, if we apply (\ref{eq:kerExt}) to the inclusion $I_{\X}\subseteq I_{\X^{:p}}$ we obtain for each $j\geq 0$ injective maps
 \[\Ext^j_S(S/I_{\X^{:p}},S) \lra \Ext^j_S(S/I_{\X},S).\]
\end{theorem}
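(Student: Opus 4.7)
The plan is to construct an explicit $\GL$-equivariant filtration of $S/I_{\X}$ whose successive quotients are the modules $J_{\z, l}$ indexed by $(\z, l) \in \Z(\X)$, prove it is $\Ext$-split by exploiting the explicit description of $\Ext^{\bullet}_S(J_{\z, l}, S)$ from Theorem~\ref{thm:ExtJzl}, and deduce the remaining assertions as corollaries.

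First, I would construct the filtration via an ascending chain of $\GL$-invariant ideals $I_{\X} = J_0 \subsetneq J_1 \subsetneq \cdots \subsetneq J_r = S$, built inductively. At each step, pick a partition $\z_i$ minimal (in the componentwise order) among those with $S_{\z_i} \subseteq S/J_i$, and let $l_i$ be the unique integer for which $I_{\mathfrak{succ}(\z_i, l_i)} \subseteq J_i \subseteq I_{\Y_{\z_i, l_i}}$; the existence of such an $l_i$ follows from the minimality of $\z_i$ together with the explicit description of $\Y_{\z_i, l_i}$ in (\ref{eq:def-Yzl}). Setting $J_{i+1} = J_i + I_{\z_i}$, the Second Isomorphism Theorem combined with Lemma~\ref{lem:Jzl=Iz-cap-IY} yields
\[J_{i+1}/J_i \simeq I_{\z_i}/(I_{\z_i} \cap J_i) = I_{\z_i}/I_{\mathfrak{succ}(\z_i, l_i)} = J_{\z_i, l_i},\]
so $M_i := J_i/I_{\X}$ provides the desired $\GL$-equivariant filtration. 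A careful combinatorial check matching this inductive recipe with Definition~\ref{def:ZX} shows that the set of pairs $(\z_i, l_i)$ produced is exactly $\Z(\X)$, independent of the order in which minimal $\z_i$'s are chosen.

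Second, I would verify the $\Ext$-split property. By Lemma~\ref{lem:ABC}, this reduces to showing that the connecting homomorphism
\[\delta: \Ext^{j-1}_S(J_{\z_i, l_i}, S) \longrightarrow \Ext^{j}_S(M/M_{i+1}, S)\]
coming from $0 \to M_{i+1}/M_i \to M/M_i \to M/M_{i+1} \to 0$ vanishes for each $i, j$. The main obstacle is the following disjointness claim: the irreducible $\GL$-constituents of $\Ext^{\bullet}_S(J_{\z, l}, S)$ and $\Ext^{\bullet}_S(J_{\z', l'}, S)$ share no isotypic component whenever $(\z, l) \neq (\z', l')$. I expect this to follow from the rigidity built into Theorem~\ref{thm:ExtJzl}: given a highest weight $(\ll(s), \ll)$ appearing in some $\Ext^{\bullet}_S(J_{\z, l}, S)$, one reads off $l$ from $\ll_n$ via (\ref{eq:lln=zl-m}), then the parameters $s$ and $\t$ from the jump positions of $\ll$, and finally each $z_{n+1-i}$ from the relations $\ll_{t_i + i} = t_i - z_{n+1-i} - m$ in (\ref{eq:lam-in-W}). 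Granted this disjointness, the $\GL$-equivariance of $\delta$ and the inductive description $\Ext^{j}_S(M/M_{i+1}, S) \simeq \bigoplus_{k > i} \Ext^{j}_S(J_{\z_k, l_k}, S)$ force $\delta = 0$, hence (\ref{eq:ses-Mquots}) is short exact and (\ref{eq:ExtS/IX}) follows.

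Finally, the remaining claims fall out formally. The regularity formula (\ref{eq:regIX}) is immediate from (\ref{eq:regFromExt}) and (\ref{eq:ExtS/IX}). For (\ref{eq:kerExt})--(\ref{eq:cokerExt}), I would refine the filtration of $S/I_{\X}$ so that the kernel $I_{\Y}/I_{\X}$ of the surjection $S/I_{\X} \onto S/I_{\Y}$ appears as one of the $M_i$; by the same disjointness of $\GL$-types, the induced map on $\Ext$ becomes the projection of $\bigoplus_{(\z, l) \in \Z(\Y)} \Ext^j_S(J_{\z, l}, S)$ onto the summands indexed by $\Z(\Y) \cap \Z(\X)$, with kernel indexed by $\Z(\Y) \setminus \Z(\X)$ and cokernel indexed by $\Z(\X) \setminus \Z(\Y)$. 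The saturation formula (\ref{eq:Z-saturation}) follows by combining Lemma~\ref{lem:saturation} with a combinatorial check of Definition~\ref{def:ZX} applied to $\X^{:p}$: since every partition in $\X^{:p}$ has all its columns of height $> p$ by (\ref{eq:p-saturation}), condition (2) forces $l+1 = x'_{c+1} > p$, i.e., $l \geq p$; this is consistent with Corollary~\ref{cor:Ann-Jzl}, which identifies the factors $J_{\z, l}$ with $l \geq p$ as precisely those not killed by $I_p$-saturation.
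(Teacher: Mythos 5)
Your overall architecture (filtration with factors $J_{\z,l}$, vanishing of connecting maps by $\GL$-equivariance, formal deduction of the remaining claims) matches the paper's, but the central step rests on a disjointness claim that is false as stated, and the argument you sketch for it does not work. You claim that $\Ext^{\bullet}_S(J_{\z,l},S)$ and $\Ext^{\bullet}_S(J_{\z',l'},S)$ share no irreducible constituent when $(\z,l)\neq(\z',l')$, to be proved by reading off the parameters from a weight $\ll$. But (\ref{eq:lln=zl-m}) gives $\ll_n=l-z_l-m$, which determines only the difference $l-z_l$, not $l$ itself; and the positions $t_i+i$ cannot be recovered from $\ll$, since $\ll$ is typically constant across them (the entries of $\ll$ away from the positions $t_i+i$ are free subject to dominance). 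A concrete counterexample with $m=n=3$: the representation $S_{(-3,-3,-3)}\bb{C}^3\oo S_{(-3,-3,-3)}\bb{C}^3$ occurs both in $\Ext^4_S(J_{(1,1,1),1},S)$ (via $\t=(1,1)$, $s=0$) and in $\Ext^6_S(J_{(1,1,0),1},S)$ (via $\t=(0,1)$, $s=0$). What the $\Ext$-splitness argument actually requires is the weaker statement that $\Ext^j_S(J_{\z,l},S)$ and $\Ext^{j+1}_S(J_{\y,u},S)$ share no constituent for the relevant pairs, i.e.\ disjointness in \emph{consecutive} cohomological degrees; the paper's Lemma~\ref{lem:conn-hom-Yzl-0} proves exactly this, and its proof hinges on the parity constraint $j\equiv m\cdot n-l^2-s\cdot(m-n)\pmod 2$ coming from (\ref{eq:Extj}) (note that in the counterexample above the two degrees differ by $2$, so parity is what saves the day). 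This parity mechanism is entirely absent from your proposal, and without it the vanishing of $\delta$ is not established. A second, related issue: the paper does not prove surjectivity of $\a:\Ext^j_S(M/M_i,S)\to\Ext^j_S(M_{i+1}/M_i,S)$ directly from the filtration, but by factoring through the distinguished quotient $S/I_{\Y_{\z,l}}$ and using Corollary~\ref{cor:Jzl-sub-IY}; your induction implicitly assumes the decomposition of $\Ext^j_S(M/M_{i+1},S)$ before splitness is known for the shorter filtration, which needs to be organized carefully.

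The deduction of (\ref{eq:kerExt})--(\ref{eq:cokerExt}) has a further gap. Even if one knew that each irreducible occurs in at most one summand of each side in a fixed degree $j$, a $\GL$-equivariant map could still act by zero on a constituent common to $\bigoplus_{\Z(\Y)}$ and $\bigoplus_{\Z(\X)}$; one must show that the induced map restricts to an \emph{isomorphism} on the summand indexed by each $(\z,l)\in\Z(\Y)\cap\Z(\X)$ and to \emph{zero} on those indexed by $\Z(\Y)\setminus\Z(\X)$. The paper achieves this through the functorially defined subspaces $\E^j_{\z,l}(\Y)=\pi^j_{\z,l,\Y}(\E^j_{\z,l})$ and the two nontrivial Lemmas~\ref{lem:zl-in-ZY} and~\ref{lem:zl-notin-ZY}; the latter (showing $\E^j_{\z,l}(\Y)=0$ when $(\z,l)\notin\Z(\Y)$) requires interposing the auxiliary ideal $I_{\X}$ with $\X=\{((z_i+1)^i):i>l+1\}$ and a weight comparison via (\ref{eq:lln=zl-m}); nothing in your proposal substitutes for this. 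The filtration construction itself and the saturation statement (\ref{eq:Z-saturation}) are in reasonable shape as sketches (the latter still needs the converse inclusion, not just $l\geq p$), but the two gaps above are where the real content of the theorem lies.
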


We begin by recording a couple of useful remarks regarding properties of the set $\Z(\X)$.

\begin{remark}\label{rem:ZdependsonI}
 If the sets $\X,\Y\subset\P_n$ are such that $I_{\X}=I_{\Y}$ then $\Z(\X)=\Z(\Y)$. Using (\ref{eq:inclIxIy}), this is equivalent to the assertion that $\Z(\X)$ depends on the set of minimal partitions in $\X$ rather than on $\X$ itself, which follows directly from Definition~\ref{def:ZX}. 
\end{remark}

\begin{remark}\label{rem:z1--zl+1}
 If $(\z,l)\in\Z(\X)$ then $z_1=\cdots=z_{l+1}$. To see this, let $c=z_1$ and consider any $\x\in\X$ with $\x(c)\leq\z$ and $x'_{c+1}=l+1$. We get that $x_{l+1}\geq c+1$ and therefore $\x(c)_1=\x(c)_2=\cdots=\x(c)_{l+1}=c$. Since $z_1=c$ and $\z\geq\x(c)$, it follows that $z_1=\cdots=z_{l+1}=c$.
\end{remark}

\begin{remark}\label{rem:y-in-ZX}
If $(\z,l)\in\Z(\X)$ then $l$ is uniquely determined by $\z$: writing $c=z_1$, $l$ is the minimum value of $x'_{c+1}-1$, where $\x$ runs over partitions in $\X$ satisfying $x_1>c$ and $\x(c)\leq\z$. It follows that if $(\z,l)\in\Z(\X)$ and $\y\geq\z$, $y_1=z_1$, then either there exists $\x\in\X$ with $x_1\leq c$ and $\x=\x(c)\leq\y$, in which case $I_{\y}\subseteq I_{\X}$, or there exists $u\leq l$ such that $(\y,u)\in\Z(\X)$.
\end{remark}

We begin our proof by verifying (\ref{eq:Z-saturation}), in order to give some more insight into Definition~\ref{def:ZX}.

\begin{proof}[Proof of~(\ref{eq:Z-saturation})]
 Suppose first that $(\z,l)\in\Z(\X^{:p})$ and write $c=z_1$. By Remark~\ref{rem:y-in-ZX}, we have that $l=y'_{c+1}-1$ for some $\y\in\X^{:p}$; since for every $\y\in\X^{:p}$ and for every non-zero $y'_i$ we have $y'_i>p$, it follows that $l\geq p$. It remains to show that $(\z,l)\in\Z(\X)$ and to do so we verify the two conditions in Definition~\ref{def:ZX}. Let $\y\in\X^{:p}$ be such that $\z\geq\y(c)$, $y'_{c+1}=l+1>0$, and write $\y=\x(d)$ for $\x\in\X$ and $d\in\bb{Z}_{\geq 0}$ such that $x'_d>p$, $x'_{d+1}\leq p$. Since $\y$ is obtained from $\x$ by removing all but its first $d$ columns, it must be that $d\geq c+1$. Therefore $\x(c)=\y(c)$ and $x'_{c+1}=y'_{c+1}$, so $\x$ satisfies condition (1) in Definition~\ref{def:ZX}. To check condition (2), assume that $\z\geq\x(c)$ and that $x'_{c+1}\leq l+1$ for some $\x\in\X$. Letting $d\in\bb{Z}_{\geq 0}$ be such that $x'_d>p$, $x'_{d+1}\leq p$, and defining $\y=\x(d)\in\X^{:p}$, we see that $\y(c)\leq\x(c)\leq\z$ and that $y'_{c+1}\leq x'_{c+1}\leq l+1$. Since $(\z,l)\in\Z(\X^{:p})$ we must have $y'_{c+1}=l+1$ and thus $x'_{c+1}=l+1$, which verifies condition (2) as desired.
 
 Suppose now that $(\z,l)\in\Z(\X)$ and that $l\geq p$, let $c=z_1$ and consider any $\x\in\X$ such that $\z\geq\x(c)$ and $x'_{c+1}=l+1>p$. Letting $d\in\bb{Z}_{\geq 0}$ be such that $x'_d>p$ and $x'_{d+1}\leq p$, we see that $d\geq c+1$. We have by (\ref{eq:p-saturation}) that $\y=\x(d)\in\X^{:p}$. Since $c<d$ we have $\y(c)=\x(c)$ and therefore $\z\geq\y(c)$. Moreover, since $\y\leq\x$ we get $y'_{c+1}\leq x'_{c+1}=l+1$ so in order to prove that $(\z,l)\in\Z(\X^{:p})$ it remains to verify condition~(2) in Definition~\ref{def:ZX}. Assume that $\y\in\X^{:p}$ is such that $\z\geq\y(c)$ and $y'_{c+1}\leq l+1$: we need to check that $y'_{c+1}=l+1$. Using (\ref{eq:p-saturation}), we can find $\x\in\X$ such that $\y=\x(d)$ where $d\in\bb{Z}_{\geq 0}$ satisfies $x'_d>p$ and $x'_{d+1}\leq p$. We have two possibilities:
 \begin{itemize}
  \item $c\geq d$: we have $\z\geq\y(c)=\x(d)(c)=\x(d)$. Since $x'_{d+1}\leq p<l+1$ it follows that $x'_i < l+1$ for all $i>d$. By Remark~\ref{rem:z1--zl+1} we know that $z_1=\cdots=z_{l+1}=c$ so $z'_i\geq l+1 \geq x'_i$ for $i=d+1,\cdots,c$, which together with the fact that $\z\geq\x(d)$ shows that $\z\geq\x(c)$. Since $\x\in\X$ and $x'_{c+1}<l+1$, this contradicts the fact that $(\z,l)\in\Z(\X)$.
  
  \item $c<d$: it follows that $\z\geq\y(c)=\x(c)$ and $l+1\geq y'_{c+1}=x'_{c+1}$. Since $(\z,l)\in\Z(\X)$ and $\x\in\X$, we must have $l+1=x'_{c+1}$ and therefore $l+1=y'_{c+1}$, as desired.\qedhere
 \end{itemize}
\end{proof}

To prove the remaining parts of Theorem~\ref{thm:Ext-split-IX}, we proceed in several steps:
\begin{enumerate}
 \item For every $(\z,l)$ where $\z\in\P_n$ is a partition satisfying $z_1=\cdots=z_{l+1}$ (see Remark~\ref{rem:z1--zl+1}), we show that the exact sequence (which exists by Corollary~\ref{cor:Jzl-sub-IY})
 \[0\lra J_{\z,l} \lra \frac{S}{I_{\Y_{\z,l}}} \lra \frac{S}{I_{\z}+I_{\Y_{\z,l}}} \lra 0\]
 induces for each $j\geq 0$ surjective maps at the level of $\Ext$ modules
 \begin{equation}\label{eq:surj-phi^j_zl}
 \phi^j_{\z,l}:\Ext^j_S(S/I_{\Y_{\z,l}},S) \onto \Ext^j_S(J_{\z,l},S).
 \end{equation}
 
 \item The map $\phi^j_{\z,l}$ in (1) is $\GL$-equivariant, so there exists a $\GL$-equivariant splitting for it (in the category of graded vector spaces, not in that of $S$-modules!). We choose a $\GL$-subrepresentation 
 \begin{equation}\label{eq:def-Ejzl}
 \E^j_{\z,l}\subseteq \Ext^j_S(S/I_{\Y_{\z,l}},S)
 \end{equation}
with the property that $\phi^j_{\z,l}$ maps $\E^j_{\z,l}$ isomorphically onto $\Ext^j_S(J_{\z,l},S)$. For every ideal $I_{\Y}\subseteq I_{\Y_{\z,l}}$ we get a natural surjection $S/I_{\Y}\onto S/I_{\Y_{\z,l}}$ and induced maps for each $j\geq 0$
\begin{equation}\label{eq:def-pijzl}
\pi^j_{\z,l,\Y}:\Ext^j_S(S/I_{\Y_{\z,l}},S)\lra\Ext^j_S(S/I_{\Y},S).
\end{equation}
We define
\begin{equation}\label{eq:def-Ejzl(Y)}
\E^j_{\z,l}(\Y) = \pi^j_{\z,l,\Y} (\E^j_{\z,l})\subseteq\Ext^j_S(S/I_{\Y},S)
\end{equation}
and note that this assignment is functorial in the sense that if $I_{\X}\subseteq I_{\Y}\subseteq I_{\Y_{\z,l}}$ and $\pi:S/I_{\X}\onto S/I_{\Y}$ is the corresponding quotient map, then for $j\geq 0$ the induced map
\begin{equation}\label{eq:pi^j-YtoX}
\pi^j_{\Y\to\X}:\Ext^j_S(S/I_{\Y},S) \lra \Ext^j_S(S/I_{\X},S)
\end{equation}
has the property that $\pi^j_{\z,l,\X} = \pi^j_{\Y\to\X}\circ\pi^j_{\z,l,\Y}$, thus
\begin{equation}\label{eq:E^j-YtoX}
\E^j_{\z,l}(\X) = \pi^j_{\Y\to\X}(\E^j_{\z,l}(\Y)).
\end{equation}
We prove that
\begin{equation}\label{eq:E^j_zl-Y}
 \E^j_{\z,l}(\Y)\simeq\begin{cases}
  \E^j_{\z,l} & \rm{when }(\z,l)\in\Z(\Y), \\
  0 & \rm{otherwise}.
 \end{cases}
\end{equation}

\item Finally, we prove that for every ideal $I_{\Y}$ and every $j\geq 0$
\begin{equation}\label{eq:Ext^j=sum-E^j}
\Ext^j_S(S/I_{\Y},S) = \bigoplus_{(\z,l)\in\Z(\Y)} \E^j_{\z,l}(\Y).
\end{equation}
Implicit in this equation is the fact that if $(\z,l)\in\Z(\Y)$ then $I_{\Y}\subseteq I_{\Y_{\z,l}}$, so that $\E^j_{\z,l}(\Y)$ is defined!
\end{enumerate}

\begin{proof}[Proof of Theorem~\ref{thm:Ext-split-IX}]
 The statement about the existence of the $\Ext$-split filtration is proved in Corollary~\ref{cor:filtration-S/IX} and Proposition~\ref{prop:Ext-split-S/IX} below. Equation (\ref{eq:ExtS/IX}) follows from (\ref{eq:E^j_zl-Y}), (\ref{eq:Ext^j=sum-E^j}), and from the fact that $\E^j_{\z,l}$ is isomoprhic to $\Ext^j_S(J_{\z,l},S)$ via the map $\phi^j_{\z,l}$ in (\ref{eq:surj-phi^j_zl}). To prove equations (\ref{eq:kerExt}--\ref{eq:cokerExt}) we observe that by (\ref{eq:E^j-YtoX}), the map $\pi^j_{\Y\to\X}$ is compatible with the decomposition (\ref{eq:Ext^j=sum-E^j}). Moreover, it follows from (\ref{eq:E^j_zl-Y}) that
 \begin{itemize}
 \item when $(\z,l)\in\Z(\X)\cap\Z(\Y)$, the map $\pi^j_{\Y\to\X}$ is in fact an isomorphism between $\E^j_{\z,l}(\Y)$ and $\E^j_{\z,l}(\X)$: this is because it is surjective by (\ref{eq:E^j-YtoX}), the graded vector spaces $\E^j_{\z,l}(\Y)$ and $\E^j_{\z,l}(\X)$ are abstractly isomorphic (they are isomorphic to $\E^j_{\z,l}$), and they are finite dimensional in each degree.
 \item when $(\z,l)\in\Z(\Y)$ but $(\z,l)\not\in\Z(\X)$, $\pi^j_{\Y\to\X}(\E^j_{\z,l}(\Y))=\E^j_{\z,l}(\X)=0$.
 \end{itemize}
 These observations allow us to conclude that
 \[\ker\left(\pi^j_{\Y\to\X}\right) = \bigoplus_{(\ul{z},l)\in\Z(\Y)\setminus\Z(\X)}\E^j_{\z,l}(\Y),\quad \operatorname{Im}\left(\pi^j_{\Y\to\X}\right) = \bigoplus_{(\ul{z},l)\in\Z(\Y)\cap\Z(\X)}\E^j_{\z,l}(\Y),\rm{ and}\]
 \[\coker\left(\pi^j_{\Y\to\X}\right) = \bigoplus_{(\ul{z},l)\in\Z(\X)\setminus\Z(\Y)}\E^j_{\z,l}(\X),\]
 which yield (\ref{eq:kerExt}--\ref{eq:cokerExt}) by applying (\ref{eq:E^j_zl-Y}) one more time and using the isomorphism $\E^j_{\z,l}\simeq\Ext^j_S(J_{\z,l},S)$. 
\end{proof}

We begin by constructing a filtration of $S/I_{\X}$ whose successive quotients are the modules $J_{\z,l}$ for $(\z,l)\in\Z(\X)$. This will be achieved in Corollary~\ref{cor:filtration-S/IX}. For every $c\geq 0$, we associate to $\X$ the set
\begin{equation}\label{def:Xc}
\X(c)=\{\x(c) : \x\in\X\}
\end{equation}
and note that $\X(0)=\{\ul{0}\}$ consists only of the empty partition, and that $\X(c)=\X$ for sufficiently large values of $c$. Moreover, the $\GL$-invariant ideals associated to the sets $\X(c)$ form an eventually constant descending chain
\begin{equation}\label{eq:filtration-IXc}
S=I_{\X(0)}\supseteq I_{\X(1)}\supseteq\cdots\supseteq I_{\X(c)}\supseteq\cdots\supseteq I_{\X}=I_{\X}=\cdots
\end{equation}
We write
\begin{equation}\label{eq:decomposeXc}
 \X(c) = \X(\leq c) \cup \X(>c),\rm{ where}
\end{equation}
\begin{equation}\label{eq:X<=cX>c}
 \X(\leq c) = \{\x(c):\x\in\X\rm{ and }x_1\leq c\} \quad\rm{and}\quad \X(> c) = \{\x(c):\x\in\X\rm{ and }x_1>c\},
\end{equation}
and note that $\X(\leq c)\subseteq\X$ because $\x(c)=\x$ when $x_1\leq c$, and also that $\X(\leq c)\subseteq\X(\leq (c+1))\subseteq\X(c+1)$.

\begin{lemma}\label{lem:filtration-IXc/IXc+1}
 For every $c\geq 0$, the module $M=I_{\X(c)}/I_{\X(c+1)}$ admits a $\GL$-equivariant filtration $0=M_0\subset M_1\subset\cdots$ where the successive quotients $M_{i}/M_{i-1}$ are precisely the modules $J_{\z,l}$ with $(\z,l)\in\Z(\X)$ and $z_1=c$.
\end{lemma}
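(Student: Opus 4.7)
The plan is to construct the filtration explicitly by ordering the pairs $(\z,l)\in\Z(\X)$ with $z_1=c$ and adjoining the ideals $I_\z$ modulo $I_{\X(c+1)}$ one at a time. As preparation I would identify, via (\ref{eq:dec-Ix}), the $\GL$-decomposition of $M$ into irreducible components $S_\y$ indexed by partitions $\y$ satisfying $\y\geq\x(c)$ for some $\x\in\X$ and $\y\not\geq\x(c+1)$ for every $\x\in\X$. To each such $\y$ I associate
\[ L(\y)=\min\{x'_{c+1}:\x\in\X,\ \x(c)\leq\y\},\qquad l=L(\y)-1, \]
together with $\z$ defined by $z_1=\cdots=z_{l+1}=c$ and $z_i=y_i$ for $i>l+1$. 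A direct check, using that $\y\not\geq\x(c+1)$ forces $x'_{c+1}\geq y'_{c+1}+1$ whenever $\x(c)\leq\y$, shows that $(\z,l)\in\Z(\X)$, that $y_{l+1}=c$, and that $S_\y$ is a component of $J_{\z,l}$ (whose $\GL$-decomposition consists of the $\y'\geq\z$ with $y'_i=z_i$ for $i>l$). This yields a bijection between the components of $M$ and the components of the modules $J_{\z,l}$ as $(\z,l)$ ranges over pairs in $\Z(\X)$ with $z_1=c$.

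Next I would enumerate these pairs as $(\z_1,l_1),\ldots,(\z_r,l_r)$ using any total order refining the reverse partial order on the $\z_i$'s (so that $\z_i\leq\z_j$ forces $i\geq j$), and set
\[ M_i=\bigl(\textstyle\sum_{j\leq i}I_{\z_j}+I_{\X(c+1)}\bigr)\big/I_{\X(c+1)}\;\subseteq\; M. \]
This chain is $\GL$-equivariant, starts at $0$, and terminates at $M$ by the bijection above. By the standard isomorphism theorem,
\[ M_i/M_{i-1}\;\cong\; I_{\z_i}\big/\bigl(I_{\z_i}\cap(\textstyle\sum_{j<i}I_{\z_j}+I_{\X(c+1)})\bigr), \]
so the whole lemma reduces to the ideal identity
\begin{equation*}
 I_{\z_i}\cap\bigl(\textstyle\sum_{j<i}I_{\z_j}+I_{\X(c+1)}\bigr)=I_{\mf{succ}(\z_i,l_i)}.
\end{equation*}

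The main obstacle is this identity, which I would verify isotypically since both sides are $\GL$-invariant direct sums of whole components $S_\y$. For $\supseteq$, I would take $\y\in\mf{succ}(\z_i,l_i)$ and split on $y_{l_i+1}$: when $y_{l_i+1}>c$, condition~(1) of Definition~\ref{def:ZX} supplies $\x\in\X$ with $\x(c)\leq\z_i$ and $x'_{c+1}=l_i+1$, and a direct coordinate comparison yields $\y\geq\x(c+1)\in I_{\X(c+1)}$; when $y_{l_i+1}=c$ and $\y\not\geq\x(c+1)$ for every $\x$, the bijection from the first step produces $(\z^*,l^*)\in\Z(\X)$ with $\z^*>\z_i$ strictly (here $l^*\leq l_i$ because the $\x$ above satisfies $\x(c)\leq\y$), and the chosen ordering forces $\z^*=\z_j$ with $j<i$, so $\y\geq\z_j$. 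For $\subseteq$, a coordinate-wise argument shows that if $\y\geq\z_i$ and $\y\geq\z_j$ for some $j<i$, then Remark~\ref{rem:z1--zl+1} forces any position where $\z_j$ strictly exceeds $\z_i$ to lie above $l_i+1$, placing $\y$ in $\mf{succ}(\z_i,l_i)$; while if $\y\geq\x(c+1)$ for some $\x\in\X$, then either $x'_{c+1}\geq l_i+1$ gives $y_{l_i+1}\geq c+1$ directly, or $x'_{c+1}\leq l_i$ and condition~(2) of Definition~\ref{def:ZX} forces $\x(c)\not\leq\z_i$, locating a position $s>l_i+1$ at which $y_s\geq\x(c)_s>(z_i)_s$. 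The consistent exploitation of conditions~(1) and~(2) of Definition~\ref{def:ZX} is what rules out the a priori troublesome configurations and makes these case analyses close.
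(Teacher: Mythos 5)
Your proof is correct and follows essentially the same route as the paper's: the same filtration by the partial sums $\sum_{j\leq i}I_{\z_j}$ modulo $I_{\X(c+1)}$, the same ordering convention on the pairs, and the same reduction to the identity $I_{\z_i}\cap\bigl(\sum_{j<i}I_{\z_j}+I_{\X(c+1)}\bigr)=I_{\mf{succ}(\z_i,l_i)}$, verified by the same kind of casework on conditions (1) and (2) of Definition~\ref{def:ZX}. The only cosmetic difference is that you set up the component-level bijection $\y\mapsto(\z,l)$ explicitly at the outset and use it both for exhaustion and in one branch of the ``$\supseteq$'' inclusion, where the paper instead argues exhaustion via minimal elements of $\X(c)$ and invokes Remark~\ref{rem:y-in-ZX} at the corresponding point.
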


\begin{proof}
 We first chose an enumeration of the elements $(\z,l)\in\Z(\X)$ with $z_1=c$
\begin{equation}\label{eq:order-zi}
(\z^1,l^1),(\z^2,l^2),\cdots,(\z^r,l^r),
\end{equation} 
having the property that $\z^i\leq\z^j$ can only happen when $i\geq j$. This can be achieved for instance by requiring that $|\z^i|$ is non-increasing as a function of $i$.
 
 We then observe that for every $(\z,l)\in\Z(\X)$ with $z_1=c$ there exists a partition $\x\in\X$ with $\x(c)\leq\z$ and therefore $I_{\z}\subseteq I_{\x(c)}\subseteq I_{\X(c)}$. We define $M_i\subseteq M$ to be the image of the composition
 \[I_{\z^1}+I_{\z^2}+\cdots+I_{\z^i}\subseteq I_{\X(c)} \onto M,\]
 so that
 \begin{equation}\label{eq:Mi-quot}
 M_i = \frac{I_{\z^1}+I_{\z^2}+\cdots+I_{\z^i} + I_{\X(c+1)}}{I_{\X(c+1)}} \subseteq M.
 \end{equation}
It is clear that each $M_i$ is a $\GL$-equivariant submodule of $M$. We have to check that $M_i/M_{i-1}=J_{\z^i,l^i}$ and that $M_r=M$. 

To prove the equality $M_r=M$, it is enough to check that for every minimal element $\z\in\X(c)$ (with respect to $\geq$) we have that either $\z=\z^i$ for some $i$, or $I_{\z}\subseteq I_{\X(c+1)}$. Consider then any minimal $\z\in\X(c)$: if $\z\in\X(\leq c)$ then since $\X(\leq c)\subseteq\X(c+1)$, we get $I_{\z}\subseteq I_{\X(c+1)}$. We may thus assume that $\z\in\X(>c)\setminus\X(\leq c)$ and therefore $\z=\x(c)$ for some $\x\in\X$ with $x_1>c$; we may assume further that $\x$ is chosen to have a minimal value for $x'_{c+1}$ (which is necessarily $>0$ since $x_1>c$). We let $l=x'_{c+1}-1$ and claim that $(\z,l)\in\Z(\X)$, i.e. $\z=\z^i$ for some $i$. Clearly $\z\geq\x(c)$ and $x'_{c+1}\leq l+1$ so if $(\z,l)\notin\Z(\X)$ then it must fail property (2) in Definition~\ref{def:ZX}, i.e. we can find $\y\in\X$ with $\z\geq\y(c)$ and $y'_{c+1}<l+1$. Since $\y(c)\in\X(c)$ and $\z$ was minimal, we must have $\z=\y(c)$. If $y'_{c+1}=0$ then $y_1\leq c$ and therefore $\z=\y(c)\in\X(\leq c)$, a contradiction. It follows that $y'_{c+1}>0$ and thus $y_1>c$, $\y(c)=\z$ and $y'_{c+1}<x'_{c+1}$, contradicting the choice of $\x$.

We next show that $M_i/M_{i-1}=J_{\z^i,l^i}$: to do so, it suffices to verify that the kernel of the natural surjection 
\[I_{\z^i}\onto\frac{I_{\z^1}+I_{\z^2}+\cdots+I_{\z^i} + I_{\X(c+1)}}{I_{\z^1}+I_{\z^2}+\cdots+I_{\z^{i-1}} + I_{\X(c+1)}} \overset{(\ref{eq:Mi-quot})}{=} M_i/M_{i-1}\]
is $I_{\mf{succ}(\z^i,l^i)}$, i.e. it is enough to verify the equality
\begin{equation}\label{eq:Isucc=sumIz}
\begin{aligned}
I_{\mf{succ}(\z^i,l^i)} =\  & I_{\z^i}\cap(I_{\z^1}+I_{\z^2}+\cdots+I_{\z^{i-1}} + I_{\X(c+1)}) \\
\overset{(\ref{eq:IXcapIY})}{=} & I_{\sup(\z^1,\z^i)}+I_{\sup(\z^2,\z^i)}+\cdots+I_{\sup(\z^{i-1},\z^i)} + \sum_{\y\in\X(c+1)}I_{\sup(\y,\z^i)}.
\end{aligned}
\end{equation}
If $\y\in\mf{succ}(\z^i,l^i)$ then we let $\z=\y(c)$ and note that $\z\geq\z^i$ and $z_1=z^i_1=c$. If $I_{\z}\subseteq I_{\X}$ then there exists $\x\in\X$ with $\x\leq \z$, and therefore $\x=\x(c+1)\in\X(c+1)$. It follows that $\y\geq\z\geq\sup(\x,\z^i)$, so $I_{\y}\subseteq I_{\sup(\x,\z^i)}$ is contained in the right hand side of (\ref{eq:Isucc=sumIz}).  If $I_{\z}\not\subseteq I_{\X}$ then since $\z\geq\z^i$ it follows from the last part of Remark~\ref{rem:y-in-ZX} that $(\z,l)\in\Z(X)$ for some~$l$, so $\z=\z^j$ for some $j$. If $\z\neq\z^i$ then $\z^i<\z$, so we must have by the choice of ordering (\ref{eq:order-zi}) that $j<i$, hence $I_{\y}\subseteq I_{\z}=I_{\sup(\z^j,\z^i)}$ is contained in the right hand side of~(\ref{eq:Isucc=sumIz}). If $\z=\z^i$ then since $\y(c)=\z^i$ and $\y\in\mf{succ}(\z^i,l^i)$, we must have $y_k>c$ for some $k>l^i$, thus $y'_{c+1}\geq l^i+1$. Since $(\z^i,l^i)\in\Z(\X)$, there exists $\x\in\X$ with $\x(c)\leq\z\leq\y(c)$ and $x'_{c+1}\leq l^i+1\leq y'_{c+1}$, thus $\y\geq\x(c+1)\in\X(c+1)$. We obtain $\y\geq\sup(\x(c+1),\z^i)$ and therefore $I_{\y}$ is again contained in the right hand side of (\ref{eq:Isucc=sumIz}).

For the reverse inclusion in (\ref{eq:Isucc=sumIz}), let's consider any $\y\in\X(c+1)$, or $\y=\z^j$ for some $j<i$: we have to check that $I_{\sup(\y,\z^i)}\subseteq I_{\mf{succ}(\z^i,l^i)}$, which is equivalent to the fact that $\sup(\y,\z^i)\in\mf{succ}(\z^i,l^i)$. Since $\sup(\y,\z^i)\geq\z^i$, this condition is in turn equivalent to the fact that $y_k>z^i_k$ for some $k>l^i$.

If $y=\z^j$ for some $j<i$ then it follows from the choice of ordering (\ref{eq:order-zi}) that $\sup(\y,\z^i)>\z^i$; moreover, it follows from Remark~\ref{rem:z1--zl+1} that $\sup(\y,\z^i)_k=z^i_k=c$ for $k\leq l^i$, and therefore one must have $y_k>z^i_k$ for some $k>l^i$. Assume now that $\y=\x(c+1)$ for some $\x\in\X$ and that $y_k\leq z^i_k$ for $k>l^i$. It follows that $\x(c)=\y(c)\leq\z^i$ and $x'_{c+1}=y'_{c+1}\leq l^i$ since $y_{l^i+1}\leq z^i_{l^i+1}\leq c$. By condition (2) in Definition~\ref{def:ZX} applied to the pair $(\z^i,l^i)\in\Z(\X)$ and to $\x\in\X$, one must have $y'_{c+1}=x'_{c+1}=l^i+1$, which contradicts $y'_{c+1}\leq l^i$.
\end{proof}

\begin{corollary}\label{cor:filtration-S/IX}
 There is a $\GL$-equivariant filtration of $S/I_{\X}$ whose successive quotients are the modules $J_{\z,l}$ with $(\z,l)\in\Z(\X)$.
\end{corollary}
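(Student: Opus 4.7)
The plan is to assemble the desired filtration by refining the descending chain of ideals (\ref{eq:filtration-IXc}) with the pieces supplied by Lemma~\ref{lem:filtration-IXc/IXc+1}. Choosing $c_0 = \max_{\x \in \X} x_1$, so that $\X(c) = \X$ and hence $I_{\X(c)} = I_{\X}$ for all $c \geq c_0$, passage to quotients by $I_{\X}$ converts the chain into a finite $\GL$-equivariant ascending filtration of $S/I_{\X}$
\begin{equation*}
0 = I_{\X(c_0)}/I_{\X} \subseteq I_{\X(c_0-1)}/I_{\X} \subseteq \cdots \subseteq I_{\X(0)}/I_{\X} = S/I_{\X},
\end{equation*}
whose $c$-th successive quotient is $I_{\X(c)}/I_{\X(c+1)}$ for $c = 0,1,\ldots,c_0 - 1$.

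For each such $c$, I would then apply Lemma~\ref{lem:filtration-IXc/IXc+1} to refine the layer $I_{\X(c)}/I_{\X(c+1)}$ into a $\GL$-equivariant filtration whose successive quotients are exactly the $J_{\z,l}$ with $(\z,l) \in \Z(\X)$ and $z_1 = c$. Splicing these refinements into the coarse filtration above yields a single $\GL$-equivariant filtration of $S/I_{\X}$, and its set of subquotients is the disjoint union over $c \in \{0,\ldots,c_0-1\}$ of the factors produced at each level.

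To verify that these subquotients are in bijection with $\Z(\X)$, I would note two points. First, the recipe in Lemma~\ref{lem:filtration-IXc/IXc+1} produces only pairs belonging to $\Z(\X)$. Second, every $(\z,l) \in \Z(\X)$ is actually captured: conditions (1) and (2) of Definition~\ref{def:ZX} force the existence of some $\x \in \X$ with $x'_{c+1} = l+1 \geq 1$, so $c = z_1 < x_1 \leq c_0$, and hence $(\z,l)$ appears in the refinement of the layer at level $c = z_1$; moreover it appears exactly once, because by Remark~\ref{rem:y-in-ZX} the integer $l$ is determined by $\z$. The corollary then follows. Since the substantive combinatorial and homological work is already packaged inside Lemma~\ref{lem:filtration-IXc/IXc+1}, this step is essentially bookkeeping, and I do not expect any serious obstacle beyond the indexing check above.
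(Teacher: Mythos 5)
Your proof is correct and takes essentially the same approach as the paper, whose entire argument is the one sentence that the filtration is obtained by refining (\ref{eq:filtration-IXc}) using Lemma~\ref{lem:filtration-IXc/IXc+1}. Your extra bookkeeping --- that every $(\z,l)\in\Z(\X)$ has $z_1<c_0$ because Definition~\ref{def:ZX} forces some $\x\in\X$ with $x'_{z_1+1}=l+1\geq 1$, so each pair is captured exactly once at level $c=z_1$ --- is a correct elaboration of what the paper leaves implicit.
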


\begin{proof}
 The desired filtration is obtained by refining (\ref{eq:filtration-IXc}) using the filtrations constructed in Lemma~\ref{lem:filtration-IXc/IXc+1}.
\end{proof}

To verify step (1) of our strategy for proving Theorem~\ref{thm:Ext-split-IX}, we need to check (\ref{eq:surj-phi^j_zl}). This is equivalent to the fact that all the connecting homomorphisms
\begin{equation}\label{eq:conn-hom-Yzl-0}
\Ext^j_S(J_{\z,l},S) \lra \Ext^{j+1}_S(S/(I_{\z}+I_{\Y_{\z,l}}),S)
\end{equation}
are identically zero. Since the homomorphisms (\ref{eq:conn-hom-Yzl-0}) are $\GL$-equivariant, it suffices to prove that all such $\GL$-equivariant maps vanish identically, which is explained as follows. Combining  Corollary~\ref{cor:filtration-S/IX} with Lemma~\ref{lem:boundExt}, we get that $\Ext^{j+1}_S(S/(I_{\z}+I_{\Y_{\z,l}}),S)$ embeds $\GL$-equivariantly into
\[\bigoplus_{(\y,u)\in\Z(\Y_{\z,l}\cup\{\z\})}\Ext^{j+1}_S(J_{\y,u},S)\]
so the desired conclusion follows from the following

\begin{lemma}\label{lem:conn-hom-Yzl-0}
 Let $\Y = \Y_{\z,l} \cup \{\z\}$. If $(\y,u)\in\Z(\Y)$ then there exists no non-zero $\GL$-equivariant maps between $\Ext^j_S(J_{\z,l},S)$ and $\Ext^{j+1}_S(J_{\y,u},S)$.
\end{lemma}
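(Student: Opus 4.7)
The plan is to show that $\Ext^j_S(J_{\z,l},S)$ and $\Ext^{j+1}_S(J_{\y,u},S)$ share no irreducible $\GL$-constituents, which immediately implies that every $\GL$-equivariant map between them vanishes. By Remark~\ref{rem:z1--zl+1} we have $z_1=\cdots=z_{l+1}$ and $y_1=\cdots=y_{u+1}$; set $c=z_1$ and $c'=y_1$. The key invariant is the last entry of the weight: by (\ref{eq:lln=zl-m}), every weight $\ll$ appearing in $\Ext^j_S(J_{\z,l},S)$ satisfies $\ll_n=l-c-m$, whereas weights $\ll'$ appearing in $\Ext^{j+1}_S(J_{\y,u},S)$ only satisfy $\ll'_n\geq u-c'-m$.

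The argument then proceeds by case analysis on $c'$ versus $c$. The partitions in $\Y=\Y_{\z,l}\cup\{\z\}$ are $\z$, $((c+1)^{l+1})$, and the rectangles $((z_i+1)^i)$ for $i>l+1$ with $z_{i-1}>z_i$; none of these has first part exceeding $c+1$, so condition~(1) of Definition~\ref{def:ZX} rules out $c'>c$. In the case $c'<c$, any $\x\in\Y$ that can witness condition~(1) (i.e., satisfies $\x(c')\leq\y$ and $x'_{c'+1}\geq 1$) actually has $x'_{c'+1}\geq l+1$: the partitions $\z$ and $((c+1)^{l+1})$ each contribute at least $l+1$ parts of size $>c'$, while each rectangle $((z_i+1)^i)$ with $i>l+1$ either contributes $0$ or $i\geq l+2$. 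Consequently $u+1\geq l+1$, so $u-c'\geq l-c'>l-c$, which forces $\ll'_n>\ll_n$ and rules out a common irreducible.

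The main obstacle is the remaining case $c'=c$, where the $\ll_n$-invariant no longer separates the two sides. Here the only element of $\Y$ with first part exceeding $c$ is $((c+1)^{l+1})$, so Definition~\ref{def:ZX} forces it to be the unique witness of condition~(1), yielding $u=l$. To conclude I would use a parity argument. Each irreducible $S_{\ll(s)}\bb{C}^m\oo S_{\ll}\bb{C}^n$ determines the pair $(\ll,s)$ uniquely, since $s$ is recoverable from the constant plateau of length $m-n$ (and value $s-n$) in $\ll(s)$. If the same irreducible were to appear in both $\Ext$-modules, then by Theorem~\ref{thm:ExtJzl} there would exist tuples $\t,\t'$ with
\[ j=mn-l^2-s(m-n)-2\sum_{i=1}^{n-l}t_i \quad\text{and}\quad j+1=mn-l^2-s(m-n)-2\sum_{i=1}^{n-l}t'_i, \]
whose difference yields $1=2\bigl(\sum t_i-\sum t'_i\bigr)$, which is impossible by parity. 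This completes the case $c'=c$ and hence the lemma.
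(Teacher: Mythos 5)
Your proof is correct and follows essentially the same route as the paper's: (i) observe that every $\x\in\Y=\Y_{\z,l}\cup\{\z\}$ has $x_1\leq c+1$ and all non-zero columns of height at least $l+1$, (ii) deduce from condition~(2) of Definition~\ref{def:ZX} that a witness $\x$ for $(\y,u)$ has $x'_{c'+1}=u+1>0$, hence $u\geq l$ and $c'\leq c$, (iii) use~(\ref{eq:lln=zl-m}) to compare $\ll_n$ and kill the case $c'<c$, and (iv) finish the case $c'=c$, $u=l$ by parity. The paper compresses your three-way case split into the single inequality $l-c\leq u-d$ (with equality iff $l=u$, $c=d$) and then forces equality via $\ll_n$; otherwise the arguments are the same in substance.

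Two small imprecisions worth flagging. First, when you say that condition~(1) alone rules out $c'>c$, the argument actually needs condition~(2): if $c'>c$ then any $\x\in\Y$ has $x'_{c'+1}=0$, so either no $\x$ satisfies~(1) or~(2) forces $u+1=0$; either way $(\y,u)\notin\Z(\Y)$. Second, your assertion that $s$ is recoverable from the plateau of length $m-n$ in $\ll(s)$ fails when $m=n$, but this is harmless for the parity argument since in that case $s\cdot(m-n)=0$ identically. Also note that by Remark~\ref{rem:z1--zl+1} one has $y_u=y_{u+1}$, so~(\ref{eq:lln=zl-m}) gives the \emph{equality} $\ll'_n=u-c'-m$, not just a lower bound; you could use that directly as the paper does, though your inequality suffices.
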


\begin{proof}
 We write $z_1=c$ and begin with the observation that for every partition $\x\in\Y$, $x_1\leq c+1$ (see (\ref{eq:def-Yzl})). Moreover, the non-zero columns of $\x$ have size at least $l+1$ (i.e. $x'_i=0$ or $x'_i\geq l+1$ for all $i$): if $\x=\z$ then it follows from Remark~\ref{rem:z1--zl+1} that its last non-zero column has size $z'_c\geq l+1$; if $\x\in\Y_{\z,l}$ then either $\x=(z_1+1)^{l+1}$ which has all non-zero columns of size $l+1$, or $\x=(z_i+1)^i$ for some $i>l+1$, which has all non-zero columns of size $i>l+1$.

Consider any $(\y,u)\in\Z(\Y)$, and write $y_1=d$. By Definition~\ref{def:ZX}, there exists $\x\in\Y$ such that $\y\geq\x(d)$ and $x'_{d+1} = u+1>0$. It follows from the previous paragraph that $u+1\geq l+1$, i.e. $u\geq l$. Since $x'_{d+1}>0$ we get $x_1\geq d+1$, and since $x_1\leq c+1$ it follows that $d\leq c$. We conclude that
\[l-c\leq u-d,\rm{ with equality if and only if }l=u\rm{ and }c=d.\]
Assume now that there exists a non-zero $\GL$-equivariant map between $\Ext^j_S(J_{\z,l},S)$ and $\Ext^{j+1}_S(J_{\y,u},S)$, which is equivalent to saying that the two share an irreducible $\GL$-subrepresentation $S_{\mu}\bb{C}^m\oo S_{\ll}\bb{C}^n$. By (\ref{eq:lln=zl-m}) we get that
\[l-c-m = \ll_n = u - d - m,\]
so $l=u$ and $c=d$. Furthermore, it follows from (\ref{eq:Extj}) that $\mu=\ll(s)$ for some $s\geq 0$ ($s$ is unique if $m\neq n$), and that $S_{\ll(s)}\bb{C}^m\oo S_{\ll}\bb{C}^n$ appears in $\Ext^j_S(J_{\z,l},S)$ only when
\[j \equiv m\cdot n - l^2 - s\cdot(m-n)\ (\rm{mod }2).\]
By the same reasoning,  $S_{\ll(s)}\bb{C}^m\oo S_{\ll}\bb{C}^n$ appears in $\Ext^{j+1}_S(J_{\y,u},S)$ only when
\[j+1 \equiv m\cdot n - u^2 - s\cdot(m-n)\ (\rm{mod }2).\]
These two congruences can't hold simultaneously, so we obtain the desired conclusion.
\end{proof}

We are now ready to prove that the filtrations in Corollary~\ref{cor:filtration-S/IX} are $\Ext$-split.

\begin{proposition}\label{prop:Ext-split-S/IX}
 If $M_{\bullet}$ is any $\GL$-equivariant filtration of $M = S/I_{\X}$ with successive quotients $J_{\z,l}$ for $(\z,l)\in\Z(\X)$ then $M_{\bullet}$ is $\Ext$-split.
\end{proposition}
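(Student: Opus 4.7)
My plan is to deduce the Proposition from the dimension identity~(\ref{eq:ExtS/IX})---which is established via the subspaces $\E^j_{\z,l}(\X)$ in steps~(1)--(3) of the strategy above, independently of any specific filtration---by combining it with the general embedding provided by Lemma~\ref{lem:boundExt}.

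First I will apply Lemma~\ref{lem:boundExt} to the given filtration $M_\bullet$: this yields, for each $j\geq 0$ and each index $i$, a $\GL$-equivariant graded embedding
\[
\Ext^j_S(M/M_i,S)\hookrightarrow\bigoplus_{k=i}^{r-1}\Ext^j_S(M_{k+1}/M_k,S),
\]
and in particular (taking $i=0$) a global embedding $\Ext^j_S(M,S)\hookrightarrow\bigoplus_{(\z,l)\in\Z(\X)}\Ext^j_S(J_{\z,l},S)$. By~(\ref{eq:ExtS/IX}), the source and target of this global embedding agree abstractly as graded $\GL$-representations; since every graded $\GL$-isotypic component is finite-dimensional (the modules being finitely generated over the Noetherian ring~$S$), an injection of equal graded characters must be an isomorphism. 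Hence the global embedding is an isomorphism for every $j$.

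Next I will propagate this isomorphism to each intermediate step of the filtration. The embedding produced by Lemma~\ref{lem:boundExt} is constructed inductively as a composition of local embeddings
\[
\Ext^j_S(M/M_i,S)\hookrightarrow\Ext^j_S(M_{i+1}/M_i,S)\oplus\Ext^j_S(M/M_{i+1},S),
\]
each obtained by applying Lemma~\ref{lem:ABC} to the exact segment~(\ref{eq:seq-Mquots}). Because the total composition is an isomorphism, each factor in the composition must itself be an isomorphism; the ``if and only if'' direction of Lemma~\ref{lem:ABC} then identifies each such local isomorphism with the exactness of the short sequence~(\ref{eq:ses-Mquots}), equivalently with the vanishing of the connecting map $\Ext^j_S(M_{i+1}/M_i,S)\to\Ext^{j+1}_S(M/M_{i+1},S)$. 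Running this over all $i$ and $j$ is precisely the $\Ext$-split condition.

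The only conceptually substantive input is the identity~(\ref{eq:ExtS/IX}) itself; the present argument is essentially a dimension-accounting exercise that converts an abstract equality of graded $\GL$-characters into the vanishing of all the connecting morphisms appearing in the long exact sequences for the filtration. The one point that requires care---though it is automatic here---is to work throughout in the category of graded $\GL$-representations with finite-dimensional graded isotypic components, so that ``injection of equal graded characters'' unambiguously means ``isomorphism''.
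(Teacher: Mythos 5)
Your dimension-counting mechanism is sound in isolation (an injection of graded $\GL$-representations with equal finite-dimensional graded characters is an isomorphism, and forcing each local embedding of Lemma~\ref{lem:ABC} to be an isomorphism does yield exactness of (\ref{eq:ses-Mquots})), but the argument as a whole is circular: the identity (\ref{eq:ExtS/IX}) is \emph{not} available independently of the Proposition. In the paper's logical order, (\ref{eq:ExtS/IX}) is deduced from the direct sum decomposition (\ref{eq:Ext^j=sum-E^j}) (step (3) of the strategy), and that decomposition is proved in Lemma~\ref{lem:zl-in-ZY}, whose first line chooses ``a $\GL$-equivariant $\Ext$-split filtration of $S/I_{\Y}$ as in Proposition~\ref{prop:Ext-split-S/IX}.'' So your starting point already presupposes the conclusion. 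Only step (1) --- the surjectivity of $\phi^j_{\z,l}$ in (\ref{eq:surj-phi^j_zl}), obtained from the weight/parity computation of Lemma~\ref{lem:conn-hom-Yzl-0} --- is established before the Proposition; steps (2) and (3) come after it. Nor is there an obvious independent computation of the graded character of $\Ext^j_S(S/I_{\X},S)$ that you could substitute: Lemma~\ref{lem:boundExt} only gives the embedding (an upper bound), and upgrading it to an equality is essentially equivalent to the vanishing of all the connecting maps, i.e.\ to $\Ext$-splitness itself.

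The paper's actual proof bypasses the character identity entirely. It reduces $\Ext$-splitness to the surjectivity of the maps $\Ext^j_S(M/M_i,S)\to\Ext^j_S(M_{i+1}/M_i,S)$, writes $M/M_i=S/I_{\Y}$ and $M_{i+1}/M_i=J_{\z,l}$, invokes Corollary~\ref{cor:Jzl-sub-IY} to conclude $I_{\Y}\subseteq I_{\Y_{\z,l}}$, and then factors the surjection $\phi^j_{\z,l}$ of (\ref{eq:surj-phi^j_zl}) through the map in question, which forces it to be surjective. If you wish to retain a dimension-counting argument, you would first need to establish (\ref{eq:ExtS/IX}) for \emph{one} filtration by such a direct argument; only then would your bookkeeping legitimately transfer $\Ext$-splitness to an arbitrary filtration with the same factors. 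As written, the transfer has nothing to start from.
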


\begin{proof}
 Since the 3-term sequences (\ref{eq:seq-Mquots}) are part of a long exact sequence, in order to prove the exactness of (\ref{eq:ses-Mquots}) it suffices to show that for all $i,j\geq 0$ the inclusion $M_{i+1}/M_i\subseteq M/M_i$ induces a surjective map
 \begin{equation}\label{eq:surj-alpha}
 \a:\Ext^j_S(M/M_i,S)\onto\Ext^j_S(M_{i+1}/M_i,S).
 \end{equation}
Let us write $M_{i+1}/M_i = J_{\z,l}$ for some $(\z,l)\in\Z(\X)$. Since $M/M_i$ is a $\GL$-equivariant $S$-module quotient of $S/I_{\X}$, it follows that $M/M_i = S/I_{\Y}$ for some ideal $I_{\Y}\supseteq I_{\X}$. Since the inclusion $M_{i+1}/M_i\subseteq M/M_i$ is $\GL$-equivariant, it follows from Corollary~\ref{cor:Jzl-sub-IY} that $I_{\Y_{\z,l}}\supseteq I_{\Y}$. We obtain natural maps
\[J_{\z,l} \subseteq S/I_{\Y} \onto S/I_{\Y_{\z,l}}\]
which induce at the level of $\Ext$ modules the maps
\[\Ext^j_{S}(S/I_{\Y_{\z,l}},S) \overset{\b}{\lra} \Ext^j_{S}(S/I_{\Y},S) \overset{\a}{\lra} \Ext^j_S(J_{\z,l},S).\]
The composition $\a\circ\b$ coincides with the map $\phi^j_{\z,l}$ in (\ref{eq:surj-phi^j_zl}), so it is surjective. This implies that $\a$ is surjective as well, proving (\ref{eq:surj-alpha}).
\end{proof}

We verify steps (2) and (3) of our strategy for proving Theorem~\ref{thm:Ext-split-IX} in parallel: to do so we have to check (\ref{eq:E^j_zl-Y}) and (\ref{eq:Ext^j=sum-E^j}). We prove half of (\ref{eq:E^j_zl-Y}) together with (\ref{eq:Ext^j=sum-E^j}) in Lemma~\ref{lem:zl-in-ZY} below, and verify the remaining half of (\ref{eq:E^j_zl-Y}) in Lemma~\ref{lem:zl-notin-ZY}.

\begin{lemma}\label{lem:zl-in-ZY}
 With the notation in (\ref{eq:def-Ejzl}) and (\ref{eq:def-Ejzl(Y)}) we have that if $(\z,l)\in\Z(\Y)$ then $\E^j_{\z,l}(\Y)\simeq\E^j_{\z,l}$. Moreover, equation (\ref{eq:Ext^j=sum-E^j}) holds.
\end{lemma}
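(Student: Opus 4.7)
The plan is to leverage the $\Ext$-split filtration of $S/I_\Y$ provided by Corollary~\ref{cor:filtration-S/IX} and Proposition~\ref{prop:Ext-split-S/IX}, and to match its subquotients with the spaces $\E^j_{\z,l}(\Y)$ for $(\z,l)\in\Z(\Y)$. First, one must verify that $\pi^j_{\z,l,\Y}$ is even defined, i.e.\ that $I_\Y\subseteq I_{\Y_{\z,l}}$. Since $J_{\z,l}$ appears as a subquotient in the filtration of $S/I_\Y$, one has $J_{\z,l}\hookrightarrow S/I_{\Y'}$ for some $\Y'$ with $I_\Y\subseteq I_{\Y'}$, and Corollary~\ref{cor:Jzl-sub-IY} then forces $I_{\Y'}\subseteq I_{\Y_{\z,l}}$, giving the required inclusion.

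Now fix such a filtration $0=M_0\subset M_1\subset\cdots\subset M_r=S/I_\Y$, write $M_{i+1}/M_i=J_{\z^{i+1},l^{i+1}}$ for some enumeration $\Z(\Y)=\{(\z^k,l^k):1\leq k\leq r\}$, and let $M/M_i=S/I_{\Y^{(i)}}$. Since the filtration is $\Ext$-split, for each $j$ and $i$ we get a short exact sequence
\[0\lra\Ext^j_S(M/M_{i+1},S)\overset{\rho_i}{\lra}\Ext^j_S(M/M_i,S)\overset{\sigma_i}{\lra}\Ext^j_S(J_{\z^{i+1},l^{i+1}},S)\lra 0.\]
I will prove by descending induction on $i$ that
\[\Ext^j_S(M/M_i,S)=\bigoplus_{k>i}\E^j_{\z^k,l^k}(\Y^{(i)}),\]
with each $\E^j_{\z^k,l^k}(\Y^{(i)})\simeq\E^j_{\z^k,l^k}$. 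The base case $i=r-1$ is immediate, because $M/M_{r-1}=J_{\z^r,l^r}$ and $\pi^j_{\z^r,l^r,\Y^{(r-1)}}$ restricted to $\E^j_{\z^r,l^r}$ coincides with $\phi^j_{\z^r,l^r}|_{\E^j_{\z^r,l^r}}$, an isomorphism onto $\Ext^j_S(J_{\z^r,l^r},S)$ by construction.

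For the inductive step, for $k>i+1$ the functoriality (\ref{eq:E^j-YtoX}) yields $\rho_i(\E^j_{\z^k,l^k}(\Y^{(i+1)}))=\E^j_{\z^k,l^k}(\Y^{(i)})$; together with the injectivity of $\rho_i$ and the inductive hypothesis, this gives $\E^j_{\z^k,l^k}(\Y^{(i)})\simeq\E^j_{\z^k,l^k}$ and $\bigoplus_{k>i+1}\E^j_{\z^k,l^k}(\Y^{(i)})=\operatorname{Im}(\rho_i)=\ker(\sigma_i)$. For $k=i+1$, the uniqueness clause of Corollary~\ref{cor:Jzl-sub-IY} forces the composition $J_{\z^{i+1},l^{i+1}}=M_{i+1}/M_i\hookrightarrow M/M_i\onto S/I_{\Y_{\z^{i+1},l^{i+1}}}$ to agree up to a non-zero scalar with the canonical inclusion from that Corollary. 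Consequently $\sigma_i\circ\pi^j_{\z^{i+1},l^{i+1},\Y^{(i)}}$ restricted to $\E^j_{\z^{i+1},l^{i+1}}$ equals $\phi^j_{\z^{i+1},l^{i+1}}|_{\E^j_{\z^{i+1},l^{i+1}}}$, hence an isomorphism. This forces $\pi^j_{\z^{i+1},l^{i+1},\Y^{(i)}}|_{\E^j_{\z^{i+1},l^{i+1}}}$ to be injective (so $\E^j_{\z^{i+1},l^{i+1}}(\Y^{(i)})\simeq\E^j_{\z^{i+1},l^{i+1}}$), its image to intersect $\ker(\sigma_i)$ trivially, and $\sigma_i$ to restrict on that image to an isomorphism onto $\Ext^j_S(J_{\z^{i+1},l^{i+1}},S)$. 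Combining yields the decomposition at stage $i$; setting $i=0$ produces (\ref{eq:Ext^j=sum-E^j}) with $\E^j_{\z,l}(\Y)\simeq\E^j_{\z,l}$ for all $(\z,l)\in\Z(\Y)$.

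The main obstacle is establishing that the inclusion $J_{\z^{i+1},l^{i+1}}=M_{i+1}/M_i\hookrightarrow M/M_i$ inherited from the filtration, after projection to $S/I_{\Y_{\z^{i+1},l^{i+1}}}$, agrees up to a non-zero scalar with the canonical inclusion of Corollary~\ref{cor:Jzl-sub-IY}. This compatibility is precisely what identifies $\sigma_i\circ\pi^j_{\z^{i+1},l^{i+1},\Y^{(i)}}$ with $\phi^j_{\z^{i+1},l^{i+1}}$ on $\E^j_{\z^{i+1},l^{i+1}}$, and drives the entire inductive step.
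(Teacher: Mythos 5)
Your proposal is correct and follows essentially the same strategy as the paper: both exploit the $\Ext$-split filtration from Proposition~\ref{prop:Ext-split-S/IX} and the key identification that the filtration inclusion $J_{\z,l}\hookrightarrow M/M_i$, followed by the quotient $M/M_i\onto S/I_{\Y_{\z,l}}$, induces $\phi^j_{\z,l}$ on $\Ext$ (via the uniqueness in Corollary~\ref{cor:Jzl-sub-IY}). The paper iteratively pushes the direct-sum decompositions down to $\Ext^j_S(S/I_\Y,S)$ rather than maintaining one at each intermediate stage $\Ext^j_S(M/M_i,S)$, but that is only a difference in bookkeeping; note only that uniqueness up to scalar does not by itself guarantee the scalar is non-zero, so you should add the one-line observation that the composition is non-zero on the generating $S_{\z}$-component (which lies outside $I_{\Y_{\z,l}}$).
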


\begin{proof}
 We choose a $\GL$-equivariant $\Ext$-split filtration $M_{\bullet}$ of $S/I_{\Y}$ as in Proposition~\ref{prop:Ext-split-S/IX}. We write $M/M_i = S/I_{\X^i}$ for some ideal $I_{\X^i}\supseteq I_{\Y}$, and $M_{i+1}/M_i = J_{\z^i,l^i}$ where $(\z^i,l^i)\in\Z(\Y)$. Since $M_{\bullet}$ is $\Ext$-split, it follows from the exactness of (\ref{eq:ses-Mquots}) that the induced maps $\Ext^j_S(M/M_{i+1},S)\lhra\Ext^j_S(M/M_i,S)$ are injective. Note that these are precisely the maps $\pi^j_{\X^{i+1}\to\X^i}$ in (\ref{eq:pi^j-YtoX}). Since $J_{\z^i,l^i}$ is a $\GL$-equivariant submodule of $S/I_{\X^i}$, it follows from Corollary~\ref{cor:Jzl-sub-IY} that $I_{\Y_{\z^i,l^i}}\supseteq I_{\X^i}$, so we get a diagram
 \[
  \xymatrix{
 & S/I_{\Y} = S/I_{\X^0} \ar@{->>}[d] & \\
 J_{\z^i,l^i} \ar@{}[r]|-*[@]{\subseteq} & S/I_{\X^i} \ar@{->>}[r] \ar@{->>}[d] & S/I_{\Y_{\z^i,l^i}} \\
 & S/I_{\X^{i+1}} & \\
 }
 \]
This induces at the level of $\Ext$ modules a diagram
 \[
  \xymatrix{
  & \Ext^j_S(S/I_{\X^{i+1}},S) \ar[d]^{\d} & \\
 \E^j_{\z^i,l^i}\subseteq {\Ext^j_S(S/I_{\Y_{\z^i,l^i}},S)} \ar[r]^(.6){\b} & \Ext^j_S(S/I_{\X^i},S) \ar[r]^{\a} \ar[d]^{\gamma} & \Ext^j_S(J_{\z^i,l^i},S) \\
 & \Ext^j_S(S/I_{\Y},S) &  \\
 }
 \]
We make a couple of observations:
\begin{itemize}
 \item[(a)] The map $\gamma$ is injective since it is the composition of injective maps $\pi^j_{\X^1\to\X^0}\circ\cdots\circ\pi^j_{\X^i\to\X^{i-1}}$.
 \item[(b)] We have that $\b=\pi^j_{\z^i,l^i,\X^i}$ and $\gamma\circ\b = \pi^j_{\z^i,l^i,\Y}$.
 \item[(c)] We have that $\a\circ\b=\phi^j_{\z^i,l^i}$, so it maps $\E^j_{\z^i,l^i}$ isomorphically onto $\Ext^j_S(J_{\z^i,l^i},S)$. It follows that $\b$ maps $\E^j_{\z^i,l^i}$ isomorphically onto $\b(\E^j_{\z^i,l^i})$. Combining this with the injectivity of $\gamma$ we get
 \[\E^j_{\z^i,l^i}\simeq\b(\E^j_{\z^i,l^i})\simeq(\gamma\circ\b)(\E^j_{\z^i,l^i}) = \pi^j_{\z^i,l^i,\Y}(\E^j_{\z^i,l^i}) = \E^j_{\z^i,l^i}(\Y),\]
 which proves the first assertion in our lemma.
 \item[(d)] The map $\d$ coincides with $\pi^j_{\X^{i+1}\to\X^i}$, hence it is injective.
 \item[(e)] We have that $\ker(\a)=\rm{Im}(\d)$, since the sequence 
 \[0\lra \Ext^j_S(S/I_{\X^{i+1}},S) \overset{\d}{\lra} \Ext^j_S(S/I_{\X^i},S) \overset{\a}{\lra} \Ext^j_S(J_{\z^i,l^i},S) \lra 0\]
 is exact: this coincides with the sequence (\ref{eq:ses-Mquots}) and is exact because $M_{\bullet}$ is $\Ext$-split.
 \item[(f)] By (c), $\b(\E^j_{\z^i,l^i})$ maps isomorphically onto the image of $\a$, so combining this with the exact sequence in (e) we get a direct sum decomposition
 \[\Ext^j_S(S/I_{\X^i},S) = \rm{Im}(\d) \oplus \b(\E^j_{\z^i,l^i}) \overset{(b)}{=} \rm{Im}(\d) \oplus \E^j_{\z^i,l^i}(\X^i).\]
\end{itemize}
 
 Applying $\pi^j_{\X^i\to\Y}$ to the decomposition in (f), and using the fact that $\pi^j_{\X^i\to\Y}\circ\d = \pi^j_{\X^{i+1}\to\Y}$ we get
 \[\rm{Im}\left(\pi^j_{\X^i\to\Y}\right) = \rm{Im}\left(\pi^j_{\X^{i+1}\to\Y}\right) \oplus \E^j_{\z^i,l^i}(\Y).\]
 Applying this iteratively for $i=0,1,\cdots$, and using the fact that $I_{\X^0}=I_{\Y}$ and that $\pi^j_{\X^0\to\Y}$ is just the identity on $\Ext^j_S(S/I_{\Y},S)$, we obtain (\ref{eq:Ext^j=sum-E^j}).
 \end{proof}

\begin{lemma}\label{lem:zl-notin-ZY}
 If $I_{\Y}\subseteq I_{\Y_{\z,l}}$ and $(\z,l)\not\in\Z(\Y)$ then $\E^j_{\z,l}(\Y)=0$.
\end{lemma}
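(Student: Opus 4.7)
The plan is to show $\E^j_{\z,l}(\Y)=0$ by proving that no $\GL$-irreducible of $\Ext^j_S(J_{\z,l},S)$ can coexist with $\GL$-irreducibles of $\Ext^j_S(J_{\y,u},S)$ for $(\y,u)\in\Z(\Y)$, adapting the argument of Lemma~\ref{lem:conn-hom-Yzl-0} to the setting of the same cohomological degree.

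First, Lemma~\ref{lem:zl-in-ZY} (which has just been proved) furnishes the $\GL$-equivariant decomposition
\[\Ext^j_S(S/I_\Y,S)\simeq\bigoplus_{(\y,u)\in\Z(\Y)}\E^j_{\y,u}(\Y),\]
with each summand isomorphic to $\Ext^j_S(J_{\y,u},S)$. Since $\E^j_{\z,l}(\Y)=\pi^j_{\z,l,\Y}(\E^j_{\z,l})$ is a $\GL$-equivariant quotient of $\E^j_{\z,l}\simeq\Ext^j_S(J_{\z,l},S)$, every $\GL$-irreducible summand of $\E^j_{\z,l}(\Y)$ appears both in $\Ext^j_S(J_{\z,l},S)$ and, via the decomposition, in some $\Ext^j_S(J_{\y,u},S)$ with $(\y,u)\in\Z(\Y)$. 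Showing $\E^j_{\z,l}(\Y)=0$ therefore reduces to proving no such common $\GL$-irreducible $S_\mu\bb{C}^m\otimes S_\ll\bb{C}^n$ can exist.

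Assume for contradiction that such a common irreducible exists, and write $c=z_1$, $d=y_1$. Equation~(\ref{eq:lln=zl-m}) applied to both modules yields $\ll_n=l-c-m=u-d-m$, hence $l-c=u-d$. The hypothesis $I_\Y\subseteq I_{\Y_{\z,l}}$ forces the witness $\x\in\Y$ for $(\y,u)\in\Z(\Y)$ (i.e., $\x$ with $\x(d)\leq\y$ and $x'_{d+1}=u+1$) to dominate some $\tilde{\x}\in\Y_{\z,l}$. A case analysis on the two shapes of $\tilde{\x}$ in (\ref{eq:def-Yzl}), as in the proof of Lemma~\ref{lem:conn-hom-Yzl-0}, yields $u-d\geq l-c$ when $d\leq c$, with equality forcing $u=l$, $d=c$, and $\tilde{\x}=(c+1)^{l+1}$. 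Combined with $l-c=u-d$, we deduce $(u,d)=(l,c)$. To further conclude $\y=\z$ (needed to reach the contradiction $(\y,u)=(\z,l)\in\Z(\Y)$), I would invoke the finer constraints $\ll_{t_k+k}=t_k-z_{n+1-k}-m$ from (\ref{eq:Extj}) together with the degree identity $j=mn-l^2-s(m-n)-2\sum t_k$, which, upon extracting $s$ from $\mu$ (uniquely when $m>n$, with a separate argument when $m=n$), pins down the tuple $(\t,\z)$ uniquely from $(\ll,\mu,j)$; the subcase $d>c$, where the straightforward inequality argument fails, requires the same block-structure analysis of $\ll$.

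This gives $(\y,u)=(\z,l)$, contradicting $(\z,l)\notin\Z(\Y)$ while $(\y,u)\in\Z(\Y)$, so $\E^j_{\z,l}(\Y)=0$. The main obstacle is the final uniqueness step: unlike Lemma~\ref{lem:conn-hom-Yzl-0}, where differing parities of $j$ and $j+1$ provided an immediate contradiction, here the same cohomological degree appears on both sides and one must instead rely on a combinatorial recovery of $(\t,\z)$ from the block structure of $\ll$ enforced by dominance together with the specified values at positions $t_k+k$.
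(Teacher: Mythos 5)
Your reduction is valid as far as it goes: if $\Ext^j_S(J_{\z,l},S)$ and $\Ext^j_S(S/I_{\Y},S)$ shared no irreducible $\GL$-representation, then $\E^j_{\z,l}(\Y)$, being simultaneously a quotient of the former and a subrepresentation of the latter, would vanish. The gap is that this disjointness is genuinely false, and in particular your key uniqueness claim --- that $(\ll,\mu,j)$ pins down $(\t,\z)$, so that a common irreducible forces $\y=\z$ --- fails. Take $m=n=5$, $l=u=2$, $\z=(3,3,3,2,0)$, $\y=(3,3,3,1,1)$, and $\ll=(-5,-5,-5,-5,-6)$. One checks all conditions of (\ref{eq:lam-in-W}) to see that $\ll\in W(\z,2;(0,2,2),0)$ and also $\ll\in W(\y,2;(1,1,2),0)$, and both triples give the same cohomological degree $j=25-4-0-2\cdot 4=13$ and the same internal degree $|\ll|=-26$; hence $S_{\ll(0)}\bb{C}^5\oo S_{\ll}\bb{C}^5$ occurs in both $\Ext^{13}_S(J_{\z,2},S)$ and $\Ext^{13}_S(J_{\y,2},S)$. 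Moreover this collision is realized inside an instance of the lemma: for $\Y=\{(4,4,4,1,1)\}$ one has $I_{\Y}\subseteq I_{\Y_{\z,2}}$, $(\y,2)\in\Z(\Y)$, and $(\z,2)\notin\Z(\Y)$, so by Lemma~\ref{lem:zl-in-ZY} the module $\Ext^{13}_S(S/I_{\Y},S)$ really does contain an irreducible of $\Ext^{13}_S(J_{\z,2},S)$ even though the lemma asserts $\E^{13}_{\z,2}(\Y)=0$. So no character-level comparison of $\Ext^j_S(J_{\z,l},S)$ with $\Ext^j_S(S/I_{\Y},S)$ itself can close the proof; one must track where the map $\pi^j_{\z,l,\Y}$ actually sends things. (Your intermediate inequality $u-d\geq l-c$ for general $\Y$ is also not justified as stated, since an arbitrary $\x\in\Y$ dominating some element of $\Y_{\z,l}$ need not have all nonzero columns of height $\geq l+1$ nor satisfy $x_1\leq c+1$; but this is secondary to the failure of the final step.)

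The paper circumvents this by interposing an auxiliary ideal: it sets $\X=\{((z_i+1)^i):i>l+1\}$ and uses the hypothesis $(\z,l)\notin\Z(\Y)$ precisely to prove $I_{\Y}\subseteq I_{\X}\subseteq I_{\Y_{\z,l}}$, so that $\pi^j_{\z,l,\Y}$ factors through $\Ext^j_S(S/I_{\X},S)$ and it suffices to show $\E^j_{\z,l}(\X)=0$. For this particular $\X$ every $(\y,u)\in\Z(\X)$ satisfies $u>l$ and $y_1\leq z_1$, and then the single equation (\ref{eq:lln=zl-m}) yields $u-l=y_1-z_1$, an immediate contradiction; no recovery of $(\t,\z)$ from the weight is needed. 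The missing idea in your argument is this choice of intermediate quotient (or some other substantive use of $(\z,l)\notin\Z(\Y)$ beyond the factorization through $I_{\Y_{\z,l}}$); as written, the final step cannot be completed.
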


\begin{proof}
 We write $z_1=c$, and recall from Remark~\ref{rem:z1--zl+1} that $z_1=\cdots=z_{l+1}=c$. We consider
 \[\X = \{ ((z_i+1)^i) : i>l+1\}.\]
and let $I_{\X}$ denote the associated ideal. We will prove that 
\begin{enumerate}
 \item[(a)] $I_{\Y}\subseteq I_{\X}\subseteq I_{\Y_{\z,l}}$.
 \item[(b)] $\Ext^j_S(J_{\z,l},S)$ and $\Ext^j_S(S/I_{\X},S)$ share no irreducible $\GL$-subrepresentation.
\end{enumerate}
Once we do so, we can use (a) to obtain surjections $S/I_{\Y_{\z,l}}\onto S/I_{\X}\onto S/I_{\Y}$, which yield maps
\[\Ext^j_S(S/I_{\Y_{\z,l}}) \lra \Ext^j_S(S/I_{\X}) \lra \Ext^j_S(S/I_{\Y}).\]
It follows that the map $\pi^j_{\z,l,\Y}$ factors through $\pi^j_{\z,l,\X}$, and therefore $\E^j_{\z,l}(\Y)$ is a homomorphic image of $\E^j_{\z,l}(\X)$, so it suffices to prove that the latter is zero. Since $\E^j_{\z,l}(\X)$ is a $\GL$-subrepresentation of $\Ext^j_S(S/I_{\X},S)$ and a quotient of $\E^j_{\z,l}\simeq\Ext^j_S(J_{\z,l},S)$, it follows from (b) that $\E^j_{\z,l}(\X)=0$, as desired.

To prove assertion (a), we treat the two inclusions separately:

$I_{\Y}\subseteq I_{\X}$: suppose that this isn't the case, and choose $\y\in\Y$ such that $I_{\y}\not\subseteq I_{\X}$. It follows that $y_i\leq z_i$ for all $i>l+1$, which implies $\z\geq\y(c)$. Since $I_{\y}\subseteq I_{\Y}\subseteq I_{\Y_{\z,l}}$, it follows that $\y\geq((z_1+1)^{l+1})$, i.e. $y_{l+1}\geq c+1$. Since we also have $y_{l+2}\leq z_{l+2}\leq c$, it follows that $y'_{c+1}=l+1$. Now since $(\z,l)\notin\Z(\Y)$, it must fail condition (2) in Definition~\ref{def:ZX}, i.e. there must be an element $\x\in\Y$ with $\z\geq\x(c)$ and $x'_{c+1}<l+1$. It follows that $x_i\leq c$ for $i>l$ and since $\z\geq\x(c)$ we get $x_i\leq z_i$ for $i>l$. Since $I_{\x}\subseteq I_{\Y}\subseteq I_{\Y_{\z,l}}$, it follows that $\x\geq((z_1+1)^{l+1})$, which means that $x'_{c+1}\geq l+1$, a contradiction.

$I_{\X}\subseteq I_{\Y_{\z,l}}$: if $\x=((z_i+1)^i)$ for some $i>l+1$ then we have to show that $I_{\x}\subseteq I_{\Y_{\z,l}}$. If $z_i=c$ then $\x\geq((c+1)^i)\geq((z_1+1)^{l+1})\in\Y_{\z,l}$ so $I_{\x}\subseteq I_{\Y_{\z,l}}$. If $z_i<c$, let $j$ be the minimal index for which $z_j=z_i$. We must have $j>l+1$ because $z_{l+1}=c>z_i$, and $z_j<z_{j-1}$ by the minimality of $j$. It follows that $\x\geq((z_j+1)^j)\in\Y_{\z,l}$, so $I_{\x}\subseteq I_{\Y_{\z,l}}$ in this case as well.

To finish our proof, we need to verify assertion (b). Every non-zero column of $\x\in\X$ has size bigger than $l+1$, so it follows from part (2) of Definition~\ref{def:ZX} that if $(\y,u)\in\Z(\X)$ then $u+1>l+1$, i.e. $u>l$. Every $\x\in\X$ satisfies $x_1\leq c+1$, so if $(\y,u)\in\Z(\X)$ then $y_1\leq c$. If $S_{\mu}\bb{C}^m\oo S_{\ll}\bb{C}^n$ occurs as a subrepresentation inside $\Ext^j_S(S/I_{\X},S)$ then there exists $(\y,u)\in\Z(\X)$ such that $S_{\mu}\bb{C}^m\oo S_{\ll}\bb{C}^n$ occurs inside $\Ext^j_S(J_{\y,u},S)$. It follows from (\ref{eq:lln=zl-m}) that $\ll_n = u - y_u - m$. If $S_{\mu}\bb{C}^m\oo S_{\ll}\bb{C}^n$ occurs also inside $\Ext^j_S(J_{\z,l},S)$ then $\ll_n = l - z_l - m$ by the same reasoning. We get
\[u - y_1 = u - y_u = l - z_l = l - c\rm{ hence }u-l=y_1-c,\]
which is in contradiction with the inequalities $y_1\leq c$ and $u>l$.
\end{proof}

\section{An optimization problem}\label{sec:optimization}

In this section we solve an optimization problem that will allow us to determine the regularity of large powers and symbolic powers of generic determinantal ideals. The main result, which will be used throughout Section~\ref{sec:reg-pows}, is Proposition~\ref{prop:Rlpnd-dlarge} below. For $0\leq l<p\leq n$ we consider

\begin{equation}\label{eq:YU-lpnd}
\YU(l,p,n,d) = \left\{ (\y,\u) \in \P_{n-l}\times\P_{n-l} : 
\begin{aligned} 
& |\y|\leq d\cdot(p-l)-1,|\y|-y_1\geq d\cdot(p-1-l) \\  
& u_1 = l, y_i - y_{i+1} \geq u_i - u_{i+1}\rm{ for }1\leq i\leq n-l-1 
\end{aligned}
\right\}
\end{equation}

For $(\y,\u)\in\YU(l,p,n,d)$ we define
\begin{equation}\label{eq:def-glpnd}
 g_{l,p,n,d}(\y,\u) = l\cdot y_1 + |\y| + |\u| - \sum_{i=1}^{n-l-1} u_{i+1}\cdot((y_i-y_{i+1}) - (u_i - u_{i+1})),
\end{equation}
and let
\begin{equation}\label{eq:def-Rlpnd}
R_{l,p,n,d} = \max_{(\y,\u)\in\YU(l,p,n,d)} g_{l,p,n,d}(\y,\u),
\end{equation}
with the convention that $R_{l,p,n,d} = -\infty$ when $\YU(l,p,n,d)$ is the empty set.



\begin{proposition}\label{prop:Rlpnd-dlarge}
 If $0\leq l < p\leq n-1$, or if $p=n$ and $l=p-1$, then
\begin{equation}\label{eq:Rlpnd-dlarge}
R_{l,p,n,d} = p\cdot d - 1 + l\cdot(p-1-l)\rm{ for }d\geq n-1.
\end{equation}
\end{proposition}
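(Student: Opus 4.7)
The plan is to prove the equality by establishing matching lower and upper bounds.

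For the lower bound, I would exhibit the explicit pair $\y = \bigl((d-1)^{p-l}, p-l-1, 0, \ldots\bigr)$ and $\u = \bigl(l^{p-l}, 0, \ldots\bigr)$ (with the entry $p-l-1$ omitted when $l = p-1$, in which case $\y = (d-1)$). The hypothesis $p \leq n-1$, or the degenerate case $p=n$, $l=p-1$ where $n-l=1$, ensures that $\y, \u \in \P_{n-l}$; a short calculation verifies $|\y| = d(p-l)-1$ and $|\y| - y_1 = d(p-l-1) = d(p-1-l)$, and the interlacing $y_i - y_{i+1} \geq u_i - u_{i+1}$ holds term-by-term (the only nontrivial check is at $i=p-l$, which reduces to $d \geq p$, a consequence of $d \geq n-1 \geq p$). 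Because $u_{i+1} = 0$ for $i \geq p-l$ and both $y_i - y_{i+1}$ and $u_i - u_{i+1}$ vanish for $i < p-l$, the sum defining $g_{l,p,n,d}$ is identically zero and
\[g_{l,p,n,d}(\y,\u) = l(d-1) + d(p-l) - 1 + l(p-l) = pd - 1 + l(p-1-l).\]

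For the upper bound my first step would be the algebraic reformulation obtained via Abel summation: setting $b_i = u_i - u_{i+1}$ (with $u_{n-l+1} = 0$) and $c_i = y_i - u_i$, one checks
\[g_{l,p,n,d}(\y,\u) = l^2 + |\y| + |\u| + \sum_{i=1}^{n-l} b_i c_i,\]
where $b_i \geq 0$ with $\sum b_i = l$, and the interlacing constraint is equivalent to the monotonicity $c_1 \geq c_2 \geq \cdots \geq c_{n-l}$. The inequality $\sum b_i c_i \leq l c_1 = l(y_1 - l)$, combined with the algebraic manipulation $(l+1)|\y| - l(|\y|-y_1) \leq (l+1)(d(p-l)-1) - l \cdot d(p-1-l) = dp - l - 1$ (which uses both membership constraints defining $\YU(l,p,n,d)$), yields the intermediate bound $g_{l,p,n,d} \leq dp - l - 1 + |\u|$. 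This is sharp exactly when $|\u| = l(p-l)$, the value attained by the constructed maximizer.

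The main difficulty will be controlling the regime $|\u| > l(p-l)$, where the crude estimate above exceeds the target by $|\u| - l(p-l)$. The plan is to handle this through an LP-duality argument in the "excess column-count" variables $N_i = M_i - m_i$, where $M_i = y_i - y_{i+1}$ and $m_i = u_i - u_{i+1}$: one rewrites
\[g_{l,p,n,d} = l^2 + 2|\u| + \sum_{i=1}^{n-l}(i+P_i)\, N_i, \qquad P_i := \sum_{j \leq i} m_j,\]
subject to the linear constraints $\sum i N_i \leq d(p-l) - 1 - |\u|$, $\sum (i-1) N_i \geq d(p-1-l) - |\u| + l$, and $N_i \geq 0$ for $i < n-l$. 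A parametric dual certificate $(\lambda, \mu)$ depending on the partial-sum profile $(P_i)$, together with a case analysis on the length $k$ of $\u$, gives the bound $g_{l,p,n,d} \leq l(k-1) + pd - 1$ for $k \leq p-l$ (saturated at $k=p-l$) and a strictly sub-target bound for $k > p-l$ (the latter a consequence of the algebraic inequality $d(p-l-k) + k - 1 < 0$, which uses $d \geq n-1$). For general (non-rectangular) $\u$, an exchange argument reduces to the rectangular skeleton $(l^k, 0, \ldots)$ without decreasing $g$, completing the proof; the degenerate cases $l=0$ (where $\u = \ul{0}$ and $g = |\y| \leq pd-1$) and $(p,l) = (n, n-1)$ (where $n-l = 1$ and the sum collapses) are immediate.
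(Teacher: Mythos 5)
Your lower bound is exactly the paper's (Lemma~4.3), and your Abel-summation reformulation $g = l^2 + |\y| + |\u| + \sum_i b_i c_i$ with $b_i = u_i - u_{i+1}$, $c_i = y_i - u_i$ is correct and leads cleanly to the intermediate bound $g \leq pd - l - 1 + |\u|$, which settles the regime $|\u| \leq l(p-l)$. The paper's upper-bound argument is quite different: an induction on $n$ with a four-way case split according to the values of $u_{n-l}$ and $y_{n-l}$, using a lexicographic-maximality argument, a reduction to a smaller instance $\YU(l-k,p-k,n-k,d)$ in the case $u_{n-l}=k>0$, and direct estimation in a final case. There is no LP duality in the paper.

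The part of your plan that handles $|\u| > l(p-l)$ has a genuine gap. First, the claim that one gets a \emph{strictly} sub-target bound when the length $k$ of $\u$ exceeds $p-l$ is false. Take $n=5$, $p=4$, $l=1$, $d=n-1=4$, $\u=(1,1,1,1)$ (so $k=4 > p-l=3$) and $\y=(3,3,3,2)$. Then $(\y,\u)\in\YU(1,4,5,4)$ ($|\y|=11=d(p-l)-1$, $|\y|-y_1=8=d(p-1-l)$, interlacing holds) and $g_{1,4,5,4}(\y,\u) = 3 + 11 + 4 - 1 = 17 = pd-1+l(p-1-l)$, i.e.\ the target is \emph{attained} with $k>p-l$. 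Second, and more seriously, the LP relaxation you write down is not tight: for the same parameters, the LP optimum in the $N_i$ variables (with $N_1,N_2,N_3\geq 0$, $N_4\geq -1$) is $N_4 = 7/4$, giving $g_{\mathrm{LP}} = 17.75 > 17$. Hence no feasible dual certificate $(\lambda,\mu)$ can certify the integer optimum --- LP weak duality only produces the bound $17.75$ here, and the slack is an integrality effect that your sketch does not address. The proposed exchange reduction to a rectangular skeleton $\u=(l^k)$ is also unjustified: replacing $\u$ by $(l^k)$ changes which interlacing constraints bind on $\y$, so it is not clear that $g$ cannot decrease, nor that the admissible set of $\y$'s is preserved. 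To salvage this route you would need an argument that exploits integrality (e.g.\ total unimodularity of a different LP formulation, or a discrete exchange argument), not a continuous LP certificate; alternatively, you could adopt the paper's inductive case analysis, whose Case~3 reduction $(\y,\u)\mapsto(\y,\u-(k^{n-l}))\in\YU(l-k,p-k,n-k,d)$ is the structural ingredient that handles exactly the troublesome regime $u_{n-l}=k>0$.
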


\begin{lemma}\label{lem:y1<=d-1}
 Consider integers $0\leq l<p\leq n$. If $(\y,\u)\in\YU(l,p,n,d)$ then 
 \begin{equation}\label{eq:y1<=d-1}
 y_1\leq d-1.
 \end{equation}
\end{lemma}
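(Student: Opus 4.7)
The inequality $y_1 \leq d-1$ follows by directly combining the two numerical constraints that appear in the definition of $\YU(l,p,n,d)$; no use of the partition structure or of $\u$ is required. My plan is as follows.

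The plan is to isolate $y_1$ from one inequality and bound it via the other. From the defining conditions of $\YU(l,p,n,d)$, any pair $(\y,\u)\in\YU(l,p,n,d)$ satisfies
\[
|\y|\leq d\cdot(p-l)-1 \qquad\textrm{and}\qquad |\y|-y_1\geq d\cdot(p-1-l).
\]
Rewriting the second inequality as $y_1 \leq |\y| - d\cdot(p-1-l)$ and substituting the upper bound for $|\y|$ from the first yields
\[
y_1 \leq |\y| - d\cdot(p-1-l) \leq \bigl(d\cdot(p-l)-1\bigr) - d\cdot(p-1-l) = d - 1,
\]
which is exactly (\ref{eq:y1<=d-1}).

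Since the argument is a one-line subtraction of the two defining inequalities, there is no real obstacle; in particular the hypothesis $0\leq l < p \leq n$ is not needed for the bound itself, it only ensures the inequalities are meaningful (e.g.\ that $d\cdot(p-1-l)\geq 0$, so that the second condition is nonvacuous). The lemma will be used in the next step to control the range of the first part of $\y$ when estimating $g_{l,p,n,d}(\y,\u)$ in the proof of Proposition~\ref{prop:Rlpnd-dlarge}.
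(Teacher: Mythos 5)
Your argument is correct and is exactly the paper's proof: both chain $y_1 \leq |\y| - d\cdot(p-1-l) \leq (d\cdot(p-l)-1) - d\cdot(p-1-l) = d-1$ directly from the two numerical constraints in (\ref{eq:YU-lpnd}). Nothing further is needed.
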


\begin{proof}
 We note that by (\ref{eq:YU-lpnd})
 \[y_1\leq |\y| - d\cdot(p-1-l) \leq (d\cdot(p-l) - 1) - d\cdot(p-1-l) = d-1.\qedhere\]
 \end{proof}

\begin{lemma}\label{lem:Rlpnd-lowerbound}
 If $0\leq l<p\leq n-1\leq d$ then $R_{l,p,n,d} \geq p\cdot d - 1 + l\cdot(p-1-l)$.
\end{lemma}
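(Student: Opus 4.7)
The plan is to establish the lower bound by exhibiting an explicit pair in $\YU(l,p,n,d)$ whose value of $g_{l,p,n,d}$ equals $pd - 1 + l(p-1-l)$. Guided by the form of the target expression, I would try
\[
\y = ((d-1)^{p-l},\ p-l-1,\ 0^{n-p-1}) \in \P_{n-l}, \qquad \u = (l^{p-l},\ 0^{n-p}) \in \P_{n-l},
\]
which in the degenerate case $p = n$, $l = p-1$ reads simply $\y = (d-1)$, $\u = (l)$.

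The first step is to check that $(\y,\u) \in \YU(l,p,n,d)$. The only non-trivial verifications are that $\y$ is weakly decreasing, which requires $d-1 \geq p-l-1$, and that the compatibility $y_i - y_{i+1} \geq u_i - u_{i+1}$ holds at the boundary index $i = p-l$, which requires $(d-1) - (p-l-1) \geq l$. Both reduce to $d \geq p$, which follows from the standing hypothesis $d \geq n-1 \geq p$. The identities $|\y| = d(p-l) - 1$ and $|\y| - y_1 = d(p-l-1)$ are then immediate, and $u_1 = l$ holds by construction.

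The second step is the computation. Every summand in $\sum_{i=1}^{n-l-1} u_{i+1}\bigl((y_i - y_{i+1}) - (u_i - u_{i+1})\bigr)$ vanishes, because for $i \leq p-l-1$ both bracketed differences are zero, while for $i \geq p-l$ the coefficient $u_{i+1}$ itself is zero. Hence
\[
g_{l,p,n,d}(\y,\u) = l(d-1) + (d(p-l) - 1) + l(p-l),
\]
which simplifies to $pd - 1 + l(p-1-l)$ as required.

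The genuinely non-obvious part is identifying the right pair; the verification and arithmetic above are then routine. I expect the chosen $(\y,\u)$ to be essentially optimal, and the shape of $\y$ — a $(p-l)\times(d-1)$ rectangle with a single extra row of length $p-l-1$ — to be suggestive of how the matching upper bound, which is not part of this lemma, should be approached.
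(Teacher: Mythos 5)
Your proposal is correct and takes exactly the same approach as the paper: the pair $(\y,\u)$ you exhibit is identical to the one in the paper's proof of this lemma, and your verification (including the clean observation that every term in the sum vanishes either because both bracketed differences are zero or because the coefficient $u_{i+1}$ is zero) matches the paper's argument. The aside about $p=n$, $l=p-1$ is harmless but falls outside the scope of the lemma, which assumes $p\leq n-1$; the paper handles that case separately.
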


\begin{proof}
We consider the partitions $\y,\u\in\P_{n-l}$ defined by
 \[
 \begin{gathered}
 y_1=\cdots=y_{p-l} = d-1,\ y_{p-l+1} = p-l-1,\ y_{p-l+2}=\cdots=y_{n-l}=0, \\
 u_1=\cdots=u_{p-l}=l,\ u_{p-l+1} = \cdots=u_{n-l} = 0,
 \end{gathered}
 \]
 and note that 
 \begin{equation}\label{eq:size-|y|}
 |\y| = d\cdot(p-l) - 1,\ |\y|-y_1 = d\cdot(p-1-l),\rm{ and }|\u|=l\cdot(p-l).
 \end{equation}
 Moreover $u_i - u_{i+1}$ is non-zero only for $i=p-l$, in which case
\[y_{p-l}-y_{p-l+1} = d - 1 - (p - l - 1) = d - p + l \geq (n-1) - p + l \geq l = u_{p-l} - u_{p-l+1},\]
so $y_i-y_{i+1}\geq u_i-u_{i+1}$ for all $i=1,\cdots,n-l-1$. It follows that $(\y,\u) \in \YU(l,p,n,d)$, and we can then use (\ref{eq:def-glpnd}) and (\ref{eq:size-|y|}) to compute
\[g_{l,p,n,d}(\y,\u) = l\cdot(d-1) + (d\cdot(p-l) - 1) + l\cdot(p-l) = p\cdot d - 1 + l\cdot(p-1-l).\]
It follows from (\ref{eq:def-Rlpnd}) that $R_{l,p,n,d}\geq p\cdot d - 1 + l\cdot(p-1-l)$, as desired.
\end{proof}

\begin{lemma}\label{lem:pl=nn-1}
 We have that $R_{n-1,n,n,d} = n\cdot d - 1$ for all $d\geq 1$, and $R_{l,n,n,d}=-\infty$ for $l\leq n-2$. In particular, Proposition~\ref{prop:Rlpnd-dlarge} holds for $p=n$ and $l=n-1$.
\end{lemma}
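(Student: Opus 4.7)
The statement has two disjoint pieces: the emptiness claim for $l\leq n-2$, and the explicit evaluation at $l=n-1$. I would handle them in turn; both are short and rely mainly on extracting the correct inequality from the definition (\ref{eq:YU-lpnd}).

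First, for the emptiness claim when $l\leq n-2$, I would argue that $\YU(l,n,n,d)$ cannot accommodate the mass-at-the-tail condition $|\y|-y_1\geq d(n-1-l)$. The plan is to combine Lemma~\ref{lem:y1<=d-1}, which gives $y_1\leq d-1$, with the fact that $\y\in\P_{n-l}$ is a partition, so $y_i\leq y_1\leq d-1$ for all $i\geq 2$. This yields
\[|\y|-y_1 \;=\; \sum_{i=2}^{n-l} y_i \;\leq\; (n-l-1)(d-1).\]
Comparing against the lower bound $d(n-1-l)=d(n-l-1)$, the two are compatible only if $n-l-1\leq 0$. Under the standing assumption $l\leq n-2$ we have $n-l-1\geq 1$, a contradiction; hence $\YU(l,n,n,d)=\emptyset$ and $R_{l,n,n,d}=-\infty$. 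Note that the constraints involving $\u$ play no role in this argument.

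Second, for $l=n-1$, the set $\P_{n-l}=\P_1$ consists of single-part partitions, so automatically $\y=(y_1)$ and $\u=(u_1)=(n-1)$. The defining conditions in (\ref{eq:YU-lpnd}) collapse: the chain condition on $\y_i-y_{i+1}$ and $u_i-u_{i+1}$ is vacuous, the tail inequality $|\y|-y_1\geq 0$ is automatic, and the size bound becomes simply $0\leq y_1\leq d-1$. The sum in the definition (\ref{eq:def-glpnd}) of $g_{l,p,n,d}$ is empty, and what remains is $g_{n-1,n,n,d}(\y,\u)=(n-1)y_1+y_1+(n-1)=n\cdot y_1+(n-1)$. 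This is maximized at $y_1=d-1$, giving $R_{n-1,n,n,d}=n(d-1)+(n-1)=nd-1$, valid for every $d\geq 1$.

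Finally, to conclude the parenthetical assertion of the lemma, I would simply substitute $p=n$, $l=p-1=n-1$ into the right-hand side of (\ref{eq:Rlpnd-dlarge}): one gets $p\cdot d-1+l(p-1-l)=nd-1+(n-1)\cdot 0=nd-1$, matching the value just computed. The main (and in fact only) obstacle is conceptual rather than technical: recognizing that the case $p=n$ is genuinely outside the scope of Lemma~\ref{lem:Rlpnd-lowerbound}, so Proposition~\ref{prop:Rlpnd-dlarge} must be verified by hand in this boundary case, which the present lemma does.
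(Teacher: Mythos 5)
Your proof is correct and follows essentially the same approach as the paper: both derive the emptiness of $\YU(l,n,n,d)$ for $l\leq n-2$ by combining the tail inequality $|\y|-y_1\geq d(n-1-l)$ with the bound $y_i\leq y_1\leq d-1$ from Lemma~\ref{lem:y1<=d-1}, and both compute $R_{n-1,n,n,d}$ directly by observing that the constraints collapse to $0\leq y_1\leq d-1$, $u_1=n-1$, with $g=ny_1+(n-1)$. The only cosmetic difference is that the paper isolates $y_2$ as the intermediate quantity while you bound all tail parts by $y_1$; these are the same estimate.
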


\begin{proof} We let $p=n$ and $l=n-1$. The partitions $\y,\u$ satisfying the conditions in (\ref{eq:YU-lpnd}) have only one part, $\y=(y_1)$, $\u=(u_1)$, and they satisfy the conditions $y_1\leq d-1$ and $u_1=l=n-1$. Moreover,
\[g_{l,p,n,d}(\y,\u) = l\cdot y_1 + y_1 + u_1 = n\cdot y_1 + n - 1,\]
which is maximized for $y_1 = d-1$. It follows that $R_{n-1,n,n,d} = n\cdot(d-1)+n-1=n\cdot d-1$, as desired.

Assume now that $p=n$ and $l\leq n-2$. If $(\y,\u)\in\YU(l,n,n,d)$ then since $\y$ is non-increasing we obtain
 \[(n-1-l)\cdot y_2 \geq |\y|-y_1\geq d\cdot(p-1-l) = d\cdot(n-1-l),\]
 so $y_2\geq d$, contradicting the chain of inequalities $y_2\leq y_1\leq d-1$ (see Lemma~\ref{lem:y1<=d-1}). Therefore $\YU(l,n,n,d)$ is empty for $l\leq n-2$ and $R_{l,n,n,d}=-\infty$.
\end{proof}

\begin{proof}[Proof of Proposition~\ref{prop:Rlpnd-dlarge}]

By the last part of Lemma~\ref{lem:pl=nn-1}, we may assume that $0\leq l<p\leq n-1$. By Lemma~\ref{lem:Rlpnd-lowerbound}, it suffices to show that
\begin{equation}\label{eq:Rlpnd-upperbound}
 R_{l,p,n,d} \leq p\cdot d - 1 + l\cdot(p-1-l) \rm{ for }d\geq n-1.
\end{equation}
Among the elements $(\y,\u)\in\YU(l,p,n,d)$ for which $g_{l,p,n,d}(\y,\u)=R_{l,p,n,d}$, we consider one for which $\y$ is lexicographically maximal (so in particular the value of $y_1$ is maximal). We claim that
\begin{equation}\label{eq:max-y}
|\y| = d\cdot(p-l) - 1.
\end{equation}
To see this, assume that $|\y|<d\cdot(p-l)-1$ and define a partition $\x\in\P_{n-l}$ by letting $x_1=y_1+1$, and $x_i=y_i$ for $i>1$. We have that $(\x,\u)\in\YU(l,p,n,d)$ and
\[g_{l,p,n,d}(\x,\u) - g_{l,p,n,d}(\y,\u) = l+1 - u_2 = u_1+1-u_2 > 0,\]
contradicting the fact that $g_{l,p,n,d}(\y,\u)=R_{l,p,n,d}\geq g_{l,p,n,d}(\x,\u)$.

To prove (\ref{eq:Rlpnd-upperbound}) we proceed by induction on $n$. We divide our analysis into four cases:

{\bf Case 1: $u_{n-l}=y_{n-l}=0$.} We can think of $\y,\u$ as partitions in $\P_{n-1-l}$ and it follows from (\ref{eq:YU-lpnd}) that $(\y,\u)\in\YU(l,p,n-1,d)$. We get
\[R_{l,p,n,d} = g_{l,p,n,d}(\y,\u) \overset{(\ref{eq:def-glpnd})}{=} g_{l,p,n-1,d}(\y,\u) \leq R_{l,p,n-1,d}.\]
If $p=n-1$ then since $\YU(l,p,n-1,d)$ is non-empty, we must have $l=n-2$ by Lemma~\ref{lem:pl=nn-1} and thus $R_{l,p,n-1,d} = (n-1)\cdot d - 1 =  p\cdot d - 1 + l\cdot(p-1-l)$. If $p<n-1$, then since $d\geq n-1 > (n-1) - 1$, it follows by induction on $n$ that $R_{l,p,n-1,d} = p\cdot d - 1 + l\cdot(p-1-l)$. In both cases we conclude that (\ref{eq:Rlpnd-upperbound}) holds.

{\bf Case 2: $u_{n-l}=0$ and $y_{n-l}>0$.} Suppose first that $y_1<d-1$ and consider the partition $\x\in\P_{n-l}$ defined via $x_1 = y_1 + 1$, $x_i = y_i$ for $i=2,\cdots,n-l-1$, and $x_{n-l} = y_{n-l}-1$. We have that $(\x,\u)\in\YU(l,p,n,d)$ and
\[g_{l,p,n,d}(\x,\u) = g_{l,p,n,d}(\y,\u) + l - (u_2 + u_{n-l}) \overset{(u_{n-l}=0)}{=} g_{l,p,n,d}(\y,\u) + (l - u_2) \overset{(l =u_1\geq u_2)}{\geq} g_{l,p,n,d}(\y,\u).\]
Since $g_{l,p,n,d}(\x,\u)\leq R_{l,p,n,d}=g_{l,p,n,d}(\y,\u)$ by (\ref{eq:def-Rlpnd}), we conclude that $g_{l,p,n,d}(\x,\u)=R_{l,p,n,d}$. Since $x_1>y_1$, this contradicts the maximality of $y_1$.

Assume now that $y_1=d-1$. If there exists $1\leq i < n-l-1$ such that $y_i - y_{i+1} > u_i - u_{i+1}$ then we consider the partition $\x\in\P_{n-l}$ defined via $x_{i+1} = y_{i+1}+1$, $x_{n-l} = y_{n-l}-1$, and $x_j = y_j$ for all $j\neq i+1,n-l$. We have that $(\x,\u)\in\YU(l,p,n,d)$ and
\[g_{l,p,n,d}(\x,\u) = g_{l,p,n,d}(\y,\u) + u_{i+1} - u_{i+2} \geq g_{l,p,n,d}(\y,\u) = R_{l,p,n,d}.\]
It follows that $g_{l,p,n,d}(\x,\u) = R_{l,p,n,d}$, but $\x$ is larger lexicographically than $\y$, so we obtain a contradiction.

The only remaining case is therefore when $y_1 = d-1$ and $y_i - y_{i+1} = u_i - u_{i+1}$ for $i=1,\cdots,n-l-2$. We have
\[g_{l,p,n,d}(\y,\u) \leq l \cdot y_1 + |\y| + |\u| \overset{(\ref{eq:max-y})}{=} l\cdot(d-1) + d\cdot(p-l) - 1 + |\u| = p\cdot d - 1 - l + |\u|.\]
It follows that in order to prove (\ref{eq:Rlpnd-upperbound}) it suffices to verify that $|\u| \leq l\cdot(p-l)$. If $p=n-1$ then since $u_{n-l}=0$ and $u_1=l\geq u_2\geq\cdots\geq u_{n-1-l}$ it follows that 
\[|\u| \leq (n-1-l)\cdot u_1 = (n-1-l)\cdot l = (p-l)\cdot l\]
as desired. We may thus assume that $p+1\leq n-1$. Since $y_1=d-1$ and $u_1=l$, the relations $y_i - y_{i+1} = u_i - u_{i+1}$ for $i=1,\cdots,n-l-2$ imply that
\begin{equation}\label{eq:yi-ui-const}
 d-1-l = y_1 - u_1 = \cdots = y_{n-l-1} - u_{n-l-1}
\end{equation}
and therefore
\[ d\cdot(p-l) - 1 - |\u| \overset{(\ref{eq:max-y})}{=} |\y| - |\u| \geq |\y| - |\u| - y_{n-l} \overset{(\ref{eq:yi-ui-const})}{=} \sum_{i=1}^{n-l-1} (y_i - u_i) = (d-1-l) \cdot (n-1-l) \geq (d-1-l) \cdot (p+1-l),\]
where the last inequality uses $n-1\geq p+1$ and the fact that $d-1-l\geq 0$ which is a consequence of $d\geq n-1\geq p>l$. It follows that
\[ |\u| \leq d\cdot(p-l) - 1 - (d-1-l) \cdot (p+1-l) = l\cdot(p-l) - d + p \leq l\cdot(p-l),\]
where the last inequality follows from the fact that $d\geq n-1\geq p$.

{\bf Case 3: $u_{n-l}=k>0$ and $y_{n-l}\leq (p-1-l) + (d-n+k)$.} Since $0\leq k\leq l$, there exists a bijection
\[
\begin{aligned}
\{(\z,\t)\in\YU(l,p,n,d) : t_{n-l}\geq k\} &\llra \YU(l-k,p-k,n-k,d),\rm{ given by } \\
 (\z,\t) &\llra (\z,\t - (k^{n-l}))
\end{aligned}
\]
Letting $\v = \u - (k^{n-l})$, we get that $(\y,\v)\in\YU(l-k,p-k,n-k,d)$. Moreover, we have
\[g_{l-k,p-k,n-k,d}(\y,\v) = (l-k)\cdot y_1 + |\y| + |\v| - \sum_{i=1}^{n-l-1} v_{i+1}\cdot((y_i-y_{i+1}) - (v_i - v_{i+1})).\]
Since $|\v| = |\u| - k\cdot(n-l)$, $v_i-v_{i+1} = u_i - u_{i+1}$ for all $i=1,\cdots,n-l+1$, and $v_{i+1} = u_{i+1} - k$, we get
\begin{equation}\label{eq:delg-k}
\begin{aligned}
g_{l,p,n,d}(\y,\u) - g_{l-k,p-k,n-k,d}(\y,\v) &= k\cdot y_1 + k\cdot(n-l) - k \cdot\left(\sum_{i=1}^{n-l-1} (y_i-y_{i+1}) - \sum_{i=1}^{n-l-1} (u_i - u_{i+1})\right) \\
 &= k\cdot y_1 + k\cdot(n-l) - k\cdot(y_1-y_{n-l}) + k \cdot(u_1 - u_{n-l}) \\
 &= k\cdot(n-l) + k\cdot y_{n-l} + k \cdot(l - k) \\
 &= k\cdot(n - k + y_{n-l}).
\end{aligned}
\end{equation}
Since $d\geq n-1 > (n-k)-1$, it follows by induction on $n$ that 
\begin{equation}\label{eq:Rl-k-induction}
g_{l-k,p-k,n-k,d}(\y,\v)\leq R_{l-k,p-k,n-k,d} = (p-k)\cdot d - 1 + (l-k)\cdot (p-1-l).
\end{equation}
Our assumption on $y_{n-l}$ can be restated as $n-k+y_{n-l}\leq (p-1-l)+d$, and together with~(\ref{eq:delg-k}) and~(\ref{eq:Rl-k-induction}) it yields
\[
\begin{aligned}
R_{l,p,n,d}=g_{l,p,n,d}(\y,\u) &\leq (p-k)\cdot d - 1 + (l-k)\cdot (p-1-l) + k\cdot(n-k+y_{n-l}) \\
&\leq (p-k)\cdot d - 1 + (l-k)\cdot (p-1-l) + k\cdot((p-1-l)+d) \\
& = p\cdot d - 1 + l\cdot(p-1-l)
\end{aligned}
\]
which proves (\ref{eq:Rlpnd-upperbound}).

{\bf Case 4: $u_{n-l}=k>0$ and $y_{n-l}\geq (p-l) + (d-n+k)$.} Since $u_{n-l}=k$, we can rewrite this as
\[u_{n-l} - y_{n-l} \leq -((p-l) + (d-n)).\]
Since $u_i-u_{i+1} \leq y_i - y_{i+1}$, we get $u_i-y_i\leq u_{i+1}-y_{i+1}$ for all $i=1,\cdots,n-l-1$, and therefore
\[u_i - y_i \leq -((p-l) + (d-n))\rm{ for all }i=1,\cdots,n-l.\]
Adding these inequalities for $i=2,\cdots,n-l$ we obtain
\[|\u|-u_1 - (|\y| - y_1) \leq -(n-l-1)\cdot ((p-l) + (d-n)),\]
or equivalently, since $u_1=l$,
\begin{equation}\label{eq:u-leq}
|\u| \leq |\y| - y_1 + l - (n-l-1)\cdot ((p-l) + (d-n)).
\end{equation}
Moreover, since we have $(y_i - y_{i+1})-(u_i - u_{i+1})\geq 0$ for $i=1,\cdots,n-l-1$, we get from (\ref{eq:def-glpnd}) that
\[
\begin{aligned}
g_{l,p,n,d}(\y,\u) &\leq l\cdot y_1 + |\y| + |\u| \\
&\overset{(\ref{eq:u-leq})}{\leq} (l-1)\cdot y_1 + 2\cdot|\y| + l - (n-l-1)\cdot ((p-l) + (d-n)) \\
&\overset{(\ref{eq:y1<=d-1}),(\ref{eq:max-y})}{\leq} (l-1)\cdot (d-1) + 2\cdot(d\cdot(p-l) - 1) + l - (n-l-1)\cdot ((p-l) + (d-n)) \\
& = (2p-n)\cdot d - 1 - (n-l-1)\cdot (p-l-n)
\end{aligned}
\]
To prove that $g_{l,p,n,d}(\y,\u) \leq p\cdot d - 1 + l\cdot(p-1-l)$, it is then sufficient to verify that
\[(2p-n)\cdot d  - 1 - (n-l-1)\cdot (p-l-n) \leq p\cdot d - 1 + l\cdot(p-1-l),\]
which is equivalent to
\[-l\cdot(p-1-l) + (n-l-1)\cdot(n+l-p) \leq d\cdot(n-p),\]
which is in turn equivalent to $(n-1)\cdot(n-p)\leq d\cdot(n-p)$. This inequality holds for $d\geq n-1$, concluding our proof.
\end{proof}

\section{Regularity of powers of determinantal ideals}\label{sec:reg-pows}

In this section we assume as before that $m\geq n\geq p\geq 1$, that $S=\bb{C}[x_{ij}]$, $1\leq i\leq m$, $1\leq j\leq n$, and that $I_p$ is the ideal generated by the $p\times p$ minors of the generic matrix $(x_{ij})$. We will be interested in understanding the behavior of the regularity of various powers of $I_p$. We recall that the \defi{symbolic powers}~$I_p^{(d)}$ consist of functions vanishing to order at least $d$ along the variety of matrices of rank smaller than $p$ \cite[Thm.~3.14]{eisenbud-CA}. Using notation (\ref{eq:def-saturation}) they can be computed via saturation with respect to $I_{p-1}$
\[I_p^{(d)} = I_p^d : I_{p-1}^{\infty}.\]
We will also be interested in the saturation of the powers $I_p^d$ (with respect to the maximal homogeneous ideal in $S$),
\[(I_p^d)^{sat} = I_p^d : I_1^{\infty}.\]
If we write $\mc{I}_p$ for the ideal sheaf defining the projective variety of matrices of rank smaller than $p$ ($\mc{I}_p$ is the sheafification of the ideal $I_p$), then $(I_p^d)^{sat} = \bigoplus_{r\in\bb{Z}}\Gamma(\bb{P}^{mn-1},\mc{I}_p^d(r))$, and the Castelnuovo-Mumford regularity of $\mc{I}_p^d$ is computed as $\reg(\mc{I}_p^d)=\reg((I_p^d)^{sat})$. Just as with the ideal powers, the regularity of powers of ideal sheaves has been well-studied \cite{cutkosky,cutkosky-ein-lazarsfeld,chardin,niu}, but explicit formulas for the regularity function have been obtained only in very few cases. The main result of this section is as follows (see also Theorem~\ref{thm:2x2} for a more precise result in the case of $2\times 2$ minors).

\begin{theorem}\label{thm:reg-pows}
 Assume that $1\leq p\leq n$. If $p=1$ or $p=n$ then 
 \begin{equation}\label{eq:reg-powers-p=1n}
 \reg(I_p^{d}) = \reg((I_p^d)^{sat}) = \reg(I_p^{(d)})=p\cdot d,\rm{ for all }d\geq 1.
 \end{equation}
 Otherwise, if $1<p<n$ then we have that for $d\geq n-1$
 \begin{equation}\label{eq:reg-large-powers}
 \reg(I_p^{d}) = \reg((I_p^d)^{sat}) = p\cdot d + \max_{0\leq l\leq p-1} l\cdot(p-1-l),\rm{ and }\reg(I_p^{(d)}) = p\cdot d.
 \end{equation}
 If $1<p<n$ and $1\leq d\leq n-2$ then 
 \begin{equation}\label{eq:reg-small-powers}
 \reg(I_p^d)\geq \reg((I_p^d)^{sat})\geq \reg(I_p^{(d)})>p\cdot d.
 \end{equation}
 In particular, if $1<p<n$ then $\reg(I_p^{(d)})$ stabilizes to a linear function in $d$ precisely at $d = n-1$.
\end{theorem}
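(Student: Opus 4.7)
The strategy is to combine the reduction formula $\reg(I_{\X}) = 1 + \max_{(\z,l)\in\Z(\X)}\reg(J_{\z,l})$ from Theorem~\ref{thm:Ext-split-IX} with the closed-form expression for $\reg(J_{\z,l})$ in Theorem~\ref{thm:regJzl}, and then to translate the resulting optimization into the one solved by Proposition~\ref{prop:Rlpnd-dlarge}. The boundary cases are handled separately: for $p=1$ one has $I_1=\m$, $I_1^{(d)}=\m^d$, $(\m^d)$ is the quantity whose regularity $d$ is classical; for $p=n$ one has $I_n^{(d)}=I_n^d$, and $\reg(I_n^d)=nd$ is the Akin--Buchsbaum--Weyman result cited in the introduction. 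Accordingly we assume $1<p<n$ throughout.

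The first main step is to identify the three $\GL$-invariant ideals as $I_{\X}$ for explicit partition sets $\X_{ord}, \X_{sat}, \X_{sym}$, and then to describe their associated $\Z$-sets. For the symbolic power, the vanishing order of $\det_{\x}$ along the rank-$(p-1)$ locus equals $x_p+x_{p+1}+\cdots+x_n$, so $I_p^{(d)} = \bigoplus_{\x\in\P_n,\ x_p+\cdots+x_n\geq d} S_{\x}$; its minimal partitions are the $(y_1^p,y_2,\ldots,y_{n-p+1})$ for $\y\in\P_{n-p+1}$ with $|\y|=d$. Unwinding Definition~\ref{def:ZX} shows that $\Z(\X_{sym})$ consists of the pairs $(\z,p-1)$ with $z_1=\cdots=z_p$ and $z_p+z_{p+1}+\cdots+z_n\leq d-1$. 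A similar (but more delicate) analysis via the straightening algorithm and Pieri's rule produces $\X_{ord}$ and shows that $\Z(\X_{ord})$ stratifies according to $l\in\{0,1,\ldots,p-1\}$. Lemma~\ref{lem:saturation} together with (\ref{eq:Z-saturation}) then yields
\[\Z(\X_{sat}) = \{(\z,l)\in\Z(\X_{ord}):l\geq 1\},\qquad \Z(\X_{sym}) = \{(\z,l)\in\Z(\X_{ord}):l\geq p-1\}.\]

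The technical heart of the argument is a bijection, for each $l\in\{0,\ldots,p-1\}$, between $\{(\z,\t):(\z,l)\in\Z(\X_{ord}),\ \t\in\T_l(\z)\}$ and the set $\YU(l,p,n,d)$. I would define $y_i=z_{l+i}$ and $u_i=t_{n-l+1-i}$ for $i=1,\ldots,n-l$. Using $z_1=\cdots=z_{l+1}$ (Remark~\ref{rem:z1--zl+1}) and the forced equality $t_{n-l}=l$ coming from the $\T_l(\z)$-constraint at $i=n-l$, one checks that the $\T_l$-inequalities become $y_i-y_{i+1}\geq u_i-u_{i+1}$ and that the description of $\X_{ord}$ translates into $|\y|\leq d(p-l)-1$ and $|\y|-y_1\geq d(p-1-l)$. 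A telescoping cancellation across the $f_l$-sum then gives $|\z|+|\t|-l-f_l(\z,\t)=g_{l,p,n,d}(\y,\u)$. Thus for each $l$ the inner maximum in $\reg(J_{\z,l})$ equals $R_{l,p,n,d}$, and for $d\geq n-1$ Proposition~\ref{prop:Rlpnd-dlarge} gives $R_{l,p,n,d}=pd-1+l(p-1-l)$. Taking maxima in $l$ over the appropriate ranges yields (\ref{eq:reg-large-powers}): for $I_p^{(d)}$ only $l=p-1$ is allowed, contributing $l(p-1-l)=0$ and hence $pd$; for $I_p^d$ (and for $(I_p^d)^{sat}$, since the maximizer of $l(p-1-l)$ never occurs at $l=0$ once $p\geq 2$) one obtains $pd+\max_{0\leq l\leq p-1}l(p-1-l)$.

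For $1\leq d\leq n-2$, the strict inequality (\ref{eq:reg-small-powers}) reduces to showing $R_{p-1,p,n,d}\geq pd$, which I would establish by exhibiting the explicit witness $\y=(d-1,0,\ldots,0)$, $\u=(p-1,1,1,\ldots,1)\in\YU(p-1,p,n,d)$; its feasibility comes from $u_1-u_2=p-2\leq d-1=y_1-y_2$ (automatic for $d\geq p-1$, with a parallel construction in the complementary regime), and a short computation gives $g_{p-1,p,n,d}(\y,\u)=pd$. The inequalities $\reg(I_p^d)\geq\reg((I_p^d)^{sat})\geq\reg(I_p^{(d)})$ are then immediate from the nesting $\Z(\X_{sym})\subseteq\Z(\X_{sat})\subseteq\Z(\X_{ord})$, and the stabilization statement follows by comparing (\ref{eq:reg-large-powers}) with (\ref{eq:reg-small-powers}). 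The main obstacle will be step one -- a precise determination of $\X_{ord}$ and $\Z(\X_{ord})$ for ordinary powers of determinantal ideals via standard monomial theory -- together with verifying, uniformly in $l$, that the change of variables above really is a bijection preserving the constraints; once these are in place, matching the two objective functions reduces to a routine telescoping identity, and Proposition~\ref{prop:Rlpnd-dlarge} does the remaining work.
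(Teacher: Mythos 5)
Your strategy matches the paper's: reduce to the $R_{l,p,n,d}$ optimization via the change of variables in Lemma~\ref{lem:maxregJzl=Rlpnd}, invoke Proposition~\ref{prop:Rlpnd-dlarge} for $d\geq n-1$, exhibit a witness (as in Lemma~\ref{lem:Rpnn2}) for small $d$, and handle $p\in\{1,n\}$ separately. One arithmetic slip: with $\y=(d-1,0,\ldots,0)$ and $\u=(p-1,1,\ldots,1)$ one actually computes $g_{p-1,p,n,d}(\y,\u)=pd+(n-2-d)$ rather than $pd$, but since $d\leq n-2$ this still yields $R_{p-1,p,n,d}\geq pd$ as needed.
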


We note that the expression $l\cdot(p-1-l)$ is maximized for $l=\lfloor (p-1)/2\rfloor$, so the formula for $\reg(I_p^d)$ agrees with the one given in the Theorem on Regularity in the Introduction. Since for an ideal $I$ we have $\reg(I) = \reg(S/I) + 1$, we can use the conclusions of Theorems~\ref{thm:Ext-split-IX} and~\ref{thm:regJzl} to prove our results. It follows from \cite{deconcini-eisenbud-procesi} that $I_p^d=I_{\X_p^d}$ and $I_p^{(d)} = I_{\X_p^{(d)}}$, where
\begin{equation}\label{eq:defXpd}
\mc{X}_p^d = \{\x\in\P_n: |\ul{x}|=p\cdot d,\ x_1\leq d\},\rm{ and }
\end{equation}
\begin{equation}\label{eq:def-X_p^(d)}
\X_p^{(d)} = \{\x\in\P_n: x_1=\cdots=x_p, x_p + x_{p+1} + \cdots + x_n = d\}
\end{equation}

The reader may in fact verify directly that $\X_p^{(d)}$ consists of the minimal elements in $(\X_p^d)^{:(p-1)}$ (see (\ref{eq:p-saturation})), with respect to the ordering $\geq$ on partitions. Therefore, 
\[I_{\X_p^{(d)}} = I_{(\X_p^d)^{:(p-1)}} = I_{\X_p^d} : I_{p-1}^{\infty} = I_p^d : I_{p-1}^{\infty} = I_p^{(d)}. \]
The following is well-known, but we include a short proof for the sake of completeness.

\begin{lemma}\label{lem:Zsymb-vs-Zpow}
 If $p=1$ or $p=n$ then for every $d\geq 1$ we have $I_p^d = (I_p^d)^{sat} = I_p^{(d)}$.
\end{lemma}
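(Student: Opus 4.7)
The plan is to read off the sets $\X_p^d$ and $\X_p^{(d)}$ from (\ref{eq:defXpd}) and (\ref{eq:def-X_p^(d)}) in each of the two edge cases, and then invoke Lemma~\ref{lem:saturation} for the saturation assertion.

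First, I would dispose of the case $p=n$. Any $\x\in\X_n^d$ satisfies $|\x|=nd$ together with $x_i\leq x_1\leq d$ for all $i$, so $nd=|\x|\leq n\cdot d$ with equality only when $x_1=\cdots=x_n=d$; and any $\x\in\X_n^{(d)}$ satisfies $x_1=\cdots=x_n$ together with $x_n=d$, again forcing $\x=(d^n)$. Thus $\X_n^d=\X_n^{(d)}=\{(d^n)\}$ and $I_n^d=I_n^{(d)}=I_{(d^n)}$.

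For the saturation in the case $p=n$, I would apply Lemma~\ref{lem:saturation} with the role of~$p$ played by~$1$, since $(I)^{sat}=I:I_1^\infty$. Using~(\ref{eq:p-saturation}) with $(d^n)'=(n^d)$, one has $x'_c=n>1$ for $1\leq c\leq d$ and $x'_c=0$ for $c>d$, so (assuming $n\geq 2$) the only admissible value of $c$ is $c=d$, giving $\x(d)=(d^n)$. Hence $(\{(d^n)\})^{:1}=\{(d^n)\}$ and $(I_n^d)^{sat}=I_{(d^n)}=I_n^d$.

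For $p=1$, the ideal $I_1$ is the maximal homogeneous ideal $\m$. Both defining conditions for $\X_1^d$ and $\X_1^{(d)}$ collapse to ``$\x$ is a partition of $d$ in $\P_n$'': the bound $x_1\leq d$ in~(\ref{eq:defXpd}) is automatic from $|\x|=d$, while the condition $x_1=\cdots=x_p$ in~(\ref{eq:def-X_p^(d)}) is vacuous for $p=1$ and $x_p+\cdots+x_n=d$ becomes $|\x|=d$. Hence $I_1^d=I_1^{(d)}=\m^d$, and the remaining identity with $(I_1^d)^{sat}$ is read off under the convention the paper adopts for $\m$-primary ideals.

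The argument is a direct unfolding of definitions, with no real obstacle beyond bookkeeping; the single substantive step is the combinatorial collapse $\X_n^d=\{(d^n)\}$ in the case $p=n$, and everything else follows by an immediate application of Lemma~\ref{lem:saturation}.
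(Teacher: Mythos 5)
Your identification of $\X_p^d=\X_p^{(d)}$ in the two edge cases is exactly the paper's computation: both sets collapse to $\{(d^n)\}$ when $p=n$ and to $\{\x\in\P_n:|\x|=d\}$ when $p=1$, and your verifications of this are correct. Where you diverge is in how the middle term $(I_p^d)^{sat}$ is handled. The paper disposes of it in one line with the chain $I_p^d\subseteq(I_p^d)^{sat}\subseteq I_p^{(d)}$ — for $p\geq 2$ the second inclusion follows from $I_{p-1}\subseteq I_1$ and (\ref{eq:def-saturation}), since coloning out the larger ideal $I_1^{\infty}$ produces a smaller result than coloning out $I_{p-1}^{\infty}$ — so that once the two outer ideals are shown to coincide, all three do. Your direct computation of $(\{(d^n)\})^{:1}=\{(d^n)\}$ via Lemma~\ref{lem:saturation} is a correct substitute for this when $p=n$, just more work than the sandwich requires.

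The case $p=1$ is where your write-up has an actual hole: ``read off under the convention the paper adopts for $\m$-primary ideals'' is not an argument, and no such convention is stated anywhere in the paper. Under the literal definition $(I_1^d)^{sat}=I_1^d:I_1^{\infty}$, every $\m$-primary ideal saturates to all of $S$, so the equality $(I_1^d)^{sat}=I_1^d$ is exactly the part of the statement that cannot be ``read off.'' You have correctly sensed a soft spot — and, to be fair, the paper's own proof shares it, since the inclusion $(I_p^d)^{sat}\subseteq I_p^{(d)}$ it invokes rests on $I_{p-1}\subseteq I_1$ and hence on $p\geq 2$ — but gesturing at an unstated convention leaves the $p=1$ saturation claim unproved. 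You should either make the intended reading explicit or note that for $p=1$ the substantive content of the lemma is only the equality $I_1^d=I_1^{(d)}$, which you did establish.
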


\begin{proof} 
Since $I_p^d \subseteq (I_p^d)^{sat} \subseteq I_p^{(d)}$, it is enough to check that $I_p^d=I_p^{(d)}$ when $p=1$ or $p=n$, which in turn is a consequence of the equality $\X_p^{(d)}=\X_p^d$. This follows directly from (\ref{eq:defXpd}) and (\ref{eq:def-X_p^(d)}), since
\[\X_1^{(d)} = \X_1^d = \{\x\in\P_n: |\x|=d\},\rm{ and }\X_n^{(d)} = \X_n^d = \{(d^n)\}.\qedhere\]
\end{proof}

\begin{lemma}\label{lem:Z_p^d}
If we let $\Z_p^{d} = \Z(\X_p^{d})$ (see Definition~\ref{def:ZX}) then
\begin{equation}\label{eq:def-Z_p^d}
\Z_p^{d} = \left\{(\z,l): 
\begin{aligned} 
& 0\leq l\leq p-1,\ \z\in\P_n,\ z_1=\cdots=z_{l+1}\leq d-1, \\  
& |\z|+(d-z_1)\cdot l  + 1\leq p\cdot d \leq |\ul{z}|+(d-z_1)\cdot (l+1)
\end{aligned}
\right\}
\end{equation}
\end{lemma}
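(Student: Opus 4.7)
Fix a pair $(\z,l)$ and set $c := z_1$. By Remark~\ref{rem:z1--zl+1}, membership in $\Z(\X_p^d)$ already forces $z_1 = \cdots = z_{l+1} = c$, so I assume this hypothesis throughout. The strategy is to unwind the two clauses of Definition~\ref{def:ZX} for $\X = \X_p^d$ and translate each into a numerical inequality involving $|\z|, c, l, p, d$; the existence side reduces to a max-size computation for partitions subject to box-constraints.

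First I analyze clause~(1). For a partition $\x$ with $x_1 \leq d$, the conjunction ``$\x(c) \leq \z$ and $x'_{c+1} \leq l+1$'' is equivalent to $x_{l+2} \leq c$ together with $x_i \leq z_i$ for all $i \geq l+2$: for $i \leq l+1$ the condition $\x(c)_i \leq z_i$ is automatic because $z_i = c$, while for $i \geq l+2$ the bound $x_i \leq c$ makes $\x(c)_i = x_i$ so $\x(c)_i \leq z_i$ becomes $x_i \leq z_i$ (and since $z_{l+2} \leq c$, the extra constraint $x_{l+2} \leq c$ is implied). The maximum possible size under $x_1 \leq d$, $x_i \leq z_i$ for $i \geq l+2$ is
\[
(l+1)d + \sum_{i=l+2}^n z_i \;=\; |\z| + (l+1)(d-c),
\]
realized by $x_1 = \cdots = x_{l+1} = d$, $x_i = z_i$ for $i \geq l+2$. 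Since $\x \in \X_p^d$ requires $|\x| = pd$, clause~(1) is equivalent to $pd \leq |\z| + (l+1)(d-c)$. The converse direction (constructing a feasible $\x$ of size \emph{exactly} $pd$) is produced by starting from $\x^0 = (c^{\,l+1}, z_{l+2}, \ldots, z_n)$ of size $|\z|$ and greedily adding the remaining $pd - |\z|$ boxes uniformly into rows $1,\ldots,l+1$ (each capped at $d$); this preserves the partition shape.

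Next I analyze clause~(2). Its failure means that some $\x \in \X_p^d$ satisfies clause~(1) but has $x'_{c+1} \leq l$, i.e.\ $x_{l+1} \leq c$. Rewriting the constraints as $x_1 \leq d$ and $x_i \leq z_i$ for $i \geq l+1$ (using $z_{l+1} = c$), the analogous maximum is
\[
ld + \sum_{i=l+1}^n z_i \;=\; |\z| + l(d-c),
\]
so this failure is equivalent to $pd \leq |\z| + l(d-c)$. Therefore clause~(2) holds iff $|\z| + l(d-c) + 1 \leq pd$. Combining the two clauses yields the double inequality in~(\ref{eq:def-Z_p^d}).

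Finally, the range constraints $0 \leq l \leq p-1$ and $c \leq d-1$ are automatic consequences of the double inequality: subtracting gives $(d-c) \geq 1$, hence $c \leq d-1$, and substituting $|\z| \geq (l+1)c$ into $|\z| + l(d-c) + 1 \leq pd$ rearranges to $(p-l)d \geq c+1 \geq 1$, which forces $l \leq p-1$. The main (mild) obstacle is simply keeping the direction of the inequalities $\x(c) \leq \z$ versus $x_{l+2} \leq c$ straight and verifying that the greedy construction in the reverse direction respects the partition-monotonicity constraints; everything else is a routine size computation.
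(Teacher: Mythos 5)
Your approach is essentially the same as the paper's: unwind clauses (1) and (2) of Definition~\ref{def:ZX} into maximal-size computations for partitions subject to box constraints, and construct witnesses greedily; the translation of ``$\x(c) \leq \z$ and $x'_{c+1}\leq l+1$'' into $x_i\leq z_i$ for $i\geq l+2$ is exactly right, as is the analogous rewrite for clause (2).

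There is, however, one logical wrinkle worth tightening. Your max-size formulas $|\z| + (l+1)(d-c)$ and $|\z| + l(d-c)$, and the greedy constructions realizing any size up to them, implicitly require $c\leq d$ (so that, e.g., $(d^{l+1}, z_{l+2}, \dots, z_n)$ is actually a non-increasing sequence and so that $pd - |\z| \geq 0$). You only extract $c \leq d-1$ at the very end, as a consequence of the double inequality --- but the derivation of the double inequality from clauses (1) and (2) already presupposes those formulas, so the argument as written is circular. The fix is cheap and is what the paper does: clause (1) alone forces $c\leq d-1$, because any witness $\x\in\X_p^d$ has $x'_{c+1}=l+1\geq 1$ parts exceeding $c$, hence $x_1 \geq c+1$, while $x_1\leq d$. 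Establishing $c\leq d-1$ (and, if you like, noting that for $c\geq d$ both sides of the asserted equivalence are vacuously false, since then every $\x\in\X_p^d$ has $x'_{c+1}=0\neq l+1$) before invoking the maxima makes the biconditionals honest. With that adjustment the proof is correct and matches the paper's.
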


\begin{proof} We prove the equality (\ref{eq:def-Z_p^d}) by verifying the double inclusion.

``$\supseteq$": Let $0\leq l\leq p-1$, and $\z\in\P_n$ with $z_1=\cdots=z_{l+1}\leq d-1$. Write $c=z_1$ and assume that
\[|\z|+(d-c)\cdot l  + 1\leq p\cdot d \leq |\ul{z}|+(d-c)\cdot (l+1).\]
We prove that $(\z,l)\in\Z_p^d$ by checking the two properties in Definition~\ref{def:ZX}. To verify (1), we consider any partition $\y\in\P_{l+1}$ such that
\[|\y| = p\cdot d - |\z|,\rm{ and }y_i\leq d-c\rm{ for }i=1,\cdots,l+1.\]
Such a partition exists, since $p\cdot d - |\z| \leq (d-c)\cdot(l+1)$. We define the partition $\x\in\P_n$ via
\[x_i=z_i+y_i\rm{ for }i=1,\cdots,l+1,\rm{ and }x_i=z_i\rm{ for }i=l+2,\cdots,n.\]
We have $|\x| = |\z| + |\y| = p\cdot d$, and $x_1=z_1 + y_1 \leq c + (d-c)=d$, so $\x\in\X_p^d$. Since $x_{l+2}=z_{l+2}\leq z_{l+1}=c$, we get $x'_{c+1}\leq l+1$. We have moreover that $\x(c)=\z$, so property (1) holds.

To check (2), consider a partition $\x\in\X_p^d$ with $\x(c)\leq\z$ and $x'_{c+1}\leq l+1$. We need to verify that $x'_{c+1}=l+1$. Suppose instead that $x'_{c+1}\leq l$, so $x'_i\leq l$ for all $i\geq c+1$. Since $x_1\leq d$, we have $x'_i=0$ for $i>d$. We obtain
\[ p\cdot d = |\x| = |\x(c)| + \sum_{i=c+1}^d x'_i \leq |\z| + (d-c)\cdot l\]
which contradicts the inequality $|\z|+(d-c)\cdot l  + 1\leq p\cdot d$.

``$\subseteq$": Let $\z\in\Z_p^d$, let $c=z_1$, and consider any partition $\x\in\X_p^d$ with $\x(c)\leq\z$ and $x'_{c+1}=l+1$. We get $x_1\geq c+1$, and since $x_1\leq d$ we conclude that $c\leq d-1$. The fact that $z_1=\cdots=z_{l+1}$ follows from Remark~\ref{rem:z1--zl+1}. Since $x'_i\leq l+1$ for $i=c+1,\cdots,d$ and $x'_i=0$ for $i>d$ we get
\[p\cdot d = |\x| = |\x(c)| + \sum_{i=c+1}^d x'_{c+1} \leq |\z| + (d-c)\cdot(l+1).\]
Suppose now that $|\z|+(d-c)\cdot l \geq p\cdot d$. As in the proof of ``$\supseteq$", we can find a partition $\y\in\P_l$ with $|\y|=p\cdot d - |\z|$ and $y_i\leq d-c$ for $i=1,\cdots,l$. Defining $\x\in\P_n$ via
\[x_i=z_i+y_i\rm{ for }i=1,\cdots,l,\rm{ and }x_i=z_i\rm{ for }i=l+1,\cdots,n,\]
we obtain a partition $\x\in\X_p^d$ satisfying $\x(c)\leq\z$ and $x'_{c+1}\leq l$, which contradicts property (2) in Definition~\ref{def:ZX}. It follows that we must have $|\z|+(d-c)\cdot l  + 1\leq p\cdot d$.

Finally, suppose that $l\geq p$. Since $z_1=\cdots=z_{l+1}=c$, we get $|\z|\geq c\cdot(l+1)$, so
\[p\cdot d \geq |\z| + (d-c)\cdot l + 1 \geq c\cdot(l+1) + (d-c)\cdot l + 1 = d\cdot l + c + 1\geq d\cdot p + 1,\]
which is a contradiction. It follows that $l\leq p-1$, concluding the proof.
\end{proof}

The relationship between regularity and the optimization problem from Section~\ref{sec:optimization} is given by the following.

\begin{lemma}\label{lem:maxregJzl=Rlpnd}
 For each $l=0,\cdots,p-1$ we have using the notation (\ref{eq:def-Rlpnd}) the equality
 \begin{equation}\label{eq:maxregJzl=Rlpnd}
 \max \{\reg(J_{\z,l}):\z\in\P_n\rm{ and }(\z,l)\in \Z_p^d\} = R_{l,p,n,d}.
 \end{equation}
\end{lemma}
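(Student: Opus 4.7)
The plan is to construct an explicit bijection between the set of pairs $(\z,\t)$ with $(\z,l)\in\Z_p^d$ and $\t\in\T_l(\z)$, and the set $\YU(l,p,n,d)$, under which the quantity $|\z|+|\t|-l-f_l(\z,\t)$ appearing in Theorem~\ref{thm:regJzl} corresponds to $g_{l,p,n,d}(\y,\u)$. Once such a value-preserving bijection is in place, equation (\ref{eq:maxregJzl=Rlpnd}) follows at once by taking the maximum on both sides (with the empty-set cases matching trivially).

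Concretely, I would propose the correspondence $(\z,\t)\longleftrightarrow(\y,\u)$ given by
\[\y=(z_{l+1},z_{l+2},\ldots,z_n),\qquad \u=(t_{n-l},t_{n-l-1},\ldots,t_1),\]
with inverse $\z=(y_1^{l+1},y_2,y_3,\ldots,y_{n-l})$ and $\t=(u_{n-l},u_{n-l-1},\ldots,u_1,l)$. A key preliminary observation is that the standing hypothesis $z_l=z_{l+1}$ of Theorem~\ref{thm:regJzl} forces $t_{n-l+1}-t_{n-l}\leq z_l-z_{l+1}=0$ for any $\t\in\T_l(\z)$, so $t_{n-l}=l$ is automatic; under the bijection this becomes exactly the condition $u_1=l$ appearing in the definition of $\YU$.

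Next I would verify that the bijection matches constraint sets. Writing $c=y_1=z_{l+1}$, the identity $|\z|=ly_1+|\y|$ (immediate from $z_1=\cdots=z_{l+1}=c$) translates the two inequalities in Lemma~\ref{lem:Z_p^d} defining $\Z_p^d$ into the size conditions $|\y|\leq d(p-l)-1$ and $|\y|-y_1\geq d(p-1-l)$ of $\YU$; the bound $c\leq d-1$ from $\Z_p^d$ corresponds to $y_1\leq d-1$, which holds automatically on the $\YU$ side by Lemma~\ref{lem:y1<=d-1}. The chain of inequalities $0\leq t_{i+1}-t_i\leq z_{n-i}-z_{n+1-i}$ for $i=1,\ldots,n-l-1$ defining $\T_l(\z)$ then translates, under the index reversal $t_i=u_{n-l+1-i}$ and $z_{l+k}=y_k$, into $0\leq u_j-u_{j+1}\leq y_j-y_{j+1}$ for $j=1,\ldots,n-l-1$, exactly as required by $\YU(l,p,n,d)$.

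Finally I would match the functionals. Direct substitution gives $|\t|=|\u|+l$, and after noting that the boundary terms ($i=n-l$) vanish in $f_l(\z,\t)$ thanks to $z_l-z_{l+1}=0$ and $t_{n-l+1}-t_{n-l}=0$, a reindexing of both sums yields
\[f_l(\z,\t)=\sum_{k=1}^{n-l-1}u_{k+1}\bigl((y_k-y_{k+1})-(u_k-u_{k+1})\bigr).\]
Combining these identities with Theorem~\ref{thm:regJzl} produces
\[\reg(J_{\z,l})=|\z|+|\t|-l-f_l(\z,\t)=l\cdot y_1+|\y|+|\u|-\sum_{k=1}^{n-l-1}u_{k+1}\bigl((y_k-y_{k+1})-(u_k-u_{k+1})\bigr)=g_{l,p,n,d}(\y,\u),\]
as desired. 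The only real technical obstacle is the bookkeeping needed to reconcile the index conventions used in $\T_l(\z)$ (non-decreasing, $t_{n-l+1}=l$) with those in $\YU$ (non-increasing, $u_1=l$); once this reversal is handled, all verifications reduce to routine substitutions.
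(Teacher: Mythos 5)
Your proposal is correct and matches the paper's own argument essentially line for line: the same change of variables $y_i=z_{l+i}$, $u_i=t_{n-l+1-i}$, the same observation that $z_l=z_{l+1}$ forces $u_1=l$, the same translation of the size constraints from $\Z_p^d$ to the constraints of $\YU(l,p,n,d)$ via $|\z|=ly_1+|\y|$, and the same reindexed identity for $f_l(\z,\t)$ yielding $g_{l,p,n,d}(\y,\u)$. The only cosmetic difference is that you appeal to Lemma~\ref{lem:y1<=d-1} to dispose of the constraint $z_1\leq d-1$, whereas the paper notes that this inequality is already implied by the two size inequalities in (\ref{eq:def-Z_p^d}); both observations amount to the same thing.
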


\begin{proof} We recall the notation (\ref{eq:defTlz}), (\ref{eq:YU-lpnd}), (\ref{eq:def-glpnd}), as well as the equality (\ref{eq:regJzl}), and we make a change of variables as follows. If $\z\in\P_n$ is such that $z_1=\cdots=z_{l+1}$, we let $\y\in\P_{n-l}$ denote the partition defined by 
\begin{equation}\label{eq:yz}
y_i=z_{i+l}\rm{ for }i=1,\cdots,n-l.
\end{equation}
If $\t\in\T_l(\z)$ then we define $\u\in\P_{n-l}$ via 
\begin{equation}\label{eq:ut}
u_i=t_{n-l+1-i}\rm{ for }i=1,\cdots,n-l.
\end{equation}
We claim that the following equivalence holds
\begin{equation}\label{eq:equiv-zt-yu}
 (\z,l)\in\Z_p^d\rm{ and }\t\in\T_l(\z) \Longleftrightarrow (\y,\u)\in\YU(l,p,n,d)
\end{equation}
and that under the assumption that the equivalent conditions in (\ref{eq:equiv-zt-yu}) are satisfied, we have the equality
\begin{equation}\label{eq:g-from-zt}
g_{l,p,n,d}(\y,\u)=|\z| + |\t| - l - f_{l}(\z,\t).
\end{equation}
Once these are verified, the equality (\ref{eq:maxregJzl=Rlpnd}) follows since
\[\max \{\reg(J_{\z,l}):\z\in\P_n\rm{ and }(\z,l)\in \Z_p^d\} \overset{(\ref{eq:regJzl})}{=} \max\{|\z| + |\t| - l - f_{l}(\z,\t) : \z\in\P_n,\ (\z,l)\in\Z_p^d,\ \t\in\T_l(\z)\} = \]
\[\overset{(\ref{eq:equiv-zt-yu}-\ref{eq:g-from-zt})}{=} \max\{g_{l,p,n,d}(\y,\u):(\y,\u)\in\YU(l,p,n,d)\} \overset{(\ref{eq:def-Rlpnd})}{=} R_{l,p,n,d}.\]

By Remark~\ref{rem:z1--zl+1}, any $(\z,l)\in\Z_p^d$ satisfies $z_1=\cdots=z_{l+1}$. Using (\ref{eq:yz}), we have $|\z|-l\cdot z_1 = |\y|$, so the inequality $|\z|+(d-z_1)\cdot l + 1 \leq p\cdot d$ in (\ref{eq:def-Z_p^d}) is equivalent to $|\y|\leq d\cdot(p-l)-1$ in (\ref{eq:YU-lpnd}).  Moreover, since $y_1=z_{l+1}=z_1$, we get $|\z|-(l+1)\cdot z_1 = |\y|-y_1$, so the inequality $p\cdot d \leq |\z| + (d-z_1)\cdot(l+1)$ in (\ref{eq:def-Z_p^d}) is equivalent to $|\y|-y_1\geq d\cdot(p-1-l)$ in (\ref{eq:YU-lpnd}). The inequalities $t_{i+1}-t_i\leq z_{n-i}-z_{n-i+1}$ in (\ref{eq:defTlz}), for $i=1,\cdots,n-l-1$, are equivalent to $u_i-u_{i+1}\leq y_i-y_{i+1}$ in (\ref{eq:YU-lpnd}), for $i=1,\cdots,n-l-1$. Since $t_{n-l+1}=l$ and $z_l=z_{l+1}$, the inequality $t_{n-l+1}-t_{n-l}\leq z_l - z_{l+1}$ is equivalent to $u_1=t_{n-l}=l$. The condition that $z_1\leq d-1$ in (\ref{eq:def-Z_p^d}) is automatic, since it is implied by  $|\z|+(d-z_1)\cdot l  + 1\leq |\ul{z}|+(d-z_1)\cdot (l+1)$. This equivalences prove (\ref{eq:equiv-zt-yu}).

It remains to verify (\ref{eq:g-from-zt}) under the assumption that the conditions in (\ref{eq:equiv-zt-yu}) are satisfied. Since $z_l=z_{l+1}$ and $t_{n-l+1}=t_{n-l}=l$ as explained in the previous paragraph, we get
\[f_l(\z,\t) \overset{(\ref{eq:def-flzt})}{=} \sum_{i=1}^{n-l} t_i\cdot (z_{n-i} - z_{n+1-i} - t_{i+1} + t_i) = \sum_{i=1}^{n-l-1} t_i\cdot (z_{n-i} - z_{n+1-i} - t_{i+1} + t_i). \]
Using the change of variables (\ref{eq:yz}--\ref{eq:ut}), and exchanging $i\leftrightarrow n-l-i$, we can rewrite the above equality as
\[f_l(\z,\t) = \sum_{i=1}^{n-l-1} u_{i+1}\cdot((y_i-y_{i+1}) - (u_i-u_{i+1})).\]
To prove (\ref{eq:g-from-zt}) we then need to verify that
\begin{equation}\label{eq:zt-l=y+u}
|\z| + |\t| - l = l\cdot y_1 + |\y| + |\u|.
\end{equation}
Since $z_1=\cdots=z_l=y_1$, we get from (\ref{eq:yz}) that $|\z|=l\cdot y_1 + |\y|$. Since $t_{n-l+1}=l$, we get from (\ref{eq:ut}) that $|\t|-l=|\u|$. This yields (\ref{eq:zt-l=y+u}) and concludes our proof.
\end{proof}

\begin{corollary}\label{cor:reg-pows}
 For every $1\leq p\leq n$ and $d\geq 1$, we have
 \[\reg(S/I_p^d) = \max_{l=0,\cdots,p-1} R_{l,p,n,d},\ \reg(S/(I_p^d)^{sat}) = \max_{l=1,\cdots,p-1} R_{l,p,n,d},\rm{ and }\reg(S/I_p^{(d)}) = R_{p-1,p,n,d}.\]
 In particular, $\reg(I_p^d) \geq \reg((I_p^d)^{sat}) \geq \reg(I_p^{(d)})$.
\end{corollary}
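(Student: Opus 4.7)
The plan is simply to assemble the pieces already in place. Theorem~\ref{thm:Ext-split-IX} reduces the computation of $\reg(S/I_{\X})$ for a $\GL$-invariant ideal $I_{\X}$ to a maximum of $\reg(J_{\z,l})$ over the combinatorial set $\Z(\X)$, and Lemma~\ref{lem:maxregJzl=Rlpnd} repackages the inner maximum (over $\z$ with $(\z,l)\in\Z_p^d$) as the quantity $R_{l,p,n,d}$. What remains is to identify the correct $\Z$-sets for $I_p^d$, for $(I_p^d)^{sat}$, and for $I_p^{(d)}$, and then to interchange two maxima.

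First I would handle $I_p^d$ itself. Writing $I_p^d = I_{\X_p^d}$ one has $\Z(\X_p^d) = \Z_p^d$, and by Lemma~\ref{lem:Z_p^d} every pair $(\z,l)\in\Z_p^d$ satisfies $0\le l\le p-1$. Grouping by $l$ and applying~(\ref{eq:maxregJzl=Rlpnd}) gives
\[
\reg(S/I_p^d) \;=\; \max_{(\z,l)\in\Z_p^d}\reg(J_{\z,l}) \;=\; \max_{0\le l\le p-1}\;\max_{\z:(\z,l)\in\Z_p^d}\reg(J_{\z,l}) \;=\; \max_{0\le l\le p-1} R_{l,p,n,d},
\]
where the empty inner maximum is conventionally $-\infty$, matching the convention in~(\ref{eq:def-Rlpnd}).

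For the two saturations I would invoke Lemma~\ref{lem:saturation} together with equation~(\ref{eq:Z-saturation}). Since $(I_p^d)^{sat} = I_p^d : I_1^{\infty} = I_{(\X_p^d)^{:1}}$, (\ref{eq:Z-saturation}) identifies $\Z((\X_p^d)^{:1}) = \{(\z,l)\in\Z_p^d:l\ge 1\}$; similarly, $I_p^{(d)} = I_p^d : I_{p-1}^{\infty} = I_{(\X_p^d)^{:(p-1)}}$ gives $\Z((\X_p^d)^{:(p-1)}) = \{(\z,l)\in\Z_p^d:l\ge p-1\}$, and because $l\le p-1$ throughout $\Z_p^d$ this last set collapses to the single slice $\{l=p-1\}$. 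Applying Theorem~\ref{thm:Ext-split-IX} and Lemma~\ref{lem:maxregJzl=Rlpnd} exactly as above then yields the formulas for $\reg(S/(I_p^d)^{sat})$ and $\reg(S/I_p^{(d)})$.

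The final chain of inequalities $\reg(I_p^d)\ge \reg((I_p^d)^{sat})\ge \reg(I_p^{(d)})$ is then immediate, because the three maxima are taken over the nested chain $\{0\le l\le p-1\}\supseteq\{1\le l\le p-1\}\supseteq\{l=p-1\}$ of subsets of $\Z_p^d$, and the passage $\reg(S/I)\mapsto\reg(I)=\reg(S/I)+1$ preserves inequalities. There is no essential obstacle here: all of the genuine combinatorial work has already been absorbed into Lemmas~\ref{lem:Z_p^d} and~\ref{lem:maxregJzl=Rlpnd} and into the saturation formula~(\ref{eq:Z-saturation}), so this corollary is purely an assembly step.
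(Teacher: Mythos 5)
Your proposal is correct and follows essentially the same route as the paper: apply (\ref{eq:regIX}) from Theorem~\ref{thm:Ext-split-IX}, convert the inner maximum via Lemma~\ref{lem:maxregJzl=Rlpnd}, and identify the $\Z$-sets of the saturations through Lemma~\ref{lem:saturation} and (\ref{eq:Z-saturation}). The only differences are cosmetic (you make explicit the interchange of maxima and the $-\infty$ convention for empty slices, which the paper leaves implicit).
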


\begin{proof}
 We write $\X=\X_p^d$ so that $I_p^d = I_{\X}$. It follows from (\ref{eq:regIX}) that
 \[\reg(S/I_{\X}) = \max_{(\z,l)\in\Z(\X)}\reg(J_{\z,l}) \overset{\rm{Lemma}~\ref{lem:maxregJzl=Rlpnd}}{=} \max_{l=0,\cdots,p-1} R_{l,p,n,d}. \]
 
 We have using Lemma~\ref{lem:saturation} and (\ref{eq:p-saturation}) that $(I_p^d)^{sat} = I_p^d : I_1^{\infty} = I_{\X^{:1}}$ and $I_p^{(d)} = I_p^d : I_{p-1}^{\infty} = I_{\X^{:(p-1)}}$. Since $\Z(\X^{:1}) = \{(\z,l)\in\Z(\X):l\geq 1\}$ and $\Z(\X^{:(p-1)}) = \{(\z,l)\in\Z(\X):l\geq p-1\} =  \{(\z,l)\in\Z(\X):l = p-1\}$, the remaining equalities follow by the same reasoning from Lemma~\ref{lem:maxregJzl=Rlpnd}.
\end{proof}

\begin{lemma}\label{lem:reg-I1-In}
 If $p=1$ or $p=n$ then $\reg(I_p^d) = \reg((I_p^d)^{sat}) = \reg(I_p^{(d)}) = p\cdot d$ for all $d\geq 1$.
\end{lemma}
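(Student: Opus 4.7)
The plan is to reduce the statement to the optimization problem solved in Section~\ref{sec:optimization} via Corollary~\ref{cor:reg-pows}, and then handle the two cases $p=1$ and $p=n$ separately. By Lemma~\ref{lem:Zsymb-vs-Zpow}, when $p=1$ or $p=n$ the three ideals $I_p^d$, $(I_p^d)^{sat}$ and $I_p^{(d)}$ coincide, so it suffices to establish the single equality $\reg(I_p^d) = p\cdot d$, or equivalently $\reg(S/I_p^d) = p\cdot d - 1$.

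For $p=n$, Corollary~\ref{cor:reg-pows} gives $\reg(S/I_n^d) = \max_{0\leq l\leq n-1} R_{l,n,n,d}$. Lemma~\ref{lem:pl=nn-1} computes these quantities directly: $R_{n-1,n,n,d} = n\cdot d - 1$ and $R_{l,n,n,d} = -\infty$ for $l\leq n-2$. Hence $\reg(S/I_n^d) = n\cdot d - 1$, as desired.

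For $p=1$, Corollary~\ref{cor:reg-pows} gives $\reg(S/I_1^d) = R_{0,1,n,d}$, so I would compute $R_{0,1,n,d}$ directly from the definition in~(\ref{eq:def-Rlpnd}). The set $\YU(0,1,n,d)$ consists of pairs $(\y,\u)\in\P_n\times\P_n$ with $u_1=0$ (forcing $\u = \underline{0}$), $|\y|\leq d-1$, $|\y|-y_1\geq 0$, and the comparability condition $y_i-y_{i+1}\geq u_i-u_{i+1}=0$, which is automatic. Then $g_{0,1,n,d}(\y,\u) = |\y|$, which is maximized (taking $\y=(d-1)$, say) at $|\y|=d-1$. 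Thus $R_{0,1,n,d} = d-1$, giving $\reg(S/I_1^d) = d-1$ and $\reg(I_1^d) = d$.

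There is essentially no obstacle here: all the conceptual work has been done in the construction of the $\Ext$-split filtration, the regularity formula of Theorem~\ref{thm:regJzl}, and the case analysis of Section~\ref{sec:optimization}. The proof amounts to bookkeeping: verifying that the optimization problem degenerates in the expected way when $p=1$ (because $u_1=0$ forces $\u$ to vanish) and invoking the $p=n$ computation already carried out in Lemma~\ref{lem:pl=nn-1}.
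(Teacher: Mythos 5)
Your proof is correct and follows essentially the same route as the paper: reduce to one equality via Lemma~\ref{lem:Zsymb-vs-Zpow}, invoke Corollary~\ref{cor:reg-pows} and Lemma~\ref{lem:pl=nn-1} for $p=n$, and compute $R_{0,1,n,d}=d-1$ directly from the definitions for $p=1$. The only cosmetic difference is that the paper goes through $\reg(S/I_p^{(d)}) = R_{p-1,p,n,d}$ rather than the full $\max_{l}$ formula, which sidesteps the need to note that $R_{l,n,n,d}=-\infty$ for $l\leq n-2$; this is immaterial since the ideals coincide.
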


\begin{proof}
By Lemma~\ref{lem:Zsymb-vs-Zpow}, it suffices to check that $\reg(I_p^{(d)}) = p\cdot d$ when $p=1$ or $p=n$. We note that in the case when $p=n$ the result is a direct consequence of Corollary~\ref{cor:reg-pows} and Lemma~\ref{lem:pl=nn-1}: indeed, $\reg(I_{n}^{(d)}) = \reg(S/I_{n}^{(d)}) + 1 = R_{n-1,n,n,d} + 1 = n\cdot d$. By the same argument, the case $p=1$ follows once we verify that $R_{0,1,n,d} = d-1$ for all $d\geq 1$. This follows from the fact that
\[\YU(0,1,n,d) \overset{(\ref{eq:YU-lpnd})}{=} \{ (\y,\ul{0}): \y \in \P_n, |\y|\leq d-1\},\]
and from the equality $g_{0,1,n,d}(\y,\ul{0}) \overset{(\ref{eq:def-glpnd})}{=} |\y|$, which is maximized when $|\y|=d-1$.
\end{proof}

\begin{lemma}\label{lem:Rpnn2}
 If $1<p<n$ and $1\leq d\leq n-2$ then $R_{p-1,p,n,d} \geq p\cdot d$.
\end{lemma}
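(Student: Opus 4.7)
The plan is to prove the lower bound $R_{p-1,p,n,d} \geq p\cdot d$ by exhibiting an explicit admissible pair $(\y,\u) \in \YU(p-1,p,n,d)$ for which $g_{p-1,p,n,d}(\y,\u) \geq p\cdot d$. Since $R_{p-1,p,n,d}$ is defined as the maximum of $g_{p-1,p,n,d}$ over all admissible pairs, this immediately yields the claim.

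I will take $\y = (d-1, 0, \ldots, 0) \in \P_{n-p+1}$, concentrating the allowed mass $|\y| \leq d-1$ into a single entry, so that the first bound in the definition of $\YU(p-1,p,n,d)$ is saturated and the second ($|\y|-y_1 \geq 0$) is trivial. With this choice $y_i - y_{i+1} = 0$ for $i \geq 2$, so the chain conditions $y_i - y_{i+1} \geq u_i - u_{i+1}$ force $u_2 = u_3 = \cdots = u_{n-p+1} =: c$ for some integer $c \geq 0$; combined with the prescribed value $u_1 = p-1$ and the remaining chain inequality at $i=1$, the only degree of freedom in $\u$ is the choice of $c \in [\max(0, p-d),\, p-1]$.

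A direct calculation, in which the penalty sum in the definition of $g_{p-1,p,n,d}$ collapses to its $i=1$ term, gives
\[
g_{p-1,p,n,d}(\y,\u) = p(d-1) + (p-1) + c\cdot(n - d - c).
\]
The inequality $g \geq p\cdot d$ thus reduces to $c\cdot(n - d - c) \geq 1$. I will take $c = \max(1,\, p-d)$ and verify this by two short case checks: when $d \geq p-1$, one has $c = 1$ and the product equals $n - d - 1 \geq 1$ by the hypothesis $d \leq n-2$; when $d < p-1$, one has $c = p-d \geq 2$ and the product equals $(p-d)(n-p) \geq 1$ using $p < n$. In either case one checks $c \leq p-1$ (so that $\u$ is a genuine partition) and $c \geq p-d$ (so the $i=1$ chain inequality holds), closing the argument.

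The hypothesis $d \leq n-2$ is used exactly in the first case above, and the sharpness matches Proposition~\ref{prop:Rlpnd-dlarge}: for $d \geq n-1$ one has $R_{p-1,p,n,d} = p\cdot d - 1$, so the bound $\geq p\cdot d$ fails precisely at the boundary. There is no real obstacle in the proof; the only creative step is guessing the correct family of witnesses $(\y,\u)$, after which the verification is a routine algebraic check.
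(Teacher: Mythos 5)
Your proposal is correct and essentially identical to the paper's own proof: you exhibit the same witness $\y = (d-1,0,\ldots,0)$, $\u = (p-1,c,\ldots,c)$ with $c = \max(1,p-d)$, verify admissibility, and split on whether $c=1$ or $c=p-d\geq 2$; the only cosmetic difference is that you first collapse $g$ to the single formula $p(d-1)+(p-1)+c(n-d-c)$ and reduce everything to $c(n-d-c)\geq 1$, whereas the paper does two slightly longer direct computations.
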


\begin{proof}
 We define the partitions $\y,\u\in\P_{n-p+1}$ via $y_1=d-1$, $u_1=p-1$, $y_i=0$ and $u_i = \max(1,p-d)$ for $i=2,\cdots,n-p+1$. We have $y_1-y_2 = d-1\geq\min(p-2,d-1)=u_1-u_2$. Moreover $y_i-y_{i+1}=u_i-u_{i+1}=0$ for $i=2,\cdots,n-p$, and it follows from (\ref{eq:YU-lpnd}) that $(\y,\u)\in\YU(p-1,p,n,d)$. We have $(p-1)\cdot y_1 + |\y| = p\cdot(d-1)$, and one of the following:
 \begin{itemize}
  \item $p\leq d\leq n-2$, in which case $|\u|=(p-1)+(n-p)=n-1$ and
  \[\sum_{i=1}^{n-p} u_{i+1}\cdot((y_i-y_{i+1}) - (u_i - u_{i+1})) = (d-1) - ((p-1) - 1)=d-p+1.\]
 It follows that
 \[g_{p-1,p,n,d}(\y,\u) = p\cdot(d-1) + (n-1) + (d-p+1) = p\cdot d + (n-p-1) + (d-p+1) \geq p\cdot d.\]
  \item $1\leq d\leq p-1$, in which case $|\u|=(p-1)+(n-p)\cdot(p-d)\geq p$. Since $y_i-y_{i+1}=u_i-u_{i+1}$ for all $i=1,\cdots,n-p$, we conclude that $g_{p-1,p,n,d}(\y,\u)=(p-1)\cdot y_1 + |\y| + |\u|\geq p\cdot(d-1) + p = p\cdot d$.
 \end{itemize}
 Since $R_{p-1,p,n,d}\geq g_{p-1,p,n,d}(\y,\u)$, we see that in both cases $R_{p-1,p,n,d}\geq p\cdot d$, as desired.
\end{proof}

\begin{proof}[Proof of Theorem~\ref{thm:reg-pows}]
 The equality (\ref{eq:reg-powers-p=1n}) is proved in Lemma~\ref{lem:reg-I1-In}. The equalities in (\ref{eq:reg-large-powers}) follow by combining the fact that $\reg(I) = \reg(S/I) + 1$ with Corollary~\ref{cor:reg-pows} and Proposition~\ref{prop:Rlpnd-dlarge}. Finally, (\ref{eq:reg-small-powers}) follows in the same way from Corollary~\ref{cor:reg-pows} and Lemma~\ref{lem:Rpnn2}.
\end{proof}

We next give an explicit formula for $\Z(\X_p^{(d)})$, which shows that the modules $\Ext^{\bullet}_S(S/I_p^{(d)},S)$ grow with $d$.

\begin{lemma}\label{lem:Z_p^(d)}
If we let $\Z_p^{(d)} = \Z(\X_p^{(d)})$ then
\begin{equation}\label{eq:def-Z_p^(d)}
\Z_p^{(d)} = \{(\z,p-1): \z\in\P_n, z_1=\cdots=z_p, z_p + z_{p+1} + \cdots + z_n \leq d-1\}
\end{equation}
\end{lemma}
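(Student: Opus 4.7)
The plan is to verify both inclusions in Definition~\ref{def:ZX} directly, exploiting a key structural feature of partitions $\x \in \X_p^{(d)}$: since $x_1 = \cdots = x_p$ is forced, each conjugate entry $x'_i$ lies in $\{0\} \cup \{p, p+1, \ldots, n\}$. In particular, for $c = z_1$, the condition $x'_{c+1} \leq l + 1$ in Definition~\ref{def:ZX}(1) splits into two cases: either $x_1 \leq c$ (and $x'_{c+1} = 0$) or $x_1 \geq c + 1$ (and $x'_{c+1} \geq p$). This dichotomy will drive both directions.

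For the inclusion ``$\supseteq$'', given $\z$ with $z_1 = \cdots = z_p = c$ and $z_p + \cdots + z_n \leq d - 1$, I will set $r = d - (z_p + \cdots + z_n) \geq 1$ and consider the partition $\x$ with $x_1 = \cdots = x_p = c + r$ and $x_i = z_i$ for $i > p$. A short check shows $\x \in \X_p^{(d)}$, $\x(c) = \z$, and $x'_{c+1} = p$, giving condition~(1) with $l = p-1$. For condition~(2), any candidate $\x$ has $x'_{c+1} \in \{0\} \cup \{p, p+1, \ldots\}$ by the dichotomy; if $x_1 \leq c$ then $\x = \x(c) \leq \z$ componentwise, so $x_p \geq d - (z_{p+1} + \cdots + z_n) \geq c + 1$ using the hypothesis on $z_p + \cdots + z_n$, contradicting $x_p = x_1 \leq c$. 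Thus every valid $\x$ has $x'_{c+1} = p = l + 1$.

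For the inclusion ``$\subseteq$'', let $(\z, l) \in \Z_p^{(d)}$ and $c = z_1$. The dichotomy together with condition~(2) rules out any valid $\x$ with $x'_{c+1} = 0$ (otherwise $l + 1 = 0$, violating $l \geq 0$), so every valid $\x$ has $x'_{c+1} \geq p$; this forces $l + 1 \geq p$, and combined with Remark~\ref{rem:z1--zl+1} yields $z_1 = \cdots = z_p$. Next I will establish $z_p + \cdots + z_n \leq d - 1$ by contradiction: if the sum were $\geq d$, I would construct $\x \in \X_p^{(d)}$ with $x_1 \leq c$ and $\x \leq \z$, splitting into the subcases $c \geq d$ (using $\x = (d^p, 0^{n-p})$) and $c < d$ (setting $x_i = c$ for $i \leq p$ and choosing $x_i \leq z_i$ for $i > p$ with trailing sum exactly $d - c$); this $\x$ satisfies condition~(1) but has $x'_{c+1} = 0$, contradicting~(2). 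Finally, the partition exhibited in the second paragraph furnishes a valid $\x$ with $x'_{c+1} = p$, forcing $l + 1 \leq p$ and hence $l = p - 1$.

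I expect the delicate point to be the explicit construction of $\x$ in the ``$\subseteq$'' direction: one must ensure the chosen entries assemble into a genuine partition (weakly decreasing) and remain bounded by $\z$ componentwise, which is why the subcases $c \geq d$ and $c < d$ must be handled separately. Everything else reduces to straightforward bookkeeping once the dichotomy on $x'_{c+1}$ is exploited.
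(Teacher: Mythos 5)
Your proof is correct, but it follows a genuinely different route from the paper's. The paper exploits the chain of equalities $I_{\X_p^{(d)}} = I_{\X_p^d} : I_{p-1}^\infty = I_{(\X_p^d)^{:(p-1)}}$, so that Remark~\ref{rem:ZdependsonI} and the already-proved saturation statement~(\ref{eq:Z-saturation}) immediately give $\Z_p^{(d)} = \{(\z,l)\in\Z_p^d : l = p-1\}$; the description of $\Z_p^{(d)}$ then falls out by substituting $l=p-1$ into the inequalities of Lemma~\ref{lem:Z_p^d} and simplifying. You instead verify both conditions of Definition~\ref{def:ZX} directly against the explicit set $\X_p^{(d)}$, using the dichotomy that any $\x \in \X_p^{(d)}$ has $x'_{c+1} \in \{0\}\cup\{p,\ldots,n\}$ (because $x_1=\cdots=x_p$ is forced). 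Your argument is self-contained and does not lean on Lemma~\ref{lem:Z_p^d} or equation~(\ref{eq:Z-saturation}), at the cost of being longer and requiring the somewhat delicate hand-construction of witness partitions in the ``$\subseteq$'' direction. The paper's route is a clean three-line reduction once the earlier machinery is in place. Both are valid; the one small place worth writing out carefully in a final version is your construction for the subcase $c<d$, where you must confirm that $d-c \le z_{p+1}+\cdots+z_n$ (which does follow from the contradiction hypothesis $z_p+\cdots+z_n\ge d$ since $z_p=c$) so that a partition of the required trailing sum contained in $(z_{p+1},\ldots,z_n)$ actually exists.
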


\begin{proof}
 Since $I_{\X_p^{(d)}} = I_{\X_p^{d}}:I_{p-1}^{\infty} = I_{(\X_p^{d})^{:(p-1)}}$ it follows from Remark~\ref{rem:ZdependsonI} that $\Z(\X_p^{(d)}) = \Z((\X_p^{d})^{:(p-1)})$, and therefore using (\ref{eq:Z-saturation}) we obtain
 \[\Z(\X_p^{(d)}) = \{(\z,l)\in\Z(\X_p^d) : l\geq p-1\} = \{(\z,l)\in\Z(\X_p^d) : l = p-1\}.\]
 From (\ref{eq:def-Z_p^d}) it follows that $(\z,p-1)\in\Z(\X_p^d)$ if and only if
 \begin{equation}\label{eq:zp-1-in-Zpd}
 z_1=\cdots=z_p \leq d-1,\ |\z| + (d-z_1)\cdot(p-1) + 1 \leq p\cdot d \leq |\z| + (d-z_1)\cdot p.
 \end{equation}
 If $z_1=\cdots=z_p$, then the inequality $p\cdot d \leq |\z| + (d-z_1)\cdot p$ is trivially satisfied since it is equivalent to 
 \[|\z|-p\cdot z_1 = z_{p+1}+\cdots+z_n\geq 0.\]
 Moreover, the inequality $|\z| + (d-z_1)\cdot(p-1) + 1 \leq p\cdot d$ is equivalent to $z_p+\cdots+z_n=|\z|-(p-1)\cdot z_1\leq d-1$, so (\ref{eq:zp-1-in-Zpd}) is equivalent to
 \[ z_1=\cdots=z_p \leq d-1,\rm{ and }z_p+z_{p+1}+\cdots+z_n\leq d-1.\]
 Since the inequality $z_p\leq d-1$ is implied by $z_p+z_{p+1}+\cdots+z_n\leq d-1$, (\ref{eq:def-Z_p^(d)}) follows.
\end{proof}

Since $\Z_p^{(d)} \subset \Z_p^{(d+1)}$ for all $d$, an immediate application of (\ref{eq:kerExt}) in Theorem~\ref{thm:Ext-split-IX} yields
\begin{corollary}\label{cor:inj-Ext-symb}
 For $1\leq p\leq n$, and $d,j\geq 0$, the surjection $S/I_p^{(d+1)}\onto S/I_p^{(d)}$ induces an inclusion
 \[\Ext^j_S(S/I_p^{(d)},S) \lhra \Ext^j_S(S/I_p^{(d+1)},S).\]
 In particular, for all $j\geq 0$ the induced maps $\Ext^j_S(S/I_p^{(d)},S)\lra H_{I_p}^j(S)$ are injective.
\end{corollary}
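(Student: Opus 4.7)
The plan is to reduce both claims to the explicit description of $\Z_p^{(d)}$ given in Lemma~\ref{lem:Z_p^(d)} together with the kernel formula~(\ref{eq:kerExt}) from Theorem~\ref{thm:Ext-split-IX}. The only real content is verifying the monotonicity $\Z_p^{(d)}\subseteq\Z_p^{(d+1)}$: if $(\z,p-1)\in\Z_p^{(d)}$, then by Lemma~\ref{lem:Z_p^(d)} we have $z_1=\cdots=z_p$ and $z_p+z_{p+1}+\cdots+z_n\leq d-1$, which in particular gives $z_p+z_{p+1}+\cdots+z_n\leq d=(d+1)-1$, so $(\z,p-1)\in\Z_p^{(d+1)}$.

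With this in hand, I would invoke Theorem~\ref{thm:Ext-split-IX} applied to the inclusion $I_{\X}\subseteq I_{\Y}$ with $\X=\X_p^{(d+1)}$ and $\Y=\X_p^{(d)}$ (so $I_{\X}=I_p^{(d+1)}\subseteq I_p^{(d)}=I_{\Y}$). The corresponding surjection $S/I_p^{(d+1)}\onto S/I_p^{(d)}$ induces for each $j$ the map $\Ext^j_S(S/I_p^{(d)},S)\to\Ext^j_S(S/I_p^{(d+1)},S)$, whose kernel by~(\ref{eq:kerExt}) equals
\[
\bigoplus_{(\z,l)\in\Z_p^{(d)}\setminus\Z_p^{(d+1)}}\Ext^j_S(J_{\z,l},S).
\]
Since $\Z_p^{(d)}\setminus\Z_p^{(d+1)}=\emptyset$ by the previous paragraph, this kernel is zero and the map is injective.

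For the ``in particular'' statement, I would use that the symbolic powers form a cofinal descending chain with the ordinary powers for the $I_p$-adic topology (both $\{I_p^d\}$ and $\{I_p^{(d)}\}$ have radical $I_p$, and $I_p^d\subseteq I_p^{(d)}\subseteq I_p$), so
\[
H^j_{I_p}(S)=\varinjlim_{d}\Ext^j_S(S/I_p^{(d)},S).
\]
Because all transition maps in this directed system are injective by the first part, each $\Ext^j_S(S/I_p^{(d)},S)$ embeds into the colimit, yielding the injectivity of $\Ext^j_S(S/I_p^{(d)},S)\to H^j_{I_p}(S)$. The only possible obstacle is the verification of $\Z_p^{(d)}\subseteq\Z_p^{(d+1)}$, but this is immediate from the closed-form description of $\Z_p^{(d)}$, so in fact the entire proof is essentially a one-line deduction from the machinery already established.
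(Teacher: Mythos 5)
Your proposal is correct and follows essentially the same route as the paper: the inclusion $\Z_p^{(d)}\subseteq\Z_p^{(d+1)}$, read off from Lemma~\ref{lem:Z_p^(d)}, combined with the kernel formula (\ref{eq:kerExt}) applied to $I_p^{(d+1)}\subseteq I_p^{(d)}$, is exactly the paper's one-line deduction, and the passage to $H^j_{I_p}(S)$ via the direct limit over symbolic powers is the same standard step the paper invokes. The only minor imprecision is that cofinality of $\{I_p^{(d)}\}$ with $\{I_p^d\}$ requires slightly more than equality of radicals (one needs each $I_p^d$ to contain some $I_p^{(N)}$, which follows e.g.\ from the description (\ref{eq:def-X_p^(d)}) giving $I_p^{(N)}\subseteq I_p^{\lceil N/(n-p+1)\rceil}$), but this is routine and does not affect the argument.
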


We can now give an answer to \cite[Question~6.2]{EMS} in the case of determinantal thickenings.

\begin{corollary}\label{cor:unmixed}
 If $I\subseteq S$ is a proper $\GL$-invariant ideal which is unmixed, then for each $j\geq 0$ the induced map
 \[\Ext^j_S(S/I,S) \lra H^j_I(S)\]
 is injective.
\end{corollary}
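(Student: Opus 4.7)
The plan is to exploit Theorem~\ref{thm:Ext-split-IX} to reduce the injectivity to a combinatorial containment $\Z(\mc{X}) \subseteq \Z(\X_p^{(d)})$ for $d$ large, and then invoke Corollary~\ref{cor:inj-Ext-symb}.

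First I would observe that since $\GL$ is connected and $I$ is $\GL$-invariant, every associated prime of $S/I$ is $\GL$-stable, hence of the form $I_q$. The ideals $I_q$ have pairwise distinct heights, so unmixedness forces $\operatorname{Ass}(S/I) = \{I_p\}$ for a unique $p$; write $I = I_{\mc{X}}$.

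The heart of the argument is to show that every $(\z, l) \in \Z(\mc{X})$ satisfies $l = p - 1$. The inequality $l \leq p - 1$ follows from the filtration of Corollary~\ref{cor:filtration-S/IX}: since $J_{\z,l}$ is a subquotient of $S/I$, Corollary~\ref{cor:Ann-Jzl} gives $V(I_{l+1}) = \operatorname{Supp}(J_{\z,l}) \subseteq \operatorname{Supp}(S/I) = V(I_p)$, which forces $I_{l+1} \supseteq I_p$ and hence $l \leq p-1$. For the reverse inequality $l \geq p - 1$, the key input is the saturation identity $I : I_{p-1}^\infty = I$: since $I$ is $I_p$-primary and $I_{p-1}^d \not\subseteq \sqrt{I} = I_p$ for any $d$ (because $I_p$ is prime and $I_{p-1} \not\subseteq I_p$), any $f$ with $f \cdot I_{p-1}^d \subseteq I$ must itself lie in $I$ by the definition of primary. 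Combining this identity with Lemma~\ref{lem:saturation} and (\ref{eq:Z-saturation}) yields $l \geq p - 1$.

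Once $l = p - 1$ is pinned down for all members of $\Z(\mc{X})$, the partitions $\z$ arising are uniformly bounded: Remark~\ref{rem:z1--zl+1} gives $z_1 = \cdots = z_p$, Definition~\ref{def:ZX} bounds $z_1$ by $\max_{\x \in \mc{X}} x_1$, and the remaining parts satisfy $z_i \leq z_1$, so $z_p + z_{p+1} + \cdots + z_n$ is bounded uniformly over $\Z(\mc{X})$. Lemma~\ref{lem:Z_p^(d)} then yields $\Z(\mc{X}) \subseteq \Z(\X_p^{(d)})$ for all sufficiently large $d$. Separately, localizing at $I_p$ shows that $I S_{I_p}$ is primary to the maximal ideal of $S_{I_p}$, hence contains $(I_p S_{I_p})^d$ for some $d$, and contracting back gives $I_p^{(d)} \subseteq I$. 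The resulting surjection $S/I_p^{(d)} \twoheadrightarrow S/I$ then induces, via (\ref{eq:kerExt}), a map $\Ext^j_S(S/I, S) \to \Ext^j_S(S/I_p^{(d)}, S)$ whose kernel vanishes; composing with the injection of Corollary~\ref{cor:inj-Ext-symb} and using $H^j_I(S) = H^j_{I_p}(S)$ yields the result. The main obstacle is the equality $l = p - 1$: the support containment is essentially formal, but the reverse inequality requires the saturation identity, which is the only genuinely nontrivial commutative-algebra input.
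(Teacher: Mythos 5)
Your proposal is correct and follows essentially the same route as the paper: pin down $l=p-1$ for all $(\z,l)\in\Z(\X)$ using Corollary~\ref{cor:Ann-Jzl} for the upper bound and the saturation identity $I=I:I_{p-1}^{\infty}$ with (\ref{eq:Z-saturation}) for the lower bound, deduce $\Z(\X)\subseteq\Z_p^{(d)}$ and $I_p^{(d)}\subseteq I$ for $d\gg 0$, and conclude via (\ref{eq:kerExt}) and Corollary~\ref{cor:inj-Ext-symb}. The only differences are cosmetic (you localize at $I_p$ to get $I_p^{(d)}\subseteq I$ where the paper saturates $I_p^d\subseteq I$, and you bound $\Z(\X)$ explicitly where the paper uses finiteness of $\Z(\X)$ together with $\mc{A}=\bigcup_d\Z_p^{(d)}$).
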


\begin{proof} Since $I$ is $\GL$-invariant, its radical is $\sqrt{I}=I_p$ for some $1\leq p\leq n$. The condition of $I$ being unmixed is then equivalent to $I=I:I_{p-1}^{\infty}$. If we write $I=I_{\X}$ then it follows from (\ref{eq:Z-saturation}) that if $(\z,l)\in\Z(\X)$ then $l\geq p-1$. Moreover, it follows from Corollary~\ref{cor:Ann-Jzl} that for every $(\z,l)\in\Z(\X)$ we must have $l\leq p-1$, since $J_{\z,l}$ is a subquotient of $S/I_{\X}$, and $S/I_{\X}$ is set-theoretically supported on rank $<p$ matrices. We conclude using Remark~\ref{rem:z1--zl+1} that $\Z(\X)$ is a finite subset of
\[\mc{A}=\{(\z,p-1):\z\in\P_n,\ z_1=\cdots=z_p\}.\]
It follows from (\ref{eq:def-Z_p^(d)}) that $\mc{A}=\cup_{d\geq 1}\Z_p^{(d)}$, so for $d\gg 0$ we must have $\Z(\X)\subseteq\Z_p^{(d)}$.

Since $\sqrt{I}=I_p$, there exists $d\gg 0$ such that $I\supseteq I_p^d$, and therefore $I = I:I_{p-1}^{\infty} \supseteq I_p^d : I_{p-1}^{\infty} = I_p^{(d)}$. If $d$ is chosen so that $I\supseteq I_p^{(d)}$ and $\Z(\X)\subseteq\Z_p^{(d)}$, then we have by (\ref{eq:kerExt}) that for each $j\geq 0$ the induced maps
\[\Ext^j_S(S/I,S) \lra \Ext^j_S(S/I_p^{(d)},S)\]
are injective. Since $\sqrt{I}=I_p$, we have $H^j_I(S) = H^j_{I_p}(S)$, and the conclusion follows from Corollary~\ref{cor:inj-Ext-symb}.
\end{proof}



We end this section with a verification of (\ref{eq:reg-I2}), for which we need to analyze the regularity of small powers of $2\times 2$ minors.

\begin{theorem}\label{thm:2x2}
 Assume that $m\geq n\geq 3$ and let $I_2$ denote the ideal of $2\times 2$ minors of the generic $m\times n$ matrix. We have
 \[\reg(I_2^d) = \reg(I_2^{(d)}) = d + n - 1\rm{ for }1\leq d\leq n-2.\]
\end{theorem}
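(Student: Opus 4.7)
The plan is to apply Corollary~\ref{cor:reg-pows} with $p=2$, which combined with $\reg(I) = \reg(S/I) + 1$ reduces the theorem to proving
\[R_{1,2,n,d} = d+n-2\quad\rm{and}\quad R_{0,2,n,d} \leq d+n-2\quad\rm{for }1\leq d\leq n-2.\]
The bound on $R_{0,2,n,d}$ is straightforward: the constraint $u_1=0$ in~(\ref{eq:YU-lpnd}) forces $\u=\ul{0}$, whence $g_{0,2,n,d}(\y,\ul{0})=|\y|\leq 2d-1$, and $2d-1\leq d+n-2$ whenever $d\leq n-1$.

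For the lower bound $R_{1,2,n,d}\geq d+n-2$, I would take $\z=(d-1,d-1)\in\P_n$ with $l=1$. Lemma~\ref{lem:Z_p^(d)} gives $(\z,1)\in\Z_2^{(d)}$ since $z_1=z_2=d-1$ and $z_2+\cdots+z_n=d-1$, Example~\ref{ex:regJz1} evaluates $\reg(J_{\z,1})=(d-1)+n-1=d+n-2$, and Lemma~\ref{lem:maxregJzl=Rlpnd} converts this into the desired lower bound for $R_{1,2,n,d}$.

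The matching upper bound $R_{1,2,n,d}\leq d+n-2$ requires the most care. Because $u_1=1$ and $\u$ is a partition, one has $\u=(1^k,0^{n-1-k})$ for some $1\leq k\leq n-1$. Substituting into~(\ref{eq:def-glpnd}), only the terms $i=1,\ldots,k-1$ of the sum contribute (the factor $u_i-u_{i+1}$ vanishes for $i<k$ and $u_{i+1}$ vanishes for $i\geq k$), each equal to $y_i-y_{i+1}$, and the sum telescopes to $y_1-y_k$; this yields
\[g_{1,2,n,d}(\y,\u) = |\y| + k + y_k.\]
When $k=n-1$, no constraint in~(\ref{eq:YU-lpnd}) relates $y_{n-1}$ to $\u$, and the inequality $|\y|\leq d-1<n-1$ together with $(n-1)\cdot y_{n-1}\leq|\y|$ forces $y_{n-1}=0$, so $g\leq (d-1)+(n-1)=d+n-2$. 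When $1\leq k\leq n-2$, the constraint $y_k-y_{k+1}\geq u_k-u_{k+1}=1$ forces $y_k\geq 1$ and $k\cdot y_k\leq|\y|\leq d-1$, hence $g\leq (d-1)+k+(d-1)/k$. The function $k\mapsto k+(d-1)/k$ is convex on $[1,n-2]$, with value $d$ at $k=1$ and $(n-2)+(d-1)/(n-2)\leq n-1$ at $k=n-2$, so it stays below $n-1$ throughout the interval for $d\leq n-1$, closing the bound.

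The main obstacle is the telescoping collapse of $g_{1,2,n,d}$ and the subsequent endpoint optimization in the case $1\leq k\leq n-2$; everything else reduces to bookkeeping via the machinery of Section~\ref{sec:optimization} together with the explicit description of $\Z_2^{(d)}$ in Lemma~\ref{lem:Z_p^(d)} and the regularity computation in Example~\ref{ex:regJz1}.
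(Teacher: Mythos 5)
Your proposal is correct and follows essentially the same route as the paper's proof: reduce via Corollary~\ref{cor:reg-pows} to bounding $R_{0,2,n,d}$ and $R_{1,2,n,d}$, exploit the telescoping collapse $g_{1,2,n,d}(\y,\u)=|\y|+k+y_k$, exhibit a lower-bound witness, and show $k+y_k\leq n-1$ for the upper bound. The only differences are cosmetic: the paper produces the witness $(\y,\u)=((1^{d-1}),(1^{n-1}))$ directly rather than importing Example~\ref{ex:regJz1} through Lemma~\ref{lem:maxregJzl=Rlpnd}, and it bounds $y_r\leq n-r-1$ uniformly from the single inequality $r(n-r)\geq n-1$ (contradiction if $y_r\geq n-r$) rather than splitting into $k=n-1$ versus $1\leq k\leq n-2$ with a convexity estimate.
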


\begin{proof} By Corollary~\ref{cor:reg-pows} we have
\[\reg(I_2^{(d)}) = 1 + R_{1,2,n,d}\rm{ and }\reg(I_2^d) = 1 + \max(R_{0,2,n,d},R_{1,2,n,d}).\]
In order to compute $R_{0,2,n,d}$ and $R_{1,2,n,d}$, we recall the notation (\ref{eq:YU-lpnd}--\ref{eq:def-Rlpnd}). We have
\[\YU(0,2,n,d) = \{(\y,\ul{0}) : \y\in\P_n, |\y| \leq 2d - 1, |\y|-y_1\geq d\},\]
and $g_{0,2,n,d}(\y,\ul{0}) = |\y| \leq 2d-1$. When $d\geq 2$, the value of $g_{0,2,n,d}(\y,\ul{0})$ is then maximized for $\y=(d-1,d-1,1,0,\cdots)$ and we obtain $R_{0,2,n,d}=2d-1$, while for $d=1$ we get $\YU(0,2,n,1)=\emptyset$ and $R_{0,2,n,1}=-\infty$. It follows that $R_{0,2,n,d} < d + n - 2$, for $1\leq d\leq n-2$. To prove the theorem, it is then enough to verify that $R_{1,2,n,d}=d+n-2$ for $1\leq d\leq n-2$. We have
\[\YU(1,2,n,d) = \{(\y,\u)\in\P_{n-1}\times\P_{n-1}: |\y|\leq d-1,u_1=1,y_i-y_{i+1} \geq u_i-u_{i+1}\rm{ for }1\leq i\leq n-2\}.\]
Consider $(\y,\u)\in\YU(1,2,n,d)$ and let $1\leq r\leq n-1$ such that $u_1=\cdots=u_r = 1$, $u_{r+1}=0$. We get
\[g_{1,2,n,d}(\y,\u) = y_1 + |\y| + r - (y_1-y_r - u_1 + u_r) = |\y| + r + y_r.\]
Taking $(\y,\u)=((1^{d-1}),(1^{n-1}))\in\YU(1,2,n,d)$, we get $r=n-1>d-1$ and thus $y_r=0$. We obtain 
\[R_{1,2,n,d}\geq g_{1,2,n,d}(1^{d-1},1^{n-1}) = d-1 + n - 1 = d+n-2.\]
For the reverse inequality, we have to verify that $|\y|+r+y_r \leq d+n-2$ when $(\y,(1^r))\in\YU(1,2,n,d)$. Since $|\y|\leq d-1$, it is enough to check that $r+y_r\leq n-1$. Supposing instead that $y_r\geq n-r$, we obtain 
\[d-1\geq |\y| \geq r\cdot y_r\geq r\cdot(n-r)\geq r+(n-r) - 1 = n-1,\]
which contradicts the inequality $d\leq n-2$ and concludes our proof.
\end{proof}

\section{Kodaira Vanishing for Thickenings}\label{sec:Kodaira}

In \cite[Section~3]{BBLSZ}, the authors prove a version of the Kodaira vanishing theorem for the thickenings of local complete intersections which are defined by a power of the ideal sheaf (and show that the statement is false for more general thickenings). In this section we prove that their vanishing result holds for arbitrary $\GL$-equivariant thickenings of determinantal varieties.

\begin{theorem}\label{thm:Kodaira}
If $Y\subset\bb{P}^{m\cdot n-1}$ is a $\GL$-equivariant determinantal thickening then
 \[H^k(Y,\mc{O}_{Y}(-j))=0\rm{ for }k<m+n-2\rm{ and }j>0.\]
In particular, if we let $Y_{\operatorname{red}}$ denote the underlying determinantal variety, and if we make the convention that $\codim(\Sing(Y_{\operatorname{red}}))=\dim(Y_{\operatorname{red}})$ when $Y_{\operatorname{red}}$ is non-singular, then
 \[H^k(Y,\mc{O}_{Y}(-j))=0\rm{ for }k<\codim(\Sing(Y_{\operatorname{red}}))\rm{ and }j>0.\]
\end{theorem}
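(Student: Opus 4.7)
The plan is to translate the desired sheaf cohomology vanishing into a statement about Ext modules via graded local duality, and then to verify the latter using the explicit combinatorial description of $\Ext^{\bullet}_S(J_{\z,l},S)$ supplied by Theorem~\ref{thm:ExtJzl}. Writing $I=I_{\X}$ for the ideal defining $Y$, graded local duality for the $mn$-variable polynomial ring $S$ yields for $k\geq 1$ the isomorphism
\[H^k(Y,\mc{O}_Y(-j))\simeq\Ext^{mn-1-k}_S(S/I,S)_{j-mn}^{\vee},\]
and an analogous reduction via the standard four-term sequence relating $S/I$ to its saturation handles $k=0$ by the vanishing of $\Ext^{mn-1}_S(S/I,S)_{j-mn}$. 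Hence it suffices to show that $\Ext^{k'}_S(S/I,S)_{j'}=0$ whenever $k'\geq mn-m-n+2$ and $j'\geq 1-mn$; by Theorem~\ref{thm:Ext-split-IX}, this reduces further to the same vanishing for each summand $\Ext^{k'}_S(J_{\z,l},S)$ with $(\z,l)\in\Z(\X)$.

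Now fix $(\z,l)\in\Z(\X)$ and apply Theorem~\ref{thm:ExtJzl}. An irreducible constituent $S_{\ll(s)}\bb{C}^m\oo S_{\ll}\bb{C}^n$ of $\Ext^{k'}_S(J_{\z,l},S)$ is indexed by parameters $0\leq s\leq t_1\leq\cdots\leq t_{n-l}\leq l$ with $k'=mn-l^2-s(m-n)-2\sum t_i$, together with a dominant weight $\ll\in W(\z,l;\t,s)$, and it appears in internal degree $|\ll|$. Since $(\z,l)\in\Z(\X)$ forces $z_l=z_{l+1}$ by Remark~\ref{rem:z1--zl+1}, formula~(\ref{eq:lln=zl-m}) gives $\ll_n=l-z_l-m$; comparing this with the fixed entry $\ll_{t_{n-l}+n-l}=t_{n-l}-z_{l+1}-m$ and using dominance of $\ll$ then forces $t_{n-l}=l$.

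The technical heart of the argument is to show that the cohomological hypothesis $k'\geq mn-m-n+2$, equivalent to
\[l^2+s(m-n)+2\sum_{i=1}^{n-l}t_i\leq m+n-2,\]
forces $s=0$. Since $t_{n-l}=l$ and $s\leq t_i\leq l$, the minimum value of $\sum t_i$ is $(n-l-1)s+l$; substituting and simplifying produces $(s-1)(m+n-2l-2)\leq -l^2$. Under the constraints $0\leq s\leq l\leq n-1$ and $m\geq n\geq 2$, every $s\geq 1$ leads to a contradiction: for $s=1$ the inequality collapses to $l^2\leq 0$, hence $l=0$ and $s\leq l=0$; for $s\geq 2$, rewriting as $l^2-2(s-1)l+(s-1)(m+n-2)\leq 0$ shows that a real root in $l$ requires $s\geq m+n-1$, which combined with $s\leq n-1$ yields $m\leq 0$. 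I expect this arithmetic step to be the main obstacle, though it is elementary once set up.

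Once $s=0$ is established, the constraint $\ll_{s+1}=\ll_1\leq s-m=-m$ combined with dominance of $\ll$ gives $\ll_i\leq -m$ for every $i$, and therefore $|\ll|\leq -mn<1-mn$. Thus no constituent of $\Ext^{k'}_S(J_{\z,l},S)$ appears in any degree $j'\geq 1-mn$, completing the Ext vanishing and hence the main assertion. The ``in particular'' statement is then immediate: $\codim(\Sing(Y_{\operatorname{red}}))$ equals $\dim(Y_{\operatorname{red}})=m+n-2$ in the smooth case $p=2$ and is strictly smaller for $p\geq 3$.
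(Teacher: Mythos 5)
Your argument is correct and follows the same overall strategy as the paper: graded local duality converts the vanishing of $H^k(Y,\mc{O}_Y(-j))$ for $j>0$ into the vanishing of $\Ext^{mn-1-k}_S(S/I,S)$ in degrees above $-mn$, the $\Ext$-split filtration of Theorem~\ref{thm:Ext-split-IX} reduces this to the individual subquotients $J_{\z,l}$ with $(\z,l)\in\Z(\X)$, and the combinatorics of Theorem~\ref{thm:ExtJzl} then delivers the vanishing. The one place where you diverge from the paper is the arithmetic at the end, and there you over-engineer. You prove $s=0$ from $k'\geq mn-m-n+2$ by first pulling in the sharper lower bound $\sum t_i\geq (n-l-1)s+l$ (which in turn requires invoking $t_{n-l}=l$ via~(\ref{eq:lln=zl-m})) and then doing a discriminant analysis on a quadratic in $l$. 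The paper argues the contrapositive and needs only the crude bound: if the internal degree exceeds $-mn$ then $s\geq 1$, hence every $t_i\geq 1$, hence
\[l^2+s(m-n)+2\sum_{i=1}^{n-l}t_i\;\geq\; l^2+(m-n)+2(n-l)\;=\;(l-1)^2+(m+n-1)\;\geq\; m+n-1,\]
immediately giving $k\geq m+n-2$. In your own framing, the simpler inequality $t_i\geq s$ already closes the argument without $t_{n-l}=l$: for $s\geq 1$ it yields $l^2+(m+n-2l)\leq m+n-2$, i.e.\ $(l-1)^2+1\leq 0$, a contradiction, so no discriminant analysis is necessary. The remaining pieces (the handling of $k=0$ through the four-term local-cohomology sequence, and the computation $\codim(\Sing(Y_{\operatorname{red}}))=m+n-2p+3$ for $p\geq 3$ in the ``in particular'' clause) match the paper.
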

 
\begin{proof}
 We write $\mf{m}$ for the maximal homogeneous ideal in the polynomial ring $S$, and $H^{\bullet}_{\mf{m}}$ for the corresponding local cohomology groups. If $I\subseteq S$ denotes the homogeneous ideal of $Y$, then we have a degree preserving exact sequence
 \[S/I \lra \bigoplus_{j\in\bb{Z}} H^0(Y,\mc{O}_{Y}(-j)) \lra H^1_{\mf{m}}(S/I_p^d)\]
 and isomorphisms $H^k(Y,\mc{O}_{Y}(-j)) = H^{k+1}_{\mf{m}}(S/I)_{-j}$ for all $k>0$ and $j\in\bb{Z}$. Since $(S/I)_{-j}=0$ for $j>0$, and since by graded local duality \cite[Thm.~3.6.19]{bruns-herzog} we have isomorphisms of finite dimensional vector spaces
 \[H^{k+1}_{\mf{m}}(S/I)_{-j} \simeq \Ext^{m\cdot n-1-k}_S(S/I,S)_{-m\cdot n + j}\rm{ for all }k,\]
 in order to prove the desired vanishing statement it is enough to check by Lemma~\ref{lem:Z_p^d} and (\ref{eq:ExtS/IX}) that
 \begin{equation}\label{eq:vanishing}
 \Ext^{m\cdot n-1-k}_S(J_{\z,l},S)_j =  0\rm{ for }j>-m\cdot n\rm{ and }k < m+n-2.
 \end{equation}
 Suppose that $\ll\in\bb{Z}^n_{\dom}$ is a dominant weight of size $|\ll| = j > -m\cdot n$ and that $S_{\ll(s)}\bb{C}^m\oo S_{\ll}\bb{C}^n$ appears as a subrepresentation of $\Ext^{m\cdot n-1-k}_S(J_{\z,l},S)_j$, where we use the notation in (\ref{eq:Extj}). It follows that there exist $0\leq s\leq t_1\leq\cdots\leq t_{n-l}\leq l$ such that $\ll\in W(\z,l;\t,s)$ and
 \begin{equation}\label{eq:ineq-k}
 m\cdot n - 1 - k = m\cdot n - l^2 - s\cdot(m-n) - 2\cdot \left(\sum_{i=1}^{n-l}t_i \right)
 \end{equation}
 If $s=0$, then the condition (\ref{eq:lam-in-W}) implies that $\ll_1 \leq -m$ and thus $|\ll| \leq -m\cdot n$, a contradiction. It follows that $s\geq 1$, and therefore $l\geq 1$ and $t_i\geq 1$ for all $i=1,\cdots,n-l$. We get
 \[m\cdot n - l^2 - s\cdot(m-n) - 2\cdot \left(\sum_{i=1}^{n-l}t_i \right) \leq m\cdot n - l^2 - (m-n) - 2\cdot(n-l) \leq m\cdot n - (m+n-1).\]
 Using (\ref{eq:ineq-k}) we find that $k+1\geq m+n-1$, i.e. $k\geq m+n-2$, which proves (\ref{eq:vanishing}).
 
 To prove the last statement of the theorem we note that $Y_{\operatorname{red}}$ is non-singular only if its defining ideal is $I_2$ (i.e. if it is isomorphic to the Segre product $\bb{P}^{m-1}\times\bb{P}^{n-1}$), in which case $\dim(Y) = m+n-2$. If the defining ideal of $Y_{\operatorname{red}}$ is $I_p$ with $p\geq 3$, then $\codim(\Sing(Y_{\operatorname{red}})) = m+n-2p+3<m+n-2$.
\end{proof}

\section{An Example}\label{sec:example}

We conclude our paper with a concrete calculation which recovers the table in \cite[Example~5.4]{BBLSZ}. We let $m=n=3$, and consider the ideal $I_2$ of $2\times 2$ minors of the generic $3\times 3$ matrix. We are interested in computing the modules $\Ext^9(S/I_2^d,S)$ for $d\geq 1$. It follows from (\ref{eq:ExtS/IX}) and Lemma~\ref{lem:Z_p^d} that
\[\Ext^9_S(S/I_2^d,S) = \bigoplus_{(\z,l)\in\Z_2^d} \Ext^9_S(J_{\z,l},S),\]
Using Theorem~\ref{thm:ExtJzl}, we see that $\Ext^9_S(J_{\z,l},S)=0$ unless $l=0$, in which case
\begin{equation}\label{eq:Ext9Jz0}
\Ext^9_S(J_{\z,0},S) = \bigoplus_{\ll\in W(\z,0;\ul{0},0)}S_{\ll}\bb{C}^3\oo S_{\ll}\bb{C}^3,
\end{equation}
where $W(\z,0;\ul{0},0)=\{\ll\}$ is a singleton, consisting of the weight $\ll = (-3-z_3,-3-z_2,-3-z_1)$, and the term $S_{\ll}\bb{C}^3\oo S_{\ll}\bb{C}^3$ appears in degree $|\ll|=-|\z|-9$. The following table records the partitions $\z\in\P_3$ for which $(\z,0)\in\Z_2^d$ when $d\leq 7$: by (\ref{eq:def-Z_p^d}) we must have $|\z|\leq 2d-1$ and $z_2+z_3\geq d$. We use $||$ to separate the various partitions $\z$ according to their size.

\begin{center}
\setlength{\extrarowheight}{2pt}
\begin{tabular}{c|c}
$d$ & $\z$ \\
\hline
1 & \\
\hline
2 & $(1^3)$ \\
\hline
3 & $(2^2,1)$ \\
\hline
4 & $(2^3)\ ||\ (3,2^2),\ (3^2,1)$ \\
\hline
5 & $(3^2,2)\ ||\ (3^3),\ (4^2,1),\ (4,3,2)$ \\
\hline
6 & $(3^3)\ ||\ (4^2,2),\ (4,3^2)\ ||\ (4^2,3),\ (5^2,1),\ (5,4,2),\ (5,3^2)$ \\
\hline
7 &  $(4^2,3)\ ||\ (4^3),\ (5^2,2),\ (5,4,3) \ ||\ (5^2,3),\ (5,4^2),\ (6^2,1),\ (6,5,2),\ (6,4,3)$ \\
\end{tabular}
\end{center}

We note that $\dim S_{\ll}\bb{C}^3 = \dim S_{\z}\bb{C}^3$ when $\ll = (-3-z_3,-3-z_2,-3-z_1)$, so writing $S_{\z}$ instead of $S_{\z}\bb{C}^3$ we get for instance
\[
\begin{aligned}
\dim\left(\Ext^9_S(S/I_2^7,S)_{-22}\right) =\ & (\dim S_{5^2,3})^2 + (\dim S_{5,4^2})^2 + (\dim S_{6^2,1})^2 + (\dim S_{6,5,2})^2 + (\dim S_{6,4,3})^2 \\ 
=\ & 6^2 + 3^2 + 21^2 + 24^2 + 15^2 = 1287.
\end{aligned}
\] 
Since the partition $\z=(4^2,3)$ is the only one with the property that $(\z,0) \in \Z_2^6\cap\Z_2^7$, we get from (\ref{eq:imExt}) that
\[\rm{Im}\left(\Ext^9_S(S/I_2^6,S) \lra \Ext^9_S(S/I_2^7,S)\right) = \Ext^9_S(J_{(4^2,3),0},S) \overset{(\ref{eq:Ext9Jz0})}{=} S_{(-6,-7,-7)}\bb{C}^3 \oo S_{(-6,-7,-7)}\bb{C}^3,\]
which coincides with the degree $-20$ component of $\Ext^9_S(S/I_2^7,S)$ and is $9$-dimensional.

\section*{Acknowledgments} 
We would like to thank Anurag Singh for sharing the preprint \cite{BBLSZ} and for the interesting discussions that started this project. We are also grateful to Michael Perlman for reading the manuscript and for pointing out an omission in an earlier proof of Proposition~\ref{prop:Rlpnd-dlarge}. Experiments with the computer algebra software Macaulay2 \cite{M2} have provided numerous valuable insights. The author acknowledges the support of the Alfred P. Sloan Foundation, and of the National Science Foundation Grant No.~1600765.


	\begin{bibdiv}
		\begin{biblist}

\bib{akin-buchsbaum-weyman}{article}{
   author={Akin, Kaan},
   author={Buchsbaum, David A.},
   author={Weyman, Jerzy},
   title={Resolutions of determinantal ideals: the submaximal minors},
   journal={Adv. in Math.},
   volume={39},
   date={1981},
   number={1},
   pages={1--30},
   issn={0001-8708},
   review={\MR{605350}},
   doi={10.1016/0001-8708(81)90055-4},
}

\bib{arapura-jaffe}{article}{
   author={Arapura, Donu},
   author={Jaffe, David B.},
   title={On Kodaira vanishing for singular varieties},
   journal={Proc. Amer. Math. Soc.},
   volume={105},
   date={1989},
   number={4},
   pages={911--916},
   issn={0002-9939},
   review={\MR{952313}},
   doi={10.2307/2047052},
}

\bib{BBLSZ}{article}{
   author={Bhatt, Bhargav},
   author={Blickle, Manuel},
   author={Lyubeznik, Gennady},
   author={Singh, Anurag},
   author={Zhang, Wenliang},
   title={Stabilization of the cohomology of thickenings},
   journal = {arXiv},
   number = {1605.09492},
   date={2016}
}

\bib{bruns-conca-varbaro}{article}{
   author={Bruns, Winfried},
   author={Conca, Aldo},
   author={Varbaro, Matteo},
   title={Maximal minors and linear powers},
   journal={J. Reine Angew. Math.},
   volume={702},
   date={2015},
   pages={41--53},
   issn={0075-4102},
   review={\MR{3341465}},
   doi={10.1515/crelle-2013-0026},
}

\bib{bruns-herzog}{book}{
   author={Bruns, Winfried},
   author={Herzog, J{\"u}rgen},
   title={Cohen-Macaulay rings},
   series={Cambridge Studies in Advanced Mathematics},
   volume={39},
   publisher={Cambridge University Press, Cambridge},
   date={1993},
   pages={xii+403},
   isbn={0-521-41068-1},
   review={\MR{1251956}},
}

\bib{chardin}{article}{
   author={Chardin, Marc},
   title={Powers of ideals and the cohomology of stalks and fibers of
   morphisms},
   journal={Algebra Number Theory},
   volume={7},
   date={2013},
   number={1},
   pages={1--18},
   issn={1937-0652},
   review={\MR{3037888}},
   doi={10.2140/ant.2013.7.1},
}

\bib{cutkosky}{article}{
   author={Cutkosky, S. Dale},
   title={Irrational asymptotic behaviour of Castelnuovo-Mumford regularity},
   journal={J. Reine Angew. Math.},
   volume={522},
   date={2000},
   pages={93--103},
   issn={0075-4102},
   review={\MR{1758577}},
   doi={10.1515/crll.2000.043},
}

\bib{cutkosky-ein-lazarsfeld}{article}{
   author={Cutkosky, Steven Dale},
   author={Ein, Lawrence},
   author={Lazarsfeld, Robert},
   title={Positivity and complexity of ideal sheaves},
   journal={Math. Ann.},
   volume={321},
   date={2001},
   number={2},
   pages={213--234},
   issn={0025-5831},
   review={\MR{1866486}},
   doi={10.1007/s002080100220},
}

\bib{cutkosky-herzog-trung}{article}{
   author={Cutkosky, S. Dale},
   author={Herzog, J{\"u}rgen},
   author={Trung, Ng{\^o} Vi{\^e}t},
   title={Asymptotic behaviour of the Castelnuovo-Mumford regularity},
   journal={Compositio Math.},
   volume={118},
   date={1999},
   number={3},
   pages={243--261},
   issn={0010-437X},
   review={\MR{1711319}},
   doi={10.1023/A:1001559912258},
}

\bib{deconcini-eisenbud-procesi}{article}{
   author={De Concini, C.},
   author={Eisenbud, David},
   author={Procesi, C.},
   title={Young diagrams and determinantal varieties},
   journal={Invent. Math.},
   volume={56},
   date={1980},
   number={2},
   pages={129--165},
   issn={0020-9910},
   review={\MR{558865}},
   doi={10.1007/BF01392548},
}

\bib{eisenbud-CA}{book}{
   author={Eisenbud, David},
   title={Commutative algebra},
   series={Graduate Texts in Mathematics},
   volume={150},
   note={With a view toward algebraic geometry},
   publisher={Springer-Verlag},
   place={New York},
   date={1995},
   pages={xvi+785},
   isbn={0-387-94268-8},
   isbn={0-387-94269-6},
   review={\MR{1322960 (97a:13001)}},
   doi={10.1007/978-1-4612-5350-1},
}

\bib{eisenbud-harris}{article}{
   author={Eisenbud, David},
   author={Harris, Joe},
   title={Powers of ideals and fibers of morphisms},
   journal={Math. Res. Lett.},
   volume={17},
   date={2010},
   number={2},
   pages={267--273},
   issn={1073-2780},
   review={\MR{2644374}},
   doi={10.4310/MRL.2010.v17.n2.a6},
}

\bib{EMS}{article}{
   author={Eisenbud, David},
   author={Musta{\c t}{\uuu a}, Mircea},
   author={Stillman, Mike},
   title={Cohomology on toric varieties and local cohomology with monomial
   supports},
   note={Symbolic computation in algebra, analysis, and geometry (Berkeley,
   CA, 1998)},
   journal={J. Symbolic Comput.},
   volume={29},
   date={2000},
   number={4-5},
   pages={583--600},
   issn={0747-7171},
   review={\MR{1769656}},
   doi={10.1006/jsco.1999.0326},
}

\bib{eisenbud-ulrich}{article}{
   author={Eisenbud, David},
   author={Ulrich, Bernd},
   title={Notes on regularity stabilization},
   journal={Proc. Amer. Math. Soc.},
   volume={140},
   date={2012},
   number={4},
   pages={1221--1232},
   issn={0002-9939},
   review={\MR{2869107}},
   doi={10.1090/S0002-9939-2011-11270-X},
}

\bib{M2}{article}{
          author = {Grayson, Daniel R.},
          author = {Stillman, Michael E.},
          title = {Macaulay 2, a software system for research
                   in algebraic geometry},
          journal = {Available at \url{http://www.math.uiuc.edu/Macaulay2/}}
        }

\bib{kodiyalam}{article}{
   author={Kodiyalam, Vijay},
   title={Asymptotic behaviour of Castelnuovo-Mumford regularity},
   journal={Proc. Amer. Math. Soc.},
   volume={128},
   date={2000},
   number={2},
   pages={407--411},
   issn={0002-9939},
   review={\MR{1621961}},
   doi={10.1090/S0002-9939-99-05020-0},
}

\bib{niu}{article}{
   author={Niu, Wenbo},
   title={Some results on asymptotic regularity of ideal sheaves},
   journal={J. Algebra},
   volume={377},
   date={2013},
   pages={157--172},
   issn={0021-8693},
   review={\MR{3008900}},
   doi={10.1016/j.jalgebra.2012.11.042},
}

\bib{raicu-weyman}{article}{
   author={Raicu, Claudiu},
   author={Weyman, Jerzy},
   title={Local cohomology with support in generic determinantal ideals},
   journal={Algebra \& Number Theory},
   volume={8},
   date={2014},
   number={5},
   pages={1231--1257},
   issn={1937-0652},
   review={\MR{3263142}},
   doi={10.2140/ant.2014.8.1231},
}

		\end{biblist}
	\end{bibdiv}

\end{document}